\chardef\bslash=`\\ % p. 424, TeXbook
\newtheorem{thm}{Theorem}[section]
\newtheorem{cor}[thm]{Corollary}
\newtheorem{lem}[thm]{Lemma}
\newtheorem{prop}[thm]{Proposition}
\theoremstyle{definition}
\newtheorem{defn}[thm]{Definition}
\theoremstyle{remark}
\newtheorem{rem}[thm]{Remark}
\newcommand{\thmref}[1]{Theorem~\ref{#1}}
\newcommand{\lemref}[1]{Lemma~\ref{#1}}
\newcommand{\propref}[1]{Proposition~\ref{#1}}
\newcommand{\corref}[1]{Corollary~\ref{#1}}
\newcommand{\defnref}[1]{Definition~\ref{#1}}
\newcommand{\B}{\mathcal{B}}
\newcommand{\F}{\mathcal{F}}
\newcommand{\G}{\mathcal{G}}
\newcommand{\m}{\bf m}
\newcommand{\eval}[2][\right]{\relax
  \ifx#1\right\relax \left.\fi#2#1\rvert}
\def\Z{{\mathbf{Z}}}
\def\t{{\mathbf{T}}}
\def\defequal{\stackrel{\mathrm{def}}{=}}
\def\<{\langle}
\def\>{\rangle}
\def\tfae{the following conditions are equivalent:}
\title{Cartan subalgebras in W$^*$-algebras}
\author{Jean Renault}
\address{Institut Denis Poisson (UMR 7013)\\
  Universit\'e d'Orl\'eans et CNRS \\ 45067
  Orl\'eans Cedex 2, FRANCE}
\email{jean.renault@univ-orleans.fr}
\keywords{Boolean algebras, Inverse semigroups, Groupoids, Cartan subalgebras.}
\subjclass{Primary 37D35; Secondary 46L85.}
\begin{document}

\vskip5mm
\begin{abstract} This article presents a proof of the Feldman-Moore theorem on Cartan subalgebras in W*-algebras based on the non-commutative Stone equivalence between Boolean inverse semigroups and Boolean groupoids. The proof is decomposed into two parts. The first part writes the W*-algebra as the W*-algebra of the Weyl twist of the Cartan subalgebra. The second part is an extension to separable measure inverse semigroups of the realization of a separable measure algebra by a standard measure space.
\end{abstract}

\maketitle
\markboth{Renault}
{Cartan}

\renewcommand{\sectionmark}[1]{}

\section*{Introduction}

In the theory of operator algebras, a Cartan subalgebra is a maximal abelian self-adjoint subalgebra with additional properties (\defnref{W*-Cartan}). The main point of  \cite{kum:diagonals, ren:cartan} is that there is a canonical twisted groupoid attached to a Cartan pair of C*-algebras and that the Cartan pair constructed from this twisted groupoid is isomorphic to the initial Cartan pair. Not surprisingly, a similar result holds for Cartan pairs of W*-algebras. In fact, the study of W*-Cartan pairs \cite{fm:relations II} preceded  and motivated the study of C*-Cartan pairs; moreover, it is easier, because it suffices to consider the partial isometries which normalize the subalgebra rather than arbitrary normalizers. Since the partial isometry normalizers form an inverse semigroup, it is natural to appeal to the theory of inverse semigroups to study the Cartan pair. This is the road followed by A.~Donsig, A.~Fuller and D.~Pitts in \cite{dfp:cartan}. The close kinship of inverse semigroups and groupoids has been known for a long time and under various forms (see in particular \cite{ren:approach, pat:gpd, ste:isga}). One of its most basic expressions is that the bisections of a groupoid form an inverse semigroup. One can ask for a converse as in \cite[Remark III.2.4]{ren:approach}. This question is studied in \cite{pat:pb} and \cite[Section 4.3]{pat:gpd}. It turns out that it has a very satisfactory answer, given by M.~Lawson in \cite{law:stone,law:stone2,law:stone3} and in his work with D.~Lenz \cite{ll:pseudogroups}, for a particular class of inverse semigroups, namely the Boolean inverse semigroups (\defnref{boolean isg}). Indeed the category of Boolean inverse semigroups is equivalent to the category of Boolean groupoids (\thmref{equivalence}), where a Boolean groupoid is an \'etale groupoid whose unit space is Boolean (these groupoids are called ample in \cite{pat:gpd}). Lawson's beautiful idea is that Boolean inverse semigroups generalize Boolean algebras,  Boolean groupoids generalize Boolean spaces and that Stone's duality can be extended in this new framework. However, in order to establish a non-commutative Stone duality, Lawson and Lenz had to restrict the class of objects or arrows. The problem is the right definition of the arrows in the category of groupoids. Motivated by the theory of C*-algebras, groupoid correspondences (see \cite[Section 2]{ren:rieffel}) are good candidates but invertible correspondences are groupoid equivalences and not usual isomorphisms. In the present case of Boolean groupoids, the suitable arrows are the multiplier actions, which are a particular case of groupoid correspondences. We give their definition in the Appendix A and refer the reader to the recent preprint \cite{tay:functoriality} for further information on this notion which appears there under the name of actor. The functors Bis and Germ establishing the equivalence of category are covariant and give an equivalence rather than a dual equivalence. The results of Appendix A are essentially contained in \cite{bem:isa}. Since a W*-Cartan pair gives a twisted Boolean inverse semigroup, the equivalence has to be extended to the twisted categories. The main result of Section 2 (\thmref{abstract}) is a simple transposition of \cite[Theorem 5.6]{ren:cartan} (see also \cite[Theorem 1.2]{raa:cartan}). Its final step is even easier since it is the identification of two GNS representations. Comparing with the inverse semigroup road followed in \cite{dfp:cartan}, one can see the advantage of introducing groupoids and their convolution algebras.

Stone's duality can be split into three levels: firstly Boolean algebras and Boolean spaces, secondly complete Boolean algebras and Stonian spaces, and thirdly measure algebras and measure spaces. The same holds for the non-commutative Stone equivalence; moreover each level corresponds to a class of operator algebras: firstly Boolean inverse semigroups, Boolean groupoids and C*-algebras, secondly complete Boolean inverse semigroups, Stonian groupoids and AW*-algebras, and thirdly measure inverse semigroups, measure Boolean groupoids and W*-algebras. We are interested in the third level. It is different: strictly speaking, the dual of a measure algebra is a hyperstonian space (\cite[D\'efinition 3]{dix:stone}). However the natural objects of measure theory are standard measure spaces and not hyperstonian spaces. When the measure algebra is separable (see for example \cite[Section 40]{hal:measure}), it can be realized as the measure algebra of a standard measure space. We show that a similar result holds for measure Boolean inverse semigroups (\thmref{concrete}): if the W*-algebra $W^*(G)$ of a measure Stonian groupoid $G$ has a separable predual, then $G$ contains (in the sense of the category of Boolean groupoids) a measure Boolean groupoid $H$ which is second countable such that the pairs $(W^*(G),L^\infty(G^{(0)}))$ and $(W^*(H),L^\infty(H^{(0)}))$ are isomorphic, where we have omitted the measures and the possible twists for the sake of legibility.  This step is also a crucial element of the original proof of Feldman and Moore (\cite[paragraph before Proposition 3.2]{fm:relations II}). When we apply this result to the twisted measure Stonian groupoid associated with a Cartan pair, we recover the Feldman-Moore theorem since in this case, $H$ admits a reduction to a Borel set of full measure which is an equivalence relation with countable classes.
\vskip2mm
Here is a brief summary of the content of the paper. Besides this introduction, the paper has three sections. The first section gives the definitions of (twisted) Boolean inverse semigroups and of (twisted) Boolean groupoids, presents briefly the non-commutative Stone equivalence, recalls the construction of the convolution algebra of a twisted \'etale groupoid, and shows that the full C*-algebra of a twisted Boolean groupoid is universal for the representations of its twisted ample semigroup (\thmref{universal}). The second section recalls the definition of a Cartan subalgebra both in the C*-algebra and in the W*-algebra contexts. While the study of \cite{ren:cartan} is restricted to Hausdorff groupoids, it is completed here by some results valid for non-Hausdorff Boolean groupoids. However, \propref{char2}, inspired by \cite[Theorem 3.1]{abcclmr} and \corref{char} give a severe limitation to a direct extension of \cite[Theorem 5.9]{ren:cartan} to the non-Hausdorff case. The main result (\thmref{abstract}) of the section is the realization of a W*-Cartan pair as the W*-Cartan pair of its twist. The third section studies briefly measure inverse semigroups and shows that, under a separability assumption, a Stonian measure groupoid can be replaced by a second countable Boolean measure groupoid without changing its W*-algebra (\thmref{concrete}). The Feldman-Moore theorem is given as a corollary (\corref{fm}). Two appendices complete the paper for the convenience of the reader. Appendix A establishes the equivalence of the category of Boolean inverse semigroup and of the category of Boolean groupoids, first in the untwisted case (\thmref{equivalence}) and then its twisted version (\thmref{twisted equivalence}). These constructions and properties, which are well known in the general setting of \'etale groupoids and inverse semigroups (see for example \cite{exe:isg, be:fell, bem:isa}), take a quite satisfying form in the Boolean case. Appendix B establishes the bijection between the representations of a twisted Boolean inverse semigroup $(\G,{\mathcal S})$ and the representations of the convolution algebra $C_c(G,\Sigma)$ of its twisted groupoid of germs (\thmref{twisted rep}). This is a straightforward result which exists in the literature under various forms (often with extraneous assumptions).
\vskip2mm
The notation about groupoids and inverse semigroups follows essentially that of \cite{ren:approach}. The domain map is denoted by $d$ and the range map by $r$. Since inverse semigroups appear here either as abstract semigroups, or semigroups of partial isometries, or semigroups of bisections, their elements may be written differently: $s,t$ for abstract elements, $u,v$ for partial isometries, and $S,T$ for bisections.
\vskip2mm
I thank Professor Takesaki for suggesting a few years ago that the techniques of \cite{ren:cartan} to study C*-Cartan subalgebras should work as well for W*-Cartan subalgebras, Mark Lawson for sharing his work on non-commutative Stone duality and help, Fred Wehrung and Henrik Kreidler for fruitful discussions and Davit Pitts for useful comments.

\section{Twisted Boolean inverse semigroups and groupoids }

\subsection{Boolean inverse semigroups and Boolean groupoids} Definitions and references about inverse semigroups are recalled in  Appendix A. We denote by $\G^{(0)}$ the set of idempotents of the inverse semigroup $\G$. It is a meet-semilattice for the order $e\le f$ if and only if $ef=e$. One extends this order to $\G$ by defining $s\le t$ if and only if there exists $e\in\G^{(0)}$ such that $s=te$. Assuming that $\G$ has a zero element, we say that a family $(s_i)$ of elements of $\G$ is orthogonal if for all $i\not=j$, $s^*_is_j=s_is^*_j=0$. The following notion, which plays a crucial role in the present work, is anticipated in \cite[Remark III.2.4]{ren:approach}. The related notion of additive inverse semigroup is predominant in \cite{pat:gpd}. The terminology of Boolean inverse semigroup is introduced in \cite{law:stone}, where it is shown that Boolean inverse semigroups are a natural and far-reaching generalization of Boolean algebras. Although various definitions of a Boolean inverse semigroup appear in the literature (see \cite[Section 3.1]{weh:boolean}), we stick to our original notion because it is the most appropriate in this work. 

\begin{defn}\label{boolean isg}  An inverse semigroup $\G$ is called {\it Boolean} if
\begin{enumerate}
 \item $\G^{(0)}$ is a Boolean algebra;
 \item every orthogonal pair $(s,t)$ in $\G$ has a join $s\vee t$ in $\G$.
\end{enumerate}
 \end{defn}
 
 Although we are mostly concerned with the case when $\G^{(0)}$ has a unit, we do not make this assumption here. Thus $\G^{(0)}$ is a generalized Boolean algebra or a Boolean ring in the usual terminology. We choose this definition to match the corresponding definition for C*-algebras.
 
 \begin{defn}
A topological space is called {\it Boolean} if it is locally compact Hausdorff and totally disconnected.
\end{defn}

\begin{defn}\label{boolean gpd}  A {\it Boolean groupoid} is an \'etale groupoid $G$ whose unit space $G^{(0)}$ is a Boolean space.  
\end{defn}

A Boolean groupoid is called an ample groupoid by Paterson in \cite{pat:gpd}. We shall not use his terminology. 
\vskip 2mm
The non-commutative Stone equivalence, proved in Appendix A, is the equivalence of the category of Boolean inverse semigroups, where the arrows are the Boolean inverse semigroup morphisms and the category of Boolean groupoids, where the arrows are the multiplier actions with proper anchor. One associates to a Boolean inverse semigroup $\G$ its groupoid of germs ${\rm Germ}(\G)$ and to a Boolean groupoid $G$ the inverse semigroup of its compact open bisections ${\rm Bis}(G)$. We recall these constructions. The next proposition is proved in Appendix A. The terminology follows \cite{kri:dimension} and \cite[Definition I.2.10]{ren:approach}.

\vskip2mm
\noindent
{\bf Proposition A.1.} {\it Endowed with the set-theoretical product, the set of compact open bisections of a Boolean groupoid $G$ is a Boolean inverse semigroup, called the {\em ample inverse semigroup} of $G$ and denoted by ${\rm Bis}(G)$}.
\vskip2mm

Whenever an inverse semigroup $\G$ acts on a topological space $X$ by partial homeomorphisms, one can define the groupoid of germs $G={\rm Germ}(\G,X)$ of this action (see Paterson \cite[Theorem 3.3.2]{pat:gpd} or Exel \cite[Proposition 4.17]{exe:isg}). Explicitly $G=\G*X/\sim$, where $\G*X$ is the set of pairs $(s,x)$ where $s\in\G$ and $x\in{\rm dom}(s)$ and $(s,x)\sim (t,y)$ if and only if $x=y$ and there exists $e\in \G^{(0)}$ having $x$ in its domain and such that $se=te$. We denote by $[s,x]$ the equivalence class of $(s,x)$. The operations $[s,\varphi_t(x)][t,x]=[st,x]$ and $[s,x]^{-1}=[s^*,\varphi_s(x)]$ turn $G$ into a groupoid. The topology of germs, with basic open sets $S(s,U)=\{[s,x], x\in U\cap{\rm dom}(s)\}$, where $s\in\G$ and $U$ is an open set of $X$, turns it into an \'etale groupoid. If $X$ is a Boolean space, $G$ is a Boolean groupoid. We apply this construction to the action of $\G$ on $\G^{(0)}$ by conjugation: $E\mapsto SES^*$. If $\G^{(0)}$ is the Boolean algebra of compact open sets of the Boolean space $X$, it gives an action of $\G$ on $X$ by partial homeomorphism $\varphi_S:{\rm dom}(S)\to{\rm ran}(S)$.

\begin{defn}\label{germ}
 The {\it groupoid of germs} ${\rm Germ}(\G)$ of a Boolean inverse semigroup $\G$ is defined as the groupoid of its action on the dual space $X$ of the Boolean algebra $\G^{(0)}$.
\end{defn}

\begin{rem}
 Our groupoid of germs ${\rm Germ}(\G)$ is not Paterson's universal groupoid of \cite[Section 4.3]{pat:gpd}, which is the groupoid of germs of the action of $G$ on the spectrum $\Omega$ of the meet-semilattice $\G^{(0)}$. More precisely ${\rm Germ}(\G)$ is the reduction of the universal groupoid of $\G$ to $X$, which is a closed invariant subset of $\Omega$ (cf. \cite[Proposition 4.18]{ste:isga}).
\end{rem}

 \vskip4mm
 \subsection{Twists}
 
The non-commutative Stone equivalence extends to twisted Boolean inverse semigroups and twisted Boolean groupoids. The details can be found in  Appendix A.

 \begin{defn}\label{gpd twist}\cite[\S 2]{kum:diagonals} A {\it twist} over a groupoid $G$ is a central groupoid extension
$$X\times\t\stackrel{i}{\rightarrowtail}\Sigma\stackrel{p}{\twoheadrightarrow}  G$$ 
where $\t$ is the group of complex numbers of modulus one (by definition, the groupoids of the extension have the common unit space $X$). Then, we say that $(G,\Sigma)$ is a {\it twisted groupoid}.
\end{defn} 

When $G$ is a locally compact groupoid, we implicitly assume that $\Sigma$ is a locally compact groupoid, that $X\times\t$ is a locally closed subset of $\Sigma$ and that $p$ is continuous and open. We then say that $(G,\Sigma)$ is a locally compact twisted groupoid. If $G$ is Boolean, we say that $(G,\Sigma)$ is a Boolean twisted groupoid. The notion of open bisection is not suitable for $\Sigma$ which is not \'etale, We replace it by the notion of continuous bisection (\defnref{continuous bisection}). It can be checked that the two notions agree when $G$ is \'etale. The set of compact continuous bisections of $\Sigma$ is a Boolean inverse semigroup, which we still call the {\it ample semigroup} of $\Sigma$ and denote by  ${\rm Bis}(\Sigma)$.
\vskip2mm

In order to define a twist over a Boolean inverse semigroup, we need to interpret the term of ``central extension''. Given a Boolean space $X$, we denote by ${\mathcal T}(X)$ the set of continuous functions $h:{\rm supp}(h)\to\t$, where ${\rm supp}(h)$ is a compact open subset of $X$. It is an inverse semigroup under multiplication, where the support of $hk$ is the intersection of the supports of $h$ and $k$. Note that ${\mathcal T}(X)$ is nothing but the ample semigroup ${\rm Bis}(X\times\t)$ of the trivial group bundle $X\times\t$. It is called the trivial Clifford inverse semigroup over $X$. Every (partial) isomorphism of the Boolean algebra $\B(X)$ of compact open subsets of $X$ has a trivial extension to a (partial) isomorphism of ${\mathcal T}(X)$.

\begin{defn}\label{isg twist} A {\it twist} over a Boolean inverse semigroup $\mathcal G$ is an inverse semigroup extension
\[{\mathcal T}\stackrel{i}{\rightarrowtail}{\mathcal S}\stackrel{p}{\twoheadrightarrow}  \G\]
where ${\mathcal T}$ is the trivial Clifford inverse semigroup over the dual space of ${\G}^{(0)}$ and the action of ${\mathcal S}$ on $\mathcal T$ by conjugation is the trivial extension of its action on  ${\G}^{(0)}$. Then, we say that $(\G,{\mathcal S})$ is a {\it twisted Boolean inverse semigroup}.
\end{defn}

The proof of the next proposition is given in Appendix A.

\vskip2mm
\noindent
\propref{gpdtwist to isgtwist} {\it Let  $(G,\Sigma)$ be a twisted Boolean groupoid. Then $({\rm Bis}(G), {\rm Bis}(\Sigma))$ is a twisted Boolean inverse semigroup.}
\vskip2mm

 In order to show that every twisted Boolean inverse semigroup $(\G,{\mathcal S})$ is isomorphic to $({\rm Bis}(G), {\rm Bis}(\Sigma))$, where $(G,\Sigma)$ is a twisted Boolean groupoid, we introduce the variant of the construction of the groupoid of germs, given in \cite[Proposition 4.12]{ren:cartan}. Let $(\G,{\mathcal S})$ be a twisted Boolean inverse semigroup. We represent $\G^{(0)}$ as the  Boolean algebra $\B(X)$ of a Boolean space $X$ and  the kernel of the extension as the trivial Clifford inverse semigroup ${\mathcal T}(X)$. We recall that  ${\mathcal S}$ acts on $X$ through the action by conjugation and denote by $\varphi_s$ the partial homeomorphism defined by $s\in{\mathcal S}$. We define on ${\mathcal S}*X=\{(s,x): s\in {\mathcal S}, x\in{\rm dom}(s)\}$ the equivalence relations
 $(s,x)\sim (t,y)$ if and only if
  $$x=y\quad{\rm and}\quad\exists\, a,b\in {\mathcal T}(X): a(x)=b(x)=1\quad{\rm and}\quad sa=tb$$
 and
 $(s,x)\approx (t,y)$ if and only if
  $$x=y\quad{\rm and}\quad\exists\, a,b\in {\mathcal T}(X): |a(x)|=|b(x)|=1\quad{\rm and}\quad sa=tb.$$
  The equivalence class of $(s,x)$ for $\sim$ [resp. for $\approx$] is denoted by $[s,x]$ [resp. by $[[s,x]]$]. Again the proof of the next proposition is in  Appendix A.
  
 \vskip2mm
 \noindent 
\propref{isgtwist to gpdtwist} {\it Let $(\G,{\mathcal S})$ be a twisted Boolean inverse semigroup. Then,
\begin{enumerate}
 \item $\Sigma={\mathcal S}*X/\sim$ has a groupoid structure with
$$[s,\varphi_t(x)][t,x]=[st,x],\quad [s,x]^{-1}=[s^*,\varphi_s(x)].$$
 \item $G={\mathcal S}*X/\approx$ has a groupoid structure with
$$[[s,\varphi_t(x)]][[t,x]]=[[st,x]],\quad [[s,x]]^{-1}=[[s^*,\varphi_s(x)]].$$
\item the map $p:\Sigma\to G$ sending $[s,x]$ to $[[s,x]]$
is a surjective groupoid homomorphism such that $p^{(0)}$ is the identity of $X$ and ${\rm Ker}(p)$ is isomorphic to $X\times\t$.
\item $(G,\Sigma)$ is a twisted Boolean groupoid.
\item $(\G,{\mathcal S})$ is isomorphic to $({\rm Bis}(G), {\rm Bis}(\Sigma))$.
\end{enumerate}}
\vskip2mm

The isomorphism associates to $s\in{\mathcal S}$ the bisection $S(s)\defequal\{[s,x], x\in{\rm dom}(s)\}$ of $\Sigma$ and to $p(s)\in\G$ the bisection $S'(s)\defequal\{[[s,x]], x\in{\rm dom}(s)\}$ of $G$.

\begin{defn}\label{twisted groupoid of germs} The twisted Boolean groupoid $(G,\Sigma)$ of the proposition is called the {\it twisted groupoid of germs} of $(\G,{\mathcal S})$.
 
\end{defn}
 
 \vskip4mm
 \subsection{Convolution algebras} The construction of the convolution algebra $C_c(G,\Sigma)$ of a twisted Boolean groupoid $(G,\Sigma)$ is a particular case of a construction valid for every twisted locally compact groupoid with a Haar system $\lambda$, which can be found in many places, in particular in \cite[Section 1]{ren:twisted extensions}. Since $G$ is \'etale, $\lambda^x$ and $\lambda_x$ are respectively the counting measures on $G^x=r^{-1}(x)$ and on $G_x=d^{-1}(x)$. When $G$ is Hausdorff, the elements of $C_c(G,\Sigma)$ are compactly supported continuous functions $f:\Sigma\to{\bf C}$ which are homogeneous in the sense that $f(\theta\sigma)=\overline\theta f(\sigma)$ for all $(\theta,\sigma)\in \t\times\Sigma$. When $G$ is not Hausdorff, they are finite sums of homogeneous compactly supported continuous functions functions defined on $\t$-invariant open Hausdorff subsets $U$ of $\Sigma$ and extended by 0 outside $U$. In all cases, they are finite sums of functions $f=\Delta_S(b\circ s)$, where $S$ is a compact continuous bisection of $\Sigma$, $b\in C_c(G^{(0)})$ and $\Delta_S(\sigma)=\overline\theta$ if $\sigma=\theta(Ss(\sigma))$, where $\theta\in\t$ and 0 if $\sigma$ does not belong to $\t S$. The convolution product and the involution are respectively given by
  \[f*g(\sigma)=\int_G f(\tau) g(\tau^{-1}\sigma) d\lambda^{r(\sigma)}(\tau'),\qquad
f^*(\sigma)=\overline{f(\sigma^{-1})}\]
where $\tau'=p(\tau)$ is the image in $G$ of $\tau\in\Sigma$. We warn the reader of an abuse of notation in the integral: the function $\tau\mapsto f(\tau) g(\tau^{-1}\sigma)$ should be replaced by the function it defines on the quotient $G=\Sigma/\t$. We can view $b\in C_c(G^{(0)})$ either as a multiplier of $C_c(G,\Sigma)$ by defining $fb(\sigma)=f(\sigma) b(d(\sigma))$ or as an element of $C_c(G,\Sigma)$ by defining $\tilde b(x,\theta)=\overline\theta b(x)$ and $\tilde b(\sigma)=0$ if $\sigma$ does not belong to $X\times\t$. A similar construction defines the Steinberg algebra $A_R(G,\Sigma)$ of $(G,\Sigma)$, where $R$ is a ring (see \cite[Section 3]{ste:isga} and \cite{acclmr, accclmrss} for the twisted version).

\begin{lem}\label{map u} Let $(G,\Sigma)$ be a twisted Boolean groupoid. For $S$ and $T$ in ${\rm Bis}(\Sigma))$, we have $\Delta_S*\Delta_T=\Delta_{ST}$, $\Delta_S^*=\Delta_{S^{-1}}$. If $S$ belongs to ${\mathcal T}(G^{(0)})$ and is given by a function $b$, then $\Delta_S=\tilde b$ as defined above.
\end{lem}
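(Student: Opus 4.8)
The plan is to read the three identities off the pointwise description of $\Delta_S$, the only genuine work being the bookkeeping of the $\t$-phases. First I would record the basic feature of $\Delta_S$: since $S$ is a bisection and $\ker p=G^{(0)}\times\t$ acts freely, the restriction of $p$ to $S$ is injective, so every $\t$-orbit meets $S$ in at most one point. Hence for $\sigma\in\t S$ there is a unique $\sigma_0\in S$ and a unique $\theta\in\t$ with $\sigma=\theta\sigma_0$, which is exactly the element $Ss(\sigma)$ of the definition; this makes $\Delta_S$ well defined, supported on the compact $\t$-saturated set $\t S$ with value $\overline\theta$ there, and homogeneous, $\Delta_S(\theta'\sigma)=\overline{\theta'}\,\Delta_S(\sigma)$. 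I would also note that, writing $S'=p(S)$ for the image bisection in $G$, one has $\Delta_S(\sigma)\ne 0$ iff $p(\sigma)\in S'$.

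For the multiplicativity $\Delta_S*\Delta_T=\Delta_{ST}$ I would evaluate at $\sigma\in\Sigma$. Since $G$ is \'etale, $\lambda^{r(\sigma)}$ is counting measure on $G^{r(\sigma)}$, so the convolution is $\sum_{\tau'\in G^{r(\sigma)}}\Delta_S(\tau)\Delta_T(\tau^{-1}\sigma)$ over lifts $\tau\in\Sigma$ of $\tau'$. The first step is to check that the summand descends to $G$: replacing $\tau$ by $\theta\tau$ multiplies $\Delta_S(\tau)$ by $\overline\theta$ and, since $(\theta\tau)^{-1}\sigma=\overline\theta\,\tau^{-1}\sigma$ by centrality, multiplies $\Delta_T(\tau^{-1}\sigma)$ by $\theta$, leaving the product unchanged. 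Next, a summand is nonzero only when $p(\tau)=\tau'\in S'$ and $p(\tau^{-1}\sigma)=\tau'^{-1}p(\sigma)\in T'$, which forces $p(\sigma)\in S'T'=p(ST)$. If $p(\sigma)\notin S'T'$ both sides vanish; if $p(\sigma)\in S'T'$, then, $S'$ and $T'$ being bisections of $G$, there is exactly one factorization $p(\sigma)=\alpha\beta$ with $\alpha\in S'$, $\beta\in T'$, hence exactly one contributing index $\tau'=\alpha$. Choosing the lift $\tau\in S$ gives $\Delta_S(\tau)=1$, and writing $\tau^{-1}\sigma=\theta\,t_0$ with $t_0\in T$ gives $\Delta_T(\tau^{-1}\sigma)=\overline\theta$, so the sum is $\overline\theta$. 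Finally $\sigma=\tau(\tau^{-1}\sigma)=\theta(\tau t_0)$ with $\tau t_0\in ST$, so $\Delta_{ST}(\sigma)=\overline\theta$ as well, closing the case.

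The involution is a direct computation: $\Delta_S^*(\sigma)=\overline{\Delta_S(\sigma^{-1})}$, and since $(\theta\sigma_0)^{-1}=\overline\theta\,\sigma_0^{-1}$ one has $\sigma\in\t S^{-1}$ iff $\sigma^{-1}\in\t S$, with $\sigma=\theta\sigma_0^{-1}$ giving $\sigma^{-1}=\overline\theta\,\sigma_0$; thus $\Delta_S(\sigma^{-1})=\theta$ and $\Delta_S^*(\sigma)=\overline\theta=\Delta_{S^{-1}}(\sigma)$, both sides vanishing off $\t S^{-1}$. For the last statement, an element $S\in\mathcal T(G^{(0)})$ given by $b\colon U\to\t$ is the bisection $\{i(x,b(x)):x\in U\}$ of $G^{(0)}\times\t$, so $\t S=U\times\t$; for $\sigma=i(x,\mu)$ with $x\in U$ one writes $\sigma=\theta\,i(x,b(x))$ with $\theta=\mu\overline{b(x)}$, whence $\Delta_S(\sigma)=\overline\theta=\overline\mu\,b(x)=\tilde b(\sigma)$, and both functions vanish outside $U\times\t$. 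I expect the only delicate point to be the phase bookkeeping in the multiplicativity step — both the lift-independence of the summand and the matching of the single surviving phase $\overline\theta$ with the value of $\Delta_{ST}$ — everything else being an unwinding of the definitions; the non-Hausdorff case needs no separate treatment, since all the identities are verified pointwise.
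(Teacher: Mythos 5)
Your proof is correct, and it fills in exactly the routine verification the paper intends: the paper states Lemma~\ref{map u} without proof, treating it as a direct computation from the definitions of $\Delta_S$, the convolution product, and $\tilde b$. Your pointwise argument handles the genuinely relevant points — the $\t$-invariance of the integrand $\tau\mapsto\Delta_S(\tau)\Delta_T(\tau^{-1}\sigma)$ so that the integral over $G$ makes sense, the unique factorization $p(\sigma)=\alpha\beta$ with $\alpha\in p(S)$, $\beta\in p(T)$ coming from the bisection property, and the matching of phases — so nothing is missing.
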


Therefore  $A(G,\Sigma)$ and $C_c(G,\Sigma)$ carry an injective representation of the twisted inverse semigroup $({\rm Bis}(G), {\rm Bis}(\Sigma))$ as a $*$-semigroup of partial isometries and span the algebras. In the next lemma, $\F (X)$ is the space of bounded complex-valued functions on $X$.

\begin{lem}\label{Q} Let $(\G,{\mathcal S})$  be a twisted Boolean inverse semigroup with twisted groupoid of germs $(G,\Sigma)$. The map $Q:C_c(G,\Sigma)\to \F(G^{(0)})$ defined by $Q(f)(x)=f(x,1)$ (where we view $G^{(0)}\times\t$ as a subgroupoid of $\Sigma$) satisfies
\begin{enumerate}
\item $Q$ is linear,
\item $Q(f^*)=Q(f)^*$ for all $f\in C_c(G,\Sigma)$,
\item $Q(f*f^*)(x)=\int |f|^2 d\lambda^x$ for all $f$ in $C_c(G,\Sigma)$ and all $x$ in $G^{(0)}$,
\item  $Q(\Delta_S b)=Q(\Delta_S)b$ for all $S\in{\mathcal S}$ and all $b$ in $C_c(G^{(0)})$,
\item $f([S,x])=Q(\Delta_S^**f)(x)$ for all $(S,x)$ in ${\mathcal S}*G^{(0)}$ and all $f$ in $C_c(G,\Sigma)$ and
\item $Q(C_c(G,\Sigma))=C_c(G^{(0)})$ if and only if $G$ is Hausdorff.  
\end{enumerate}
\end{lem}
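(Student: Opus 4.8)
The plan is to verify the six properties in the order listed, treating (i)--(iv) as direct computations, (v) as a short germ calculation, and (vi) as the one genuinely substantive point. The single fact that makes (i)--(iv) work is that $(x,1)$ is a unit of $\Sigma$ lying in the central subgroupoid $\go\times\t$, so that $(x,1)^{-1}=(x,1)$ and $d(x,1)=r(x,1)=x$. Linearity (i) is immediate, as $Q$ is an evaluation. For (ii), $Q(f^*)(x)=f^*(x,1)=\overline{f((x,1)^{-1})}=\overline{f(x,1)}=\overline{Q(f)(x)}$, which is exactly $Q(f)^*(x)$ since the involution on $\F(\go)$ is complex conjugation. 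For (iii), I would expand $(f*f^*)(x,1)$ as a sum over $\tau'\in G^x$ ($\lambda^x$ being counting measure since $G$ is \'etale); since $d(\tau^{-1})=r(\tau)=x$, one has $\tau^{-1}(x,1)=\tau^{-1}$, so $f^*(\tau^{-1}(x,1))=\overline{f(\tau)}$ and the integrand becomes $|f(\tau)|^2$, which is $\t$-invariant and hence descends to $G$, giving $\int|f|^2\,d\lambda^x$. Property (iv) follows from the multiplier formula $(\Delta_S b)(\sigma)=\Delta_S(\sigma)b(d(\sigma))$ at $\sigma=(x,1)$, using $d(x,1)=x$. Throughout I must check that each integrand is homogeneous of the correct degree, which is a routine consequence of the centrality of $\t$ and the homogeneity of $f$.

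For (v) I would again evaluate at the unit $(x,1)$, use $\tau^{-1}(x,1)=\tau^{-1}$, and replace $\Delta_S^*$ by $\Delta_{S^{-1}}$ via \lemref{map u}. Since $\Delta_{S^{-1}}$ is supported on $\t S^{-1}$, in the sum over $\tau'\in G^x$ only the single germ with $\tau'\in p(S^{-1})\cap G^x$ survives. Identifying $S$ with the bisection $\{[s,y]\}$ of $\Sigma$ as in \propref{isgtwist to gpdtwist}, this germ is $\tau_0=[s^*,\varphi_s(x)]$, which exists precisely because the hypothesis $(S,x)\in\mathcal S*\go$ gives $x\in{\rm dom}(s)$; it satisfies $r(\tau_0)=x$ and $\Delta_{S^{-1}}(\tau_0)=1$, and $\tau_0^{-1}=[s,\varphi_{s^*}(\varphi_s(x))]=[s,x]=[S,x]$. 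Hence the one surviving term is $\Delta_{S^{-1}}(\tau_0)f(\tau_0^{-1})=f([S,x])$, as claimed.

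The heart of the matter is (vi), and this is where I expect the real work. For ``Hausdorff $\Rightarrow$ equality'' I would use the standard fact that an \'etale groupoid is Hausdorff if and only if $\go$ is closed in $G$. When $G$ is Hausdorff, $\go\times\t$ is clopen in $\Sigma$, so for $b\in C_c(\go)$ the function $\tilde b$ lies in $C_c(G,\Sigma)$ with $Q(\tilde b)=b$, giving $C_c(\go)\subseteq Q(C_c(G,\Sigma))$; conversely $Q(f)=f\circ\iota$ with $\iota:x\mapsto(x,1)$ a homeomorphism onto the closed subset $\go\times\{1\}$, so $Q(f)$ is continuous of compact support, yielding the reverse inclusion. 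For the contrapositive ``non-Hausdorff $\Rightarrow$ inequality'' I would exhibit an element of $Q(C_c(G,\Sigma))$ that is not continuous, so that $Q(C_c(G,\Sigma))\not\subseteq C_c(\go)$. Since $\go$ is then not closed, choose $\gamma\in\overline{\go}\setminus\go$; by \propref{isgtwist to gpdtwist} there is a compact continuous bisection $S$ of $\Sigma$ with $\gamma\in p(S)=:S'$. A net $u_i\in\go$ with $u_i\to\gamma$ eventually lies in the open set $S'$, and applying $d$ shows $u_i\to x_0:=d(\gamma)$ in the Hausdorff space $\go$, while $x_0\notin S'$ because $d|_{S'}$ is injective and $\gamma\neq x_0$.

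I would finish by checking that $|Q(\Delta_S)|=1_{S'\cap\go}$: indeed $\Delta_S(x,1)\neq0$ exactly when $(x,1)\in\t S$, i.e.\ when the element of $S$ over $x$ lies in $\go\times\t$, i.e.\ when $x\in S'\cap\go$, and there the modulus is $1$. Thus $Q(\Delta_S)$ has modulus $1$ along $(u_i)$ but vanishes at $x_0$, so it is discontinuous, and the two spaces cannot coincide. The main obstacle is organizing this last construction cleanly---producing the bisection $S$, pinning down the limit $x_0$ in the non-Hausdorff setting, and confirming that $S'\cap\go$ fails to be closed---whereas the twist enters only through the harmless remark that $|Q(\Delta_S)|$ is the indicator of $S'\cap\go$.
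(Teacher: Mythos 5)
Your proof is correct and follows essentially the same route as the paper: items (i)--(v) are the direct computations the paper dismisses as ``easy calculations,'' and for (vi) both arguments hinge on the identity $|Q(\Delta_S)|={\bf 1}_{p(S)\cap G^{(0)}}$ together with the fact that an \'etale groupoid is Hausdorff exactly when $G^{(0)}$ is closed in $G$. The only difference is organizational: you prove the forward implication in contrapositive form, exhibiting the discontinuity of $Q(\Delta_S)$ along an explicit net converging to $d(\gamma)$, whereas the paper assumes $Q(C_c(G,\Sigma))\subset C_c(G^{(0)})$ and deduces directly that each $p(S)\cap G^{(0)}$, and hence $G^{(0)}$, is closed.
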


\begin{proof}
The assertions (i) to (v) are easy calculations. Let us prove (vi). If $G$ is Hausdorff, the elements of $C_c(G,\Sigma)$ are continuous functions on $\Sigma$. Therefore, their restrictions to $G^{(0)}\times\{1\}$ are continuous. Suppose conversely that $Q(f)\in C_c(G^{(0)})$  for all $f\in C_c(G,\Sigma)$. This is true in particular for $f=\Delta_S$ where $S\in{\rm Bis}(\Sigma)$.  Then ${\bf 1}_{p(S)\cap G^{(0)}}=|Q(\Delta_S)|$ belongs to $C_c(G^{(0)})$. Since $C_c(G^{(0)})\subset C_c(G)$, $p(S)\cap G^{(0)}$ is a closed subset of $G$. This implies that $G^{(0)}$ is closed in $G$, hence $G$ is Hausdorff.
\end{proof}
\vskip 2mm
The $*$-algebra $C_c(G,\Sigma)$ can be completed with respect to various C*-norms, in particular the reduced norm and the full norm (see \cite{ren:rep, ren:twisted extensions}). According to \cite[Proposition 4.1]{ren:approach}, $|f(\sigma)|\le \|f\|_{\rm red}$ for all $f\in C_c(G,\Sigma)$ and all $\sigma\in\Sigma$. Therefore, $C^*_{\rm red}(G,\Sigma)$ embeds into the space of bounded complex-valued homogeneous functions on $\Sigma$. When we view the elements of $C^*_{\rm red}(G,\Sigma)$ as functions, it can be shown that the product of two elements is still given by the convolution product (see \cite[Proposition 2.21]{bfpr} or \cite[Corollary 5.4]{dwz} when $G$ is Hausdorff).

\begin{lem}\label{Q2} The above restriction map $Q$ can be extended to a bounded linear map from  $C^*_{\rm red}(G,\Sigma)$ into $\F (G^{(0)})$ endowed with the norm of uniform convergence such that
\begin{enumerate}
 \item $Q$ is positive;
 \item $Q$ is faithful;
 \item $Q(fb)=Q(f)b$ for all $(f,b)\in C^*_{\rm red}(G,\Sigma)\times C_0(G^{(0)})$.
 \end{enumerate}
\end{lem}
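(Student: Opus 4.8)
The plan is to extend $Q$ using the contractivity estimate recalled just before the statement, and then to read off all three properties from the single pointwise formula already computed in \lemref{Q}. First I would observe that, by \cite[Proposition 4.1]{ren:approach}, $\sup_{\sigma\in\Sigma}|f(\sigma)|\le\|f\|_{\rm red}$ for $f\in C_c(G,\Sigma)$, so that $\|Q(f)\|_\infty=\sup_x|f(x,1)|\le\|f\|_{\rm red}$. Thus $Q$ is contractive from $(C_c(G,\Sigma),\|\cdot\|_{\rm red})$ into $(\F(G^{(0)}),\|\cdot\|_\infty)$ and extends uniquely to a contraction on $C^*_{\rm red}(G,\Sigma)$. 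Since $C^*_{\rm red}(G,\Sigma)$ embeds into the bounded homogeneous functions on $\Sigma$, the extension is still given by $Q(a)(x)=a(x,1)$; moreover \lemref{Q}(ii) passes to the limit, so $Q(a^*)=Q(a)^*$ and $Q$ is $*$-linear, carrying self-adjoint elements to real-valued functions.

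The heart of the argument is the observation that, because the product of two elements of $C^*_{\rm red}(G,\Sigma)$ (viewed as functions on $\Sigma$) is still computed by the convolution product, the calculation of \lemref{Q}(iii) goes through verbatim for any $c\in C^*_{\rm red}(G,\Sigma)$. Evaluating $c^**c$ at $(x,1)$, using $c^*(\tau)=\overline{c(\tau^{-1})}$ together with the homogeneity of $c$, gives
\[
Q(c^*c)(x)=\int_{G_x}|c|^2\,d\lambda_x\ \ge\ 0 .
\]
Positivity (i) is then immediate, as every positive element is of the form $c^*c$. For faithfulness (ii), suppose $a=c^*c\ge 0$ satisfies $Q(a)=0$; then $\int_{G_x}|c|^2\,d\lambda_x=0$ for every $x$, and since $\lambda_x$ is the counting measure on $G_x$ this forces $c$ to vanish at every point of $\Sigma$. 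Injectivity of the embedding of $C^*_{\rm red}(G,\Sigma)$ into the functions on $\Sigma$ then yields $c=0$, hence $a=0$.

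For the module property (iii), I would first note that $Q(fb)=Q(f)b$ already holds on $C_c(G,\Sigma)$: this reduces by linearity to \lemref{Q}(iv), because $C_c(G,\Sigma)$ is spanned by the elements $\Delta_S b'$ and $(\Delta_S b')b=\Delta_S(b'b)$. Viewing $C_0(G^{(0)})$ inside the multiplier algebra of $C^*_{\rm red}(G,\Sigma)$, right multiplication by $b$ is contractive; approximating $f$ in reduced norm by $f_n\in C_c(G,\Sigma)$ and $b$ uniformly by $b_n\in C_c(G^{(0)})$, and invoking the continuity of $Q$, I pass to the limit in $Q(f_nb_n)=Q(f_n)b_n$ to obtain $Q(fb)=Q(f)b$.

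The delicate point is faithfulness in the non-Hausdorff case. Then, by \lemref{Q}(vi), $Q$ does not take values in $C_0(G^{(0)})$, so it is not a conditional expectation onto a C*-subalgebra and one cannot appeal to the usual faithfulness of the canonical expectation. Instead the whole argument rests on the two facts recalled before the statement — that $C^*_{\rm red}(G,\Sigma)$ embeds \emph{injectively} into the bounded functions on $\Sigma$, and that the product there is still given by convolution — which are precisely what legitimizes the pointwise formula $Q(c^*c)(x)=\int_{G_x}|c|^2\,d\lambda_x$ on all of $C^*_{\rm red}(G,\Sigma)$ and thereby upgrades the $C_c$-level computation to the C*-completion.
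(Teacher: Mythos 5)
Your proof is correct and follows essentially the same route as the paper: the extension by continuity from the bound $|Q(f)(x)|\le\|f\|_{\rm red}$ is identical, and the paper establishes (i)--(iii) by invoking the Hausdorff-case argument of \cite[Proposition 4.3]{ren:cartan}, which is precisely the pointwise computation $Q(c^*c)(x)=\int_{G_x}|c|^2\,d\lambda_x$ that you carry out. The only difference is that you spell out, rather than cite, how the injective embedding of $C^*_{\rm red}(G,\Sigma)$ into bounded homogeneous functions and the persistence of the convolution formula (both recalled in the paper just before the lemma) legitimize that computation on the completion.
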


\begin{proof}  Since $|Q(f)(x)|\le \|f\|_{\rm red}$ for $f\in C_c(G,\Sigma)$ and $x\in G^{(0)}$, $Q$ extends by continuity to a bounded linear map from  $C^*_{\rm red}(G,\Sigma)$ into $\F (G^{(0)})$. It is shown as in the case when $G$ is Hausdorff (\cite[Proposition 4.3]{ren:cartan}) that $Q$ satisfies (i), (ii) and (iii).
\end{proof}

\begin{thm}\label{universal} Let $(\G,{\mathcal S})$ be a Boolean twisted inverse semigroup and $(G,\Sigma)$ its twisted groupoid of germs. Then the full C*-algebra $C^*(G,\Sigma)$ is universal with respect to the representations of $(\G,{\mathcal S})$. 
\end{thm}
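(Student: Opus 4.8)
The plan is to realize $C^*(G,\Sigma)$ as the universal C*-algebra of $(\G,{\mathcal S})$: exhibit a canonical representation $\iota$ of $(\G,{\mathcal S})$ inside $C^*(G,\Sigma)$, show that composition with $\iota$ sets up a bijection between $*$-representations of $C^*(G,\Sigma)$ and representations of $(\G,{\mathcal S})$, and then identify the full norm with the supremum of operator norms taken over the latter. First I would fix the meaning of a representation of $(\G,{\mathcal S})$: a map $\pi$ from ${\mathcal S}$ to the partial isometries of some $B(H)$ that is a $*$-semigroup homomorphism, that carries the central circle correctly (so the Clifford part ${\mathcal T}$, given by a $\t$-valued function, acts by the corresponding scalars on the range projections), and that sends orthogonal joins to sums, $\pi(s\vee t)=\pi(s)+\pi(t)$. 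The canonical representation is $\iota\colon s\mapsto\Delta_{S(s)}$, where $S(s)$ is the bisection of $\Sigma$ attached to $s$ in \propref{isgtwist to gpdtwist}. By \lemref{map u} this is a $*$-semigroup homomorphism ($\Delta_S*\Delta_T=\Delta_{ST}$, $\Delta_S^*=\Delta_{S^{-1}}$), it is compatible with the twist because $\Delta$ sends the Clifford part to the functions $\tilde b$, and it sends orthogonal joins to sums since orthogonal bisections have disjoint supports. Moreover the $\iota(s)$ generate a dense $*$-subalgebra of $C^*(G,\Sigma)$, which will give uniqueness of factorizations.

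Next I would treat the two directions. Composing any nondegenerate $*$-representation $L$ of $C^*(G,\Sigma)$ with $\iota$ yields a representation $L\circ\iota$ of $(\G,{\mathcal S})$; this is immediate from the properties of $\iota$ just listed. For the converse, given a representation $\pi$ of $(\G,{\mathcal S})$ on $H$ I would build a $*$-representation $\tilde\pi$ of $C_c(G,\Sigma)$ with $\tilde\pi\circ\iota=\pi$. The restriction of $\pi$ to the idempotents $\G^{(0)}=\B(X)$ is a Boolean homomorphism into projections and hence, by Stone duality, extends to a representation of $C_0(G^{(0)})$; since every element of $C_c(G,\Sigma)$ is a finite sum $\sum_i\Delta_{S_i}b_i$ with $S_i\in{\mathcal S}$ and $b_i\in C_c(G^{(0)})$, one sets $\tilde\pi(\sum_i\Delta_{S_i}b_i)=\sum_i\pi(s_i)\pi(b_i)$. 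Well-definedness and multiplicativity are exactly the content of the representation correspondence proved in Appendix B (\thmref{twisted rep}); the intrinsic reconstruction formula $f([S,x])=Q(\Delta_S^**f)(x)$ of \lemref{Q}(v), together with the relations of \lemref{map u}, pins down the coefficients of $f$ and turns these verifications into bookkeeping.

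To finish I would pass from $C_c(G,\Sigma)$ to the full completion. The full C*-norm on $C_c(G,\Sigma)$ is by definition the supremum of $\|L(f)\|$ over its nondegenerate $*$-representations $L$; each $\tilde\pi$ is bounded on the generators because the $\iota(s)$ are partial isometries, so it extends to $C^*(G,\Sigma)$. By the two directions above, this supremum coincides with the supremum of $\|\pi(f)\|$ taken over representations $\pi$ of $(\G,{\mathcal S})$, which is precisely the defining property of the universal C*-algebra of $(\G,{\mathcal S})$. Uniqueness of the factorization of a given $\pi$ through $C^*(G,\Sigma)$ follows since $\iota({\mathcal S})$ spans a dense subalgebra, so the extending representation is determined by its values on the $\iota(s)$.

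The step I expect to be the main obstacle is the well-definedness of $\tilde\pi$ in the converse direction: the expression $f=\sum_i\Delta_{S_i}b_i$ is far from unique, and in the non-Hausdorff case the elements of $C_c(G,\Sigma)$ are not genuine continuous functions on $\Sigma$ but finite sums supported on Hausdorff pieces. This is where \lemref{Q} does the real work, item (v) giving a representation-independent description of the coefficients of $f$ so that the formula for $\tilde\pi$ is independent of the chosen decomposition, while \lemref{Q2} controls the abelian part $Q$ at the C*-level. Since this well-definedness and the resulting bijection are precisely what \thmref{twisted rep} supplies, the present theorem reduces, once that correspondence is in hand, to the identification of the two suprema defining the full norm.
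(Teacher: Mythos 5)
Your proposal is correct and follows essentially the same route as the paper: both reduce the theorem to the bijection of \thmref{twisted rep} between representations of $(\G,{\mathcal S})$ and $*$-representations of $C_c(G,\Sigma)$, and then pass to the full completion $C^*(G,\Sigma)$. The paper disposes of this last step by citing Quigg and Sieben for the unique extension of representations of $C_c(G,\Sigma)$ to $C^*(G,\Sigma)$, which is precisely the boundedness-via-partial-isometries argument you sketch, so the two proofs differ only in how much of that extension step is written out.
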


\begin{proof} This is a corollary of \thmref{twisted rep} because the representations of $C_c(G,\Sigma)$ extend uniquely to representations of $C^*(G,\Sigma)$ (see \cite[Proposition 3.2]{qs:C*-actions}).
\end{proof}

This theorem has its origin in the pioneering work of Paterson (see \cite[Theorem 3.2.2]{pat:gpd}). Since then, various generalizations of Paterson's theorem have appeared, often with additional assumptions (for example \cite[Proposition 2.14]{be:fell} or \cite[Corollary 5.6]{bm:isg}).\\
We shall resume the study of the pair $(C_{\rm red}^*(G,\Sigma), C_0(G^{(0)}))$ in the next section after the introduction of the partial isometry normalizer.

\vskip4mm
\section{Abstract realization of a Cartan pair}

Our main example of a twisted Boolean inverse semigroup is the partial isometry normalizer of a Cartan subalgebra in a C*- or a W*-algebra. Of course, this is interesting only for C*-algebras containing many projections. Adapting \cite[Definition II.4.9]{ren:approach} and \cite[Definition 4.1]{ren:cartan}, we define

\begin{defn} Let $A$ be a C*-algebra and $B$ be an abelian C*-subalgebra.  We let $\B$ be the Boolean algebra of projections of $B$ and $X$ be the dual space of $\B$. 
\begin{enumerate}
 \item The {\it partial isometry normalizer}  ${\rm PIN}(B)$ of $B$ is the inverse semigroup
 of partial isometries $u$ of $A$ such that $d(u),r(u)\in B$, $uBu^*\subset B$ and $u^*Bu\subset B$.
 \item The semigroup of partial isometries of $B$ is denoted by ${\rm PI}(B)$. It coincides with the {\it trivial Clifford inverse semigroup} ${\mathcal T}(X)$ defined before \defnref{isg twist}.
 \item The action by conjugation on $B$ of a partial isometry $u\in{\rm PIN}(B)$ defines a homeomorphism  $\varphi_u: {\rm dom}(u)\to {\rm ran}(u)$, where ${\rm dom}(u)$ [resp. ${\rm ran}(u)$] is the compact open subset corresponding to the projection $d(u)$ [resp. $r(u)$]. The {\it ample pseudogroup} ${\mathcal G}(B)$ of $B$  is the pseudogroup  of these partial homeomorphisms of $X$.\end{enumerate}
\end{defn}

In \cite{ren:cartan}, the normalizer $N(B)$ is defined as the semigroup of all elements of $A$ which normalize $B$. The inverse semigroup ${\mathcal N}(B)$ of \cite[Definition II.4.9]{ren:approach} agrees with ${\rm PIN}(B)$ defined here. When $B$ contains an approximate unit for $A$, then the conditions  $d(u),r(u)\in B$ are implied by the others and one has 
\[{\rm PIN}(B)=\{u\in {\rm PI}(A): uBu^*\subset B\,{\rm and}\, u^*Bu\subset B\}.\]

 Moreover, if $B$ is maximal abelian and ${\rm PIN}(B)$ generates $A$, $B$ always contains an approximate unit for $A$ (\cite[Theorem 2.6]{pit:unit}). The pseudogroup $\G(B)$ is the quotient of ${\rm PIN}(B)$ by its centralizer, namely the set of elements which commute with every idempotent. Equivalently, it is the image of ${\rm PIN}(B)$ under the Munn representation (see \cite[Section 4]{law:isg3}). 
 
\begin{lem} Let $A$ be a C*-algebra and $B$ be an abelian C*-subalgebra. The inverse subsemigroup ${\rm PI}(B)$ of ${\rm PIN}(B)$ is normal, in the sense that
\begin{enumerate}
 \item ${\rm PI}(B)$ has the same idempotents as ${\rm PIN}(B)$;
 \item for all $u\in {\rm PI}(B)$ and $e\in\B$, $ue=eu$;
 \item for all $u\in{\rm PIN}(B)$ and $v\in {\rm PI}(B)$, $uvu^*$ belongs to ${\rm PI}(B)$.
\end{enumerate}
Therefore, the quotient ${\mathcal H}(B)\defequal{\rm PIN}(B)/{\rm PI}(B)$ for the congruence $u\sim v$ if and only if $d(u)=d(v)$ and $u^*v\in{\rm PI}(B)$ is an inverse semigroup. 
 \end{lem}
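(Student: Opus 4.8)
The plan is to establish the three normality conditions in turn and then to verify directly that $\sim$ is a congruence on the inverse semigroup ${\rm PIN}(B)$, after which the quotient is automatically an inverse semigroup. For (i), I would first note that the idempotents of ${\rm PIN}(B)$ (in the inverse-semigroup sense) are self-adjoint: in an inverse semigroup of partial isometries the inverse of $u$ is $u^*$, so any idempotent $e$ satisfies $e=e^{-1}=e^*$ and is therefore a projection; if moreover $e\in{\rm PIN}(B)$ then $e=e^*e=d(e)\in B$, so $e\in\B$. Conversely every $e\in\B$ lies in ${\rm PI}(B)\subseteq{\rm PIN}(B)$ and is idempotent, so both semigroups have idempotent set exactly $\B$. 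Condition (ii) is then immediate, since ${\rm PI}(B)\subseteq B$ and $\B\subseteq B$ while $B$ is abelian, whence $ue=eu$.

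For (iii), write $w=uvu^*$ and $p=d(u)=u^*u\in\B$. Since $v\in{\rm PI}(B)\subseteq B$ and $uBu^*\subseteq B$, we have $w\in B$. To see that $w$ is a partial isometry I would use the identity $u^*=u^*uu^*=pu^*$ together with the fact that $v,v^*,p$ all commute in the abelian algebra $B$. Then $uvu^*=uvpu^*$, and since $vpv^*pv=(vv^*v)p=vp$ (using $vv^*v=v$ and $p^2=p$) one obtains $ww^*w=uvpv^*pvu^*=u(vp)u^*=uvpu^*=w$. Hence $w$ is a partial isometry lying in $B$, that is $w\in{\rm PI}(B)$.

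It then remains to check that $\sim$ is a congruence. Reflexivity (as $u^*u=d(u)\in\B\subseteq{\rm PI}(B)$) and symmetry (since ${\rm PI}(B)$ is $*$-closed and $d$ is preserved) are immediate, so the substance is transitivity together with compatibility with the product and the involution, and this is exactly where the normality conditions (i)--(iii) enter. The efficient route is to reformulate the relation: $u\sim v$ should be equivalent to $v=uz$ for a partial isometry $z\in{\rm PI}(B)$ with $z^*z=zz^*=d(u)$, i.e.\ a unitary supported on the corner $d(u)$. Granting this, transitivity is clear because the product of two such corner-unitaries over the common projection $p$ is again one; compatibility with multiplication follows by pushing $z$ across a product and invoking (iii) to keep conjugated factors inside ${\rm PI}(B)$ and (ii) to commute idempotents past elements of ${\rm PI}(B)$; and compatibility with the involution follows from symmetry. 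Finally, a congruence on an inverse semigroup always yields an inverse-semigroup quotient, so ${\mathcal H}(B)$ is an inverse semigroup.

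The main obstacle I anticipate is precisely this multiplicative compatibility of $\sim$, which encapsulates normality. In establishing the reformulation above, the delicate point is to pin down the support of $z=u^*v$: one must use $u^*v\in{\rm PI}(B)$ together with $d(u)=d(v)$ to deduce $r(u)=r(v)$ and $uz=v$, so that $z$ really is a corner-unitary rather than merely a partial isometry of smaller support. Once that support bookkeeping is under control, the remaining verifications reduce to routine manipulations with the partial-isometry identities and the commutativity of $B$.
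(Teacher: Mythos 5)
Your verifications of (i)--(iii) are correct and supply the details the paper dismisses as ``clear''; in particular the computation $ww^*w=uvpv^*pvu^*=u(vv^*v)pu^*=uvu^*$ for (iii), using commutativity of $B$ and $pu^*=u^*$, is exactly right. The genuine gap is at the step you yourself flag as delicate, and it cannot be closed as stated: from $d(u)=d(v)$ and $u^*v\in{\rm PI}(B)$ one \emph{cannot} deduce $r(u)=r(v)$, nor $v=u(u^*v)$. Concretely, take $A=M_2(\bC)$, $B$ the diagonal subalgebra, and the matrix units $u=e_{11}$, $v=e_{21}$. Both lie in ${\rm PIN}(B)$, $d(u)=d(v)=e_{11}$, and $u^*v=0\in{\rm PI}(B)$; yet $r(u)=e_{11}\neq e_{22}=r(v)$, and $v$ is not of the form $uz$ with $z\in{\rm PI}(B)$. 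The same example shows that the relation, read literally, is not even compatible with multiplication: $u\sim v$, but $e_{12}u=0$ and $e_{12}v=e_{11}$ have different domain projections, so $e_{12}u\not\sim e_{12}v$. Hence your corner-unitary reformulation is not equivalent to the displayed relation, and no amount of support bookkeeping will make it so; the congruence verification that you build on top of it therefore collapses.

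What your attempt actually uncovers is that the congruence formula in the lemma is misstated, a slip the paper's own proof never confronts because it simply declares (i)--(iii) clear and cites the general theory of quotients by normal inverse subsemigroups. The congruence that theory attaches to ${\rm PI}(B)$ is: $u\sim v$ iff $d(u)=d(v)$ and $uv^*\in{\rm PI}(B)$ (equivalently, $u^*v\in{\rm PI}(B)$ \emph{together with} $d(u^*v)=d(u)$). With this formula your plan does go through: since $B$ is abelian, the partial isometry $z'=uv^*\in{\rm PI}(B)$ satisfies $z'z'^*=z'^*z'$, and these two projections are $uv^*vu^*=r(u)$ and $vu^*uv^*=r(v)$ respectively, so $r(u)=r(v)$; then $z=u^*v$ satisfies $z^*z=zz^*=d(u)$ and $uz=r(u)v=v$, which is precisely your corner-unitary description, and transitivity and two-sided multiplicative compatibility follow from (ii), (iii) and commutativity of $B$ as you sketched. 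So your direct verification is a viable and genuinely different route from the paper's appeal to general theory, but only after the relation is corrected; against the lemma's literal formula, the key deduction you assert is false.
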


\begin{proof} The items (i), (ii) and (iii) are clear. The definition of the quotient by a normal inverse subsemigroup belongs to the general theory of inverse semigroups (see for example \cite{law:inverse,law:isg3}). 
\end{proof}

Thus $\G(B)$ is a quotient of ${\mathcal H}(B)$. We shall be mostly concerned by the case when these inverse semigroups are equal.

\begin{prop}\label{masa} Let $A$ be a C*-algebra and $B$ be a maximal abelian C*-subalgebra of $A$ generated by its projections.  Then ${\mathcal H}(B)=\G(B)$.
\end{prop}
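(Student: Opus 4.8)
The plan is to reduce the asserted equality to an identification of the centralizer of the idempotents of ${\rm PIN}(B)$ with ${\rm PI}(B)$. First recall that the idempotents of ${\rm PIN}(B)$ are precisely the projections forming $\B$, and that both quotients in play are idempotent-separating. Indeed, the congruence defining ${\mathcal H}(B)$ has full kernel ${\rm PI}(B)$, while $\G(B)$, being the image of the Munn representation, is the quotient by the maximum idempotent-separating congruence, whose kernel is the centralizer $Z\defequal\{u\in{\rm PIN}(B):ue=eu \text{ for all }e\in\B\}$. Since an idempotent-separating congruence on an inverse semigroup is determined by its kernel, and since ${\rm PI}(B)\subseteq Z$ by item (ii) of the preceding lemma --- which is exactly what makes $\G(B)$ a quotient of ${\mathcal H}(B)$ --- it suffices to prove the reverse inclusion $Z\subseteq{\rm PI}(B)$.

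So I would take $u\in{\rm PIN}(B)$ commuting with every projection of $B$ and aim to show $u\in B$. Because $B$ is generated by its projections, the linear span of $\B$ is dense in $B$, so commuting with every projection forces $u$ to commute with all of $B$; that is, $u\in B'\cap A$.

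The hard part --- where the maximality hypothesis is used --- is the identification $B'\cap A=B$. For this I would take a self-adjoint $a\in B'\cap A$ and observe that the C*-subalgebra generated by $B$ and $a$ is abelian, since $a$ is self-adjoint, commutes with the abelian algebra $B$, and commutes with itself; maximality of $B$ then forces this subalgebra to equal $B$, whence $a\in B$. Splitting a general element of $B'\cap A$ into real and imaginary parts yields $B'\cap A\subseteq B$, the reverse inclusion being automatic. Applying this to $u$ gives $u\in B$, and since $u$ is a partial isometry it lies in ${\rm PI}(B)$, completing the inclusion $Z\subseteq{\rm PI}(B)$ and hence the equality $Z={\rm PI}(B)$.

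Combining the two inclusions, the kernels of the two idempotent-separating congruences coincide, so the congruences themselves coincide and ${\mathcal H}(B)=\G(B)$. I expect the only genuine obstacle to be the passage from commuting with projections to membership in $B$; the inverse-semigroup bookkeeping that idempotent-separating congruences correspond bijectively to the full normal subsemigroups lying between $\B$ and its centralizer is routine and may be quoted from the general theory of inverse semigroups cited earlier.
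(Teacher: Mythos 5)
Your proof is correct. You should know, though, that the paper's own ``proof'' of this proposition is a one-line citation of the remark following Definition II.4.9 in Renault's 1980 lecture notes; your argument supplies exactly the content that citation stands in for, carried out in the framework the paper sets up just before the statement: $\G(B)$ is the image of the Munn representation, hence the quotient of ${\rm PIN}(B)$ by the maximum idempotent-separating congruence, whose kernel is the centralizer $Z$ of $\B$; ${\mathcal H}(B)$ is the quotient by an idempotent-separating congruence with kernel ${\rm PI}(B)$; and since idempotent-separating congruences are determined by their kernels, everything reduces to $Z={\rm PI}(B)$. The two hypotheses enter exactly where you put them: ``generated by its projections'' upgrades commutation with $\B$ to commutation with all of $B$, and maximality gives $B'\cap A=B$ via the standard self-adjoint-element argument. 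So this is the natural (and almost certainly the cited) proof, made explicit.

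One caveat: your assertion that ``the congruence defining ${\mathcal H}(B)$ has full kernel ${\rm PI}(B)$'' holds for the standard congruence attached to the normal subsemigroup ${\rm PI}(B)$, but not for the relation as literally displayed in the paper's lemma ($d(u)=d(v)$ and $u^*v\in{\rm PI}(B)$). In $A=M_2(\mathbf{C})$ with $B$ the diagonal masa, $u=E_{12}$ and $v=E_{22}$ satisfy $d(u)=d(v)=E_{22}$ and $u^*v=E_{21}E_{22}=0\in{\rm PI}(B)$, yet their images in $\G(B)$ differ; so the displayed relation identifies too much and should be read as, e.g., $r(u)=r(v)$ and $u^*v\in{\rm PI}(B)$, or $d(u)=d(v)$ and $uv^*\in{\rm PI}(B)$. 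Your argument, which works at the level of kernels of idempotent-separating congruences, is insensitive to this and goes through once the intended congruence is used.
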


\begin{proof} This is the Remark following \cite[Definition II.4.9]{ren:approach}.
\end{proof}

\begin{prop}\label{tbisg} Let $B$ be an abelian C*-subalgebra of a C*-algebra $A$. Then the pair $({\mathcal H}(B), {\rm PIN}(B))$ is a twisted Boolean inverse semigroup.
\end{prop}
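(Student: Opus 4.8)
The plan is to verify the two defining conditions of \defnref{isg twist} directly for the proposed extension
\[{\rm PI}(B)\stackrel{i}{\rightarrowtail}{\rm PIN}(B)\stackrel{p}{\twoheadrightarrow}{\mathcal H}(B),\]
where $i$ is the inclusion and $p$ is the quotient map by the normal inverse subsemigroup ${\rm PI}(B)$. First I would establish that $({\mathcal H}(B),{\rm PIN}(B))$ is an inverse semigroup extension in the appropriate sense: the preceding lemma already identifies ${\rm PI}(B)$ as a normal inverse subsemigroup of ${\rm PIN}(B)$ and gives ${\mathcal H}(B)$ as the corresponding quotient inverse semigroup, so $p$ is a well-defined idempotent-separating homomorphism with kernel ${\rm PI}(B)$. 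Since ${\rm PI}(B)$ and ${\rm PIN}(B)$ share the same idempotents (item (i) of the lemma), $p$ restricts to a bijection on idempotents, so ${\mathcal H}(B)^{(0)}$ is canonically identified with $\B$, the Boolean algebra of projections of $B$, and hence with ${\rm PIN}(B)^{(0)}$.

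Next I would check that the kernel ${\rm PI}(B)$ is the trivial Clifford inverse semigroup over the dual space $X$ of ${\mathcal H}(B)^{(0)}=\B$. This is precisely the content of part (ii) of the defining \defnref{} of ${\rm PIN}(B)$: the semigroup ${\rm PI}(B)$ of partial isometries of the abelian algebra $B$ coincides with ${\mathcal T}(X)$. Concretely, a partial isometry $v$ in the commutative algebra $B$ has the form $v=h\,e$ where $e=d(v)=r(v)$ is a projection and $h$ is a unitary in the corner $eBe$, i.e.\ a continuous $\t$-valued function on the compact open set ${\rm supp}(v)\subset X$, which is exactly an element of ${\mathcal T}(X)$. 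I would then verify the centrality requirement of \defnref{isg twist}: the conjugation action of ${\rm PIN}(B)$ on ${\rm PI}(B)={\mathcal T}(X)$ must be the trivial extension of its action on $\B$. For $u\in{\rm PIN}(B)$ and $v=h\,e\in{\rm PI}(B)$, condition (iii) of the lemma guarantees $uvu^*\in{\rm PI}(B)$, and one computes that the resulting $\t$-valued function is simply $h\circ\varphi_{u}^{-1}$ on $\varphi_u({\rm supp}(v))$; that is, $u$ acts on the function $h$ by precomposition with the homeomorphism $\varphi_u$ coming from its action on $X$, with no twisting of the $\t$-values. This is exactly what it means for the action to be the trivial extension of the action on $\B$.

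The main obstacle I anticipate is the centrality/triviality verification, since it requires showing that conjugation by a normalizer $u$ does not introduce a phase depending on $u$ beyond the underlying homeomorphism $\varphi_u$. The key point is that $v=h\,e$ and $u$ is a partial isometry, so $u v u^* = u h e u^*$, and because $h$ is a function of the projections in $B$ on which $u$ acts by the algebra automorphism $b\mapsto ubu^*$, functional calculus gives $u h u^* = h\circ\varphi_u^{-1}$ on $r(u)$; intersecting supports yields the claimed formula. Finally, it remains to confirm that ${\rm PIN}(B)$ is genuinely Boolean as an inverse semigroup in the sense of \defnref{boolean isg}: its idempotent semilattice $\B$ is a Boolean algebra by hypothesis, and any orthogonal pair $(u_1,u_2)$ in ${\rm PIN}(B)$ admits a join $u_1\vee u_2=u_1+u_2$, which again lies in ${\rm PIN}(B)$ because orthogonality makes the sum a partial isometry and the normalizing conditions are preserved under such orthogonal sums. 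With these pieces assembled, all the hypotheses of \defnref{isg twist} are met, so $({\mathcal H}(B),{\rm PIN}(B))$ is a twisted Boolean inverse semigroup.
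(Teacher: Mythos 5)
Your proposal is correct and follows essentially the same route as the paper: the extension comes from the preceding lemma, the Boolean property comes from orthogonal sums $u+v$ of partial isometries, and the twist conditions are the identification ${\rm PI}(B)={\mathcal T}(X)$ together with triviality of the conjugation action (your functional-calculus justification of the latter is in fact more detailed than the paper, which simply asserts it). One small omission: \defnref{isg twist} requires the \emph{base} ${\mathcal H}(B)$ to be a Boolean inverse semigroup, and you verify the orthogonal-join condition only for ${\rm PIN}(B)$, identifying ${\mathcal H}(B)^{(0)}$ with $\B$ but never checking joins of orthogonal pairs in the quotient. This is easy to repair, and is exactly the point the paper covers with ``one deduces that ${\mathcal H}(B)$ has the same property'': if $p(u),p(v)$ are orthogonal in ${\mathcal H}(B)$ then $u^*v$ and $uv^*$ lie in ${\rm PI}(B)$ with zero domain, hence vanish, so $(u,v)$ is orthogonal in ${\rm PIN}(B)$ and $p(u+v)$ is the required join; you should add this line so that all hypotheses of \defnref{isg twist} are actually in place.
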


\begin{proof} By definition,
\[{\rm PI}(B)\stackrel{i}{\rightarrowtail}{\rm PIN}(B)\stackrel{p}{\twoheadrightarrow}  {\mathcal H}(B)\]
 is an exact sequence of inverse semigroups. If $(u,v)$ is an orthogonal pair in ${\rm PIN}(B)$, $u+v$ belongs to  ${\rm PIN}(B)$ and is the join of $u$ and $v$ in the ordered set ${\rm PI}(A)$ of partial isometries of $A$, and a fortiori in ${\rm PIN}(B)$. One deduces that ${\mathcal H}(B)$ has the same property. Therefore, ${\rm PI}(B)$, ${\rm PIN}(B)$ and ${\mathcal H}(B)$ are Boolean inverse semigroups with $\B$ as common Boolean algebra of idempotents. Moreover ${\rm PI}(B)$ is the trivial Clifford inverse semigroup over the dual space of $\B$ and the action of ${\rm PIN}(B)$ on ${\rm PI}(B)$ is the trivial extension of its action on $\B$.
\end{proof}

Let $B$ be an abelian C*-subalgebra $B$ of a C*-algebra $A$. We can construct the twisted groupoid of germs $(H,\Sigma)$  of the Boolean inverse semigroup $({\mathcal H}(B),{\rm PIN}(B))$. We immediately have an injective map $j:{\rm PIN}(B)\to C_c(H,\Sigma)$, associating to $u\in{\rm PIN}(B)$ the function $\Delta_{S(u)}$, where $S(u)$ is the compact continuous bisection defined by $u$. From \lemref{map u}, the map $j$ satisfies $j(uv)=j(u)*j(v)$ for $u,v\in{\rm PIN}(B)$, $j(u^*)=j(u)^*$ for $u\in{\rm PIN}(B)$ and $j(b)=\hat b$, where $\hat b$ is the Gelfand transform of $b$ for $b\in{\rm PI}(B)$. In the following, we do not write the Gelfand transform to ease the formulas and write $B=C_0(X)$ and $B_c=C_c(X)$.

\begin{prop}\label{tilde pi}  Let $B$ be an abelian C*-subalgebra of a C*-algebra $A$. Assume that $B$ is generated by its projections. Let ${\rm PIN}(B)B_c$ be the subspace of finite sums of elements of the form $ub$ with $u\in{\rm PIN}(B)$ and $b\in B_c$.  Let $(H,\Sigma)$ be the twisted groupoid of germs of $({\mathcal H}(B),{\rm PIN}(B))$.Then
\begin{enumerate}
 \item ${\rm PIN}(B)B_c$ is a $*$-subalgebra of $A$.
 \item There is a unique $*$-homomorphism $\tilde\pi$ of $C^*(H,\Sigma)$ into $A$ sending $C_c(H,\Sigma)$ onto ${\rm PIN}(B)B_c$ and such that
\[\tilde\pi(\sum j(u_i)b_i)=\sum u_ib_i,\] 
where the sum is finite, $u_i \in {\rm PIN}(B)$, and $b_i \in B_c$.
\end{enumerate}
 \end{prop}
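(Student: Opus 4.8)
The plan is to dispatch (i) by a direct verification and (ii) by feeding the inclusion $A\supset{\rm PIN}(B)$ into the universal property of $C^*(H,\Sigma)$ from \thmref{universal}.

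For (i), I would check closure under the algebra operations directly. Closure under finite sums and scalars is immediate from the definition of ${\rm PIN}(B)B_c$. For products, given $u_1b_1$ and $u_2b_2$ I would write $(u_1b_1)(u_2b_2)=(u_1u_2)\big((u_2^*b_1u_2)b_2\big)$ and observe that $u_1u_2\in{\rm PIN}(B)$ while $u_2^*b_1u_2\in B_c$: conjugation by $u_2$ implements the partial homeomorphism $\varphi_{u_2}$ of $X$, hence carries a compactly supported continuous function to one of the same type, and the product of two such functions remains in $B_c=C_c(X)$. For adjoints, $(ub)^*=u^*(ub^*u^*)$ with $u^*\in{\rm PIN}(B)$ and $ub^*u^*\in B_c$ by the same remark. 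Here the hypothesis that $B$ is generated by its projections is exactly what lets us write $B=C_0(X)$ with $X$ Boolean and $B_c=C_c(X)$.

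For (ii), the key observation is that the inclusion $\iota\colon{\rm PIN}(B)\hookrightarrow A$ is a representation of the twisted Boolean inverse semigroup $({\mathcal H}(B),{\rm PIN}(B))$. I would verify the representation axioms one by one: $\iota$ is multiplicative and $*$-preserving because these operations on ${\rm PIN}(B)$ are the restrictions of those of $A$; it is compatible with the twist because its restriction to the kernel ${\rm PI}(B)={\mathcal T}(X)$ sends a function $h$ to the corresponding partial isometry of $B$, intertwining the $\t$-action with scalar multiplication; and it is additive on orthogonal pairs since, as noted in the proof of \propref{tbisg}, the join $u\vee v$ of an orthogonal pair is the sum $u+v$ in $A$. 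Having exhibited a representation of $({\mathcal H}(B),{\rm PIN}(B))$, \thmref{twisted rep} produces the corresponding representation of $C_c(H,\Sigma)$, a $*$-homomorphism sending $j(u)=\Delta_{S(u)}$ to $u$ and $\hat b$ to $b$; by \thmref{universal} it extends uniquely to a $*$-homomorphism $\tilde\pi\colon C^*(H,\Sigma)\to A$.

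It then remains to read off the image and the formula. Using \propref{isgtwist to gpdtwist}(v) every compact continuous bisection of $\Sigma$ is $S(u)$ for some $u\in{\rm PIN}(B)$, so by \lemref{map u} and the spanning description of the convolution algebra, each element of $C_c(H,\Sigma)$ is a finite sum $\sum\Delta_{S(u_i)}b_i=\sum j(u_i)b_i$ with $b_i\in B_c=C_c(H^{(0)})$, which $\tilde\pi$ sends to $\sum u_ib_i$; hence $\tilde\pi$ carries $C_c(H,\Sigma)$ onto ${\rm PIN}(B)B_c$ with the stated formula, and uniqueness follows since $C_c(H,\Sigma)$ is dense in $C^*(H,\Sigma)$ and the formula determines $\tilde\pi$ there. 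The step needing the most care, and the main obstacle, is checking that $\iota$ genuinely satisfies all the axioms of a representation of the \emph{twisted} inverse semigroup — particularly the twist-compatibility and additivity conditions that \thmref{twisted rep} requires — since this is precisely what guarantees that the induced map on $C_c(H,\Sigma)$ is well defined, i.e. that $\sum j(u_i)b_i=0$ forces $\sum u_ib_i=0$. The universal property absorbs this well-definedness automatically, but one must ensure the inclusion is presented to it correctly.
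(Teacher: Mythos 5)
Your proposal is correct and follows essentially the same route as the paper: part (i) via the commutation relation between elements of $B_c$ and partial isometry normalizers (your $b_1u_2=u_2(u_2^*b_1u_2)$ is the paper's $ub=\alpha_u(b)u$ read from the other side), and part (ii) by recognizing the inclusion ${\rm PIN}(B)\hookrightarrow A$ as a representation of the twisted Boolean inverse semigroup $({\mathcal H}(B),{\rm PIN}(B))$ and invoking \thmref{universal} (hence \thmref{twisted rep}) to obtain $\tilde\pi$, then reading off the formula and the image. Your explicit verification of the representation axioms and of well-definedness simply spells out what the paper leaves implicit.
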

 
 \begin{proof}  We deduce (i) from the relation $ub=\alpha_u(b)u$, where $\alpha_u(b)=ubu^*$. Let us prove (ii). By construction, the twisted Boolean inverse semigroup $({\mathcal H}(B), {\rm PIN}(B))$  is represented as a twisted semigroup of partial isometries of $A$. By \thmref{universal}, this representation extends to a representation $\tilde\pi$ of the full C*-algebras $C^*(H,\Sigma)$. If $f\in C_c(H,\Sigma)$ is written  as a finite sum $\sum j(u_i) b_i$ as in the proposition, then $\tilde\pi(f)=\sum u_ib_i$. Note that $\tilde\pi(C_c(H,\Sigma))={\rm PIN}(B)B_c$. 
 \end{proof}
 
 Exel and Pitts give a similar result, namely \cite[Theorem 3.8.13]{ep:char}, with the sole assumption that $B$ is regular and where the twisted groupoid $(H,\Sigma)$ is constructed from a generating inverse subsemigroup of the normalizer $N(B)$.

\begin{prop}\label{tilde j}  Let $B$ be an abelian C*-subalgebra of a C*-algebra $A$. Assume that $B$ is generated by its projections. Let $(H,\Sigma)$ be the twisted groupoid of germs of $({\mathcal H}(B),{\rm PIN}(B))$. Then \tfae
\begin{enumerate}
\item There exists a linear map $P:{\rm PIN}(B)B_c\to \F(X)$ (the space of bounded complex-valued functions on $X$)
 such that
\begin{enumerate}
\item $P(u^*)=P(u)^*$ for all $u\in {\rm PIN(B)}$,
 \item $P(ub)=P(u)b$ for all $(u,b)\in {\rm PIN(B)}\times B_c$, and
 \item for $(u,x)\in{\rm PIN}(B)*X$, $[[u,x]]\not= x\,\Rightarrow\, P(u)(x)=0$, where the notation is that of \propref{isgtwist to gpdtwist}.
\end{enumerate}
 \item $\tilde\pi$ is a $*$-isomorphism of $C_c(H,\Sigma)$ onto ${\rm PIN}(B)B_c$.
\end{enumerate}
 \end{prop}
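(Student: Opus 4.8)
The plan is to route both implications through the single bridging identity $P\circ\tilde\pi=Q$, where $Q\colon C_c(H,\Sigma)\to\F(X)$ is the restriction-to-units map of \lemref{Q}. Recall that $\tilde\pi$ is already a surjective $*$-homomorphism onto ${\rm PIN}(B)B_c$ by \propref{tilde pi}, so assertion (ii) is exactly the injectivity of $\tilde\pi$; and $Q$ is faithful on positive elements by \lemref{Q}(iii). Hence, once $P\circ\tilde\pi=Q$ is available, faithfulness of $Q$ will force $\ker\tilde\pi=0$, while in the other direction injectivity of $\tilde\pi$ will let us simply transport $Q$.

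For (ii)$\Rightarrow$(i), I would put $P\defequal Q\circ\tilde\pi^{-1}$, which is linear and well defined. Property (a) comes from $\tilde\pi^{-1}$ being a $*$-map together with $Q(f^*)=Q(f)^*$ (\lemref{Q}(ii)); property (b) from $\tilde\pi^{-1}(ub)=j(u)b$ and $Q(\Delta_{S(u)}\hat b)=Q(\Delta_{S(u)})\hat b$ (\lemref{Q}(iv)). For (c) I would compute $P(u)(x)=Q(j(u))(x)=\Delta_{S(u)}(x,1)$ and note this is nonzero only when $(x,1)\in\t S(u)$; since $(x,1)$ has source the unit $x$ and $S(u)$ carries a unique element $[u,x]$ over $x$, one has $(x,1)\in\t S(u)$ iff $p([u,x])=x$ in $H$, that is iff $[[u,x]]=x$. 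Thus $[[u,x]]\neq x$ forces $P(u)(x)=0$.

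For (i)$\Rightarrow$(ii) I would first establish $P\circ\tilde\pi=Q$. Both maps are linear on $C_c(H,\Sigma)$, which is spanned by the functions $j(u)b=\Delta_{S(u)}\hat b$; since $\tilde\pi(j(u)b)=ub$, property (b) and \lemref{Q}(iv) reduce the identity to the equalities $P(u)=Q(j(u))$ for $u\in{\rm PIN}(B)$. By property (c) and the computation above both functions vanish off the fixed-point set $\{x:[[u,x]]=x\}$, so only their agreement there is at stake. At such an $x$ the germ of $\varphi_u$ is trivial, so by the definition of $\approx$ in \propref{isgtwist to gpdtwist} $u$ coincides on a compact open neighbourhood $V$ with a unimodular $w\in{\rm PI}(B)\subseteq B_c$; writing $u\,{\bf 1}_V=w$ and invoking (b) gives $P(u)(x)=P(w)(x)$, which equals $w(x)=Q(j(u))(x)$ because $P$ restricts to the canonical inclusion on $B_c$. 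Granting $P\circ\tilde\pi=Q$, let $f$ satisfy $\tilde\pi(f)=0$; then $\tilde\pi(f*f^*)=\tilde\pi(f)\tilde\pi(f)^*=0$, so $Q(f*f^*)=P(\tilde\pi(f*f^*))=0$, and \lemref{Q}(iii) yields $\int|f|^2\,d\lambda^x=0$ for all $x$, hence $f=0$.

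I expect the main obstacle to be the identity $P\circ\tilde\pi=Q$, concentrated in the diagonal matching $P(u)(x)=\Delta_{S(u)}(x,1)$ on $\{x:[[u,x]]=x\}$. Off the diagonal both sides vanish by (c), so the whole content is local: one must identify the normalizer $u$, near a fixed point of $\varphi_u$, with an honest element $w\in{\rm PI}(B)\subseteq B_c$ and use that $P$ acts there as the inclusion $B_c\hookrightarrow\F(X)$ — this expectation normalization is the feature of (i) that actually drives injectivity (note that the bare relations (a)--(c) are satisfied by $P=0$). In the non-Hausdorff case one should keep $Q$ valued in $\F(X)$ rather than $C_c(X)$ (\lemref{Q}(vi)), but the germ-and-twist bookkeeping is unchanged.
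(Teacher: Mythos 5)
Your direction (ii)$\Rightarrow$(i) is the paper's argument: $P\defequal Q\circ\tilde\pi^{-1}$, with (a) and (b) from \lemref{Q} and (c) from the fact that $Q(j(u))$ is supported on $S'(u)\cap H^{(0)}$, i.e.\ on $\{x:[[u,x]]=x\}$. Your direction (i)$\Rightarrow$(ii) is genuinely different in organization. The paper builds an explicit inverse: for $a\in{\rm PIN}(B)B_c$ it defines $\hat a(v,x)=P(v^*a)(x)$, checks that $\hat a$ descends to a homogeneous function on $\Sigma$, that $\hat u=\Delta_{S(u)}$ and $\widehat{ub}=j(u)b$, and concludes that $a\mapsto\hat a$ inverts $\tilde\pi$ on $C_c(H,\Sigma)$. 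You instead prove the single bridging identity $P\circ\tilde\pi=Q$ (correctly reduced, by linearity, (b) and \lemref{Q}(iv), to $P(u)=Q(j(u))$ for $u\in{\rm PIN}(B)$) and then kill the kernel: $\tilde\pi(f)=0$ gives $Q(f*f^*)=P(\tilde\pi(f)\tilde\pi(f)^*)=0$, hence $f=0$ by \lemref{Q}(iii), surjectivity being already in \propref{tilde pi}. This is leaner, since it avoids the well-definedness and homogeneity bookkeeping for $\hat a$; what it does not produce, the explicit inverse $\tilde\j$, costs nothing, as a bijective $*$-homomorphism has a $*$-inverse. The local computation at the heart of both proofs is the same: when $[[u,x]]=x$, the definition of $\approx$ yields $a\in{\rm PI}(B)$ with $|a(x)|=1$ and $ua\in{\rm PI}(B)$, hence $u{\bf 1}_V=w\in{\rm PI}(B)$ with $V={\rm supp}(a)\ni x$, and (b) transfers the evaluation to $w$. (One small omission: (c) says nothing at points $x\notin{\rm dom}(u)$; there you need $P(u)=P(u\,d(u))=P(u)\widehat{d(u)}$, again by (b).)

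The remark you relegate to a parenthesis is in fact the crucial issue. Both your proof and the paper's use that $P$ restricts on ${\rm PI}(B)\subset B_c$ to the Gelfand embedding, $P(w)=\hat w$ --- you at ``$P(w)(x)=w(x)$'', the paper tacitly at the step ``$\hat u(v,x)=P(v^*u)(x)=(v^*u)(x)$'' --- and, exactly as your observation that $P=0$ satisfies (a)--(c) shows, this normalization does not follow from (i) as stated. Consequently the proposition is false as literally written: (i) holds vacuously (take $P=0$), while (ii) can fail, e.g.\ for $B=\bC 1\subset A=M_2(\bC)$, where $X$ is a point, $H$ is the discrete group $PU(2)$, and $C_c(H,\Sigma)$ is an infinite-dimensional twisted group algebra mapped by $\tilde\pi$ onto $M_2(\bC)$. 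So the gap is in the hypothesis rather than in your argument relative to the paper's: condition (i) must be supplemented by $P(b)=\hat b$ for $b\in B_c$ (equivalently, $P(w)=\hat w$ for $w\in{\rm PI}(B)$). With that amendment your proof is complete and correct; the amendment is harmless where the proposition is applied, since in \thmref{abstract} the map $P$ is the restriction of a faithful normal conditional expectation onto $B$, while condition (ii) of \corref{char} should carry the same normalization.
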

 
 \begin{proof} Suppose that $\tilde\pi$ is a $*$-isomorphism of $C_c(H,\Sigma)$ onto ${\rm PIN}(B)B_c$. Let  $P=Q\circ\tilde\j$ where $Q$ is defined in \lemref{Q} and $\tilde\j$ is the inverse of $\tilde\pi$. From \lemref{Q}, $P$ is a linear map from  ${\rm PIN}(B)B_c$ to $\F(X)$ which satisfies (a) and (b).  By definition, $\tilde\j(u)=j(u)=\Delta_{S(u)}$ for $u\in{\rm PIN}(B)$. Therefore $|P(u)|={\bf 1}_{S'(u)\cap H^{(0)}}$, where $S'(u)$ is the bisection of $H$ defined by $p(u)\in{\mathcal H}$. We conclude that $P(u)(x)=0$ if $[[u,x]]\notin H^{(0)}$.
 \vskip2mm 
 Suppose conversely the existence of $P$ as in (i). For $a\in{\rm PIN}(B)B_c$, and $(v,x)\in {\rm PIN}(B)*X$, we define $\hat a(v,x)=P(v^*a)(x)$. If $b\in{\rm PI}(B)$ satisfies $b(x)=1$, then $\hat a(vb,x)=\hat a(v,x)$. Therefore, $\hat a(v,x)$ depends only on the equivalence class $[v,x]\in\Sigma$. The function $\hat a$ is homogeneous: let $\theta\in\t$ and $t\in{\rm PI}(B)$ such that $t(x)=\theta$. Then 
 \[\hat a((v,x) \theta)=\hat a(vt,x)=P((vt)^*a)(x)=P(t^*v^*a)(x)=t^*(x)P(v^*a)(x)=\overline\theta\hat a(v,x).\]  
 Let us show that, for $u\in{\rm PIN}(B)$, $\hat u=\Delta_{S(u)}$. Let $(v,x)\in {\rm PIN}(B)*X$. We have seen that $\hat u(v,x)=P(v^*u)(x)$ is non-zero if and only if $v^*u$ belongs to ${\rm PI}(B)$ and $(v^*u)(x)$ is non-zero. Then $\hat u(v,x)=P(v^*u)(x)=(v^*u)(x)$. This agrees with the definition of $\Delta_{S(u)}$. For $a=ub$ where $u\in{\rm PIN}(B)$ and $b\in B_c$, we have
\[\hat a(v,x)=P(v^*ub)(x)=P(v^*u)(x) b(x)=\hat u(v,x) b(x).\]
Thus $\hat a=j(u)b$. Therefore the map $\tilde\j$ sending $a\in {\rm PIN}(B)B_c$ to $\hat a\in C_c(H,\Sigma)$ is an inverse of the restriction of $\tilde\pi$ to $C_c(H,\Sigma)$. 
\end{proof}

We give a name to condition (c), which is similar to condition (4) of \cite[Theorem 3.1]{abcclmr}.

\begin{defn} A map $P:{\rm PIN}(B)\to \F(X)$ is called {\it separating} if for $u\in{\rm PIN}(B)$ and $x\in{\rm dom}(u)$ such that the germ of $\varphi_u$ at $x$  is not trivial, we have $P(u)(x)=0$.
 \end{defn}

\begin{cor}\label{char} Let $B$ an abelian C*-subalgebra a C*-algebra  $A$. Assume that $B$ is generated by its projections and that $A$ is generated by ${\rm PIN}(B)$. Let $(H,\Sigma)$ be the twisted Boolean groupoid corresponding to $({\mathcal H}(B), {\rm PIN}(B))$. Then \tfae
\begin{enumerate}
\item The map $\tilde\pi$ of \propref{tilde pi} gives an isomorphism of $C^*_{\rm red}(H,\Sigma)$ onto $A$.
\item There exists a linear map $P:A\to\F(X)$, where $X$ is the spectrum of $B$, such that
\begin{enumerate}
 \item $P$ is positive;
 \item $P$ is faithful;
 \item $P(ub)=P(u)b$ for all $(u,b)\in {\rm PIN(B)}\times B$;
 \item the restriction of $P$ to ${\rm PIN}(B)$ is separating.
\end{enumerate}
 \end{enumerate}
 \end{cor}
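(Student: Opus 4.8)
The plan is to reduce everything to \propref{tilde j}, which already settles the algebraic core on the dense subalgebra $C_c(H,\Sigma)\cong{\rm PIN}(B)B_c$, and then to bridge the gap between $C_c(H,\Sigma)$ and its reduced completion $C^*_{\rm red}(H,\Sigma)$ using the faithfulness of the two conditional-expectation-type maps $Q$ and $P$. Since \propref{tilde j} is phrased at the level of $C_c(H,\Sigma)$, the real content of the corollary is the passage to the reduced C*-level.

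\emph{Proof of} (ii)$\Rightarrow$(i). Given $P\colon A\to\F(X)$ with properties (a)--(d), I first restrict it to ${\rm PIN}(B)B_c$. Positivity forces $P$ to be selfadjoint, $P(a^*)=P(a)^*$ (decompose $a=h+ik$ with $h,k$ selfadjoint and use that $P$ sends selfadjoint elements to selfadjoint functions), so condition (a) of \propref{tilde j} holds; conditions (c) and (d) of the corollary are precisely conditions (b) and (c) of \propref{tilde j}. Hence by \propref{tilde j}, $\tilde\pi$ restricts to a $*$-isomorphism $C_c(H,\Sigma)\to{\rm PIN}(B)B_c$. Now the key identity: as a positive linear map between C*-algebras $P$ is bounded, so $P\circ\tilde\pi$ is a bounded map on the full C*-algebra $C^*(H,\Sigma)$, and it agrees on the dense subalgebra $C_c(H,\Sigma)$ with $Q\circ\lambda$, where $\lambda\colon C^*(H,\Sigma)\to C^*_{\rm red}(H,\Sigma)$ is the canonical surjection and $Q$ is the faithful positive map of \lemref{Q2}. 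By continuity $P\circ\tilde\pi=Q\circ\lambda$ on all of $C^*(H,\Sigma)$.

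Faithfulness then does the work twice. If $\lambda(f)=0$ then $Q\big(\lambda(f^*f)\big)=0$, hence $P\big(\tilde\pi(f)^*\tilde\pi(f)\big)=0$, and faithfulness of $P$ gives $\tilde\pi(f)=0$; thus $\ker\lambda\subseteq\ker\tilde\pi$ and $\tilde\pi$ factors as $\rho\circ\lambda$ for a $*$-homomorphism $\rho\colon C^*_{\rm red}(H,\Sigma)\to A$. Conversely, if $\rho(g)=0$ then $Q(g^*g)=P\big(\rho(g)^*\rho(g)\big)=0$, and faithfulness of $Q$ on $C^*_{\rm red}(H,\Sigma)$ (\lemref{Q2}) forces $g=0$, so $\rho$ is injective. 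Finally $\rho$ has closed range containing the dense subalgebra ${\rm PIN}(B)B_c=\tilde\pi\big(C_c(H,\Sigma)\big)$; since $A$ is generated by ${\rm PIN}(B)$ and $B$ by its projections, ${\rm PIN}(B)B_c$ is dense in $A$ (each $u\in{\rm PIN}(B)$ equals $u\,\mathbf 1_{{\rm dom}(u)}$ with $\mathbf 1_{{\rm dom}(u)}\in B_c$), so $\rho$ is onto $A$. Hence $\rho$ is the isomorphism $C^*_{\rm red}(H,\Sigma)\cong A$ induced by $\tilde\pi$, which is (i).

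\emph{Proof of} (i)$\Rightarrow$(ii). Given the isomorphism $\rho\colon C^*_{\rm red}(H,\Sigma)\to A$ induced by $\tilde\pi$, I set $P=Q\circ\rho^{-1}$. Positivity, faithfulness and the module property (a)--(c) are inherited directly from the corresponding properties of $Q$ in \lemref{Q2}, while the separating condition (d) follows from the computation already recorded in the proof of \propref{tilde j}: for $u\in{\rm PIN}(B)$ one has $\rho^{-1}(u)=\Delta_{S(u)}$, and \lemref{Q} shows that $Q(\Delta_{S(u)})$ is supported on the set of trivial germs, so $P(u)(x)=0$ whenever the germ of $\varphi_u$ at $x$ is non-trivial.

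The main obstacle is the full-to-reduced passage in (ii)$\Rightarrow$(i): \propref{tilde j} yields an isomorphism only at the level of the dense $*$-algebras, and it is the global identity $P\circ\tilde\pi=Q\circ\lambda$, combined with the faithfulness of \emph{both} $P$ and $Q$, that upgrades this to an isomorphism of the reduced C*-algebra. Care is needed to check that $P$ is bounded (so that the identity extends by continuity) and that the induced map surjects onto all of $A$ rather than onto a proper subalgebra.
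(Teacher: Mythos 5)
Your proof is correct, and in the substantive direction (ii)$\Rightarrow$(i) it takes a genuinely different route from the paper's. The direction (i)$\Rightarrow$(ii) is identical in both: set $P=Q\circ\tilde\pi^{-1}$ and transport (a)--(d) from \lemref{Q2} and \propref{tilde j}. For (ii)$\Rightarrow$(i), the paper, after invoking \propref{tilde j} exactly as you do, runs the KSGNS construction on the two completely positive pairs $(C_c(H,\Sigma),Q)$ and $({\rm PIN}(B)B_c,P)$: it identifies the Hilbert modules $L^2(Q)$ and $L^2(P)$, recognizes $C^*_{\rm red}(H,\Sigma)$ as the closure of $\rho_Q(C_c(H,\Sigma))$ via \cite[Theorem 2.10]{ks:regular}, and gets the norm equality $\|f\|_{\rm red}=\|\tilde\pi(f)\|$ because faithfulness of $P$ makes $\rho_P$ isometric on $A$. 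You instead work upstairs on the full algebra $C^*(H,\Sigma)$ (available from \propref{tilde pi}), use the canonical surjection $\lambda$ onto the reduced algebra, and compare kernels: faithfulness of $P$ gives $\ker\lambda\subseteq\ker\tilde\pi$, faithfulness of $Q$ on $C^*_{\rm red}(H,\Sigma)$ gives injectivity of the induced map $\rho$, and density of ${\rm PIN}(B)B_c$ in $A$ gives surjectivity. This avoids Hilbert C*-modules and the Khoshkam--Skandalis theorem altogether; the paper's route, in exchange, exhibits the isometry between the reduced norm and the norm of $A$ directly rather than deducing it a posteriori from two injectivity statements.

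One step you should not leave as a bare assertion (the paper is equally terse here, hiding it in the phrase ``the pairs $(C_c(H,\Sigma),Q)$ and $({\rm PIN}(B)B_c,P)$ are isomorphic''): the identity $P\circ\tilde\pi=Q\circ\lambda$ on $C_c(H,\Sigma)$, on which both of your kernel comparisons rest. It is not a formal consequence of the \emph{statement} of \propref{tilde j}; it comes from its \emph{proof}, where the inverse of $\tilde\pi|_{C_c(H,\Sigma)}$ is $\tilde\j(a)=\hat a$ with $\hat a(v,x)=P(v^*a)(x)$. Since the unit $(x,1)$ of $\Sigma$ is the class $[e,x]$ of any projection $e\in B$ with $e(x)=1$, one has
\[Q(\tilde\j(a))(x)=\hat a(e,x)=P(ea)(x)=P(a)(x),\]
where the last equality is checked by writing $a=wd$ with $w\in{\rm PIN}(B)$, $d\in B_c$, so that $ea=w(w^*ew)d$ and $P(ea)(x)=P(w)(x)\,e(\varphi_w(x))\,d(x)$: if $P(w)(x)\neq 0$, the separating condition (d) forces $\varphi_w(x)=x$, hence $e(\varphi_w(x))=1$, while if $P(w)(x)=0$ both sides vanish. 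With this verification inserted, your argument is complete.
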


\begin{proof}  Assume (i). We define $P=Q\circ\tilde\pi^{-1}$, where $Q$ is as in \lemref{Q2}. Since $Q$ satisfies the properties (a) to (c), so does $P$. According to \propref{tilde j}, it also satisfies (d) (we recall that by assumption ${\rm PIN}(B)={\rm Bis}(\Sigma)$). Assume (ii). The restriction of $P$ to ${\rm PIN}(B)B_c$ satisfies the conditions of \propref{tilde j}. Therefore, $\tilde\pi$ is a $*$-isomorphism of $C_c(H,\Sigma)$ onto ${\rm PIN}(B)B_c$. The pairs $(C_c(H,\Sigma), Q)$ and $({\rm PIN}(B)B_c, P)$ are isomorphic, where we view $Q$ and $P$ as completely positive linear maps into a common abelian C*-algebra $\widetilde B$. The KSGNS construction provides isomorphic $\widetilde B$-Hilbert modules $L^2(Q)$ and $L^2(P)$ and conjugate representations $\rho_Q$ and $\rho_P$ of the $*$-algebras $C_c(H,\Sigma)$ and ${\rm PIN}(B)B_c$ on these Hilbert modules. By construction (cf. \cite[Theorem 2.10]{ks:regular}), the reduced C*-algebra $C^*_{\rm red}(H,\Sigma)$ is the closure of $\rho_Q(C_c(H,\Sigma))$. The representation $\rho_P$ extends to $A$ by density of ${\rm PIN}(B)B_c$. Since $P$ is faithful, $\rho_P$ is isometric. Thus we have the equality of the norms $\|f\|_{\rm red}$ and $\|\tilde\pi(f)\|$ for $f\in C_c(H,\Sigma)$. Therefore $\tilde\pi$ extends to an isomorphism of $C^*_{\rm red}(H,\Sigma)$ onto $A$.\end{proof}
Note that, by construction, we have the equality ${\rm PIN}(B)={\rm Bis}(\Sigma)$. We shall prove in \propref{char2} that this equality implies that $B$ is maximal abelian. Thus, if these equivalent conditions are satisfied, $H$ is the Weyl groupoid, i.e. the groupoid of germs of the ample pseudogroup $\G(B)$. \corref{char} is an algebraic characterization of the C*-algebras which can be expressed as the reduced C*-algebra of a twisted Boolean groupoid satisfying the local bisection hypothesis of \cite[Definition 4.1]{accclmrss}. It is a mild extension to non-Hausdorff Boolean groupoids of \cite[Theorem 5.9]{ren:cartan}.

\vskip2mm
Let us complete our study of the inclusion of $B=C_0(G^{(0)})$ into $A=C^*_{\rm red}(G,\Sigma)$, where $(G,\Sigma)$ is a twisted Boolean groupoid.

\begin{lem} Let $(G,\Sigma)$ be a twisted Boolean groupoid.
\begin{enumerate}
 \item if $S$ is a compact continuous bisection of $\Sigma$, then $\Delta_S$ is a partial isometry normalizer of $B$ in $A$;
 \item the map $\Delta: S\mapsto \Delta_S$ is an injective Boolean inverse semigroup morphism of ${\rm Bis}(\Sigma)$ into ${\rm PIN}(B)$;
 \item if $G$ is effective, a partial isometry normalizer $u\in{\rm PIN}(B)$ is in the image of $\Delta$ if and only if its strict support ${\rm supp}'(u)=\{\gamma\in G: |u|(\gamma)\not=0\}$ is an open subset of $G$.
\end{enumerate}
 \end{lem}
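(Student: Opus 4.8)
The plan is to handle the three items in order, leaning on \lemref{map u} for (i) and (ii) and reserving the groupoid geometry for (iii). For (i), I would first note that $\Delta_S$ is a partial isometry because \lemref{map u} gives $\Delta_S\Delta_S^*\Delta_S=\Delta_{SS^{-1}S}=\Delta_S$ via the inverse-semigroup identity $SS^{-1}S=S$ in ${\rm Bis}(\Sigma)$. Its source and range projections are $\Delta_S^*\Delta_S=\Delta_{S^{-1}S}$ and $\Delta_S\Delta_S^*=\Delta_{SS^{-1}}$; since $S^{-1}S$ and $SS^{-1}$ are idempotent bisections, i.e. compact open subsets of $G^{(0)}$, the last clause of \lemref{map u} identifies them with the images $\widetilde{\mathbf 1_{d(S)}}$ and $\widetilde{\mathbf 1_{r(S)}}$ of characteristic functions, hence elements of $B=C_0(G^{(0)})$. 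For the normalizing property I would compute, for a compact open $U\subseteq d(S)$, that $\Delta_S\,\widetilde{\mathbf 1_U}\,\Delta_S^*=\Delta_{S\mathbf 1_US^{-1}}=\widetilde{\mathbf 1_{\varphi_S(U)}}\in B$, where $\varphi_S$ is the partial homeomorphism of $G^{(0)}$ attached to $S$. As $G^{(0)}$ is a Boolean space, these characteristic functions span a dense subspace of $B$, and conjugation by the partial isometry $\Delta_S$ is norm continuous, so $\Delta_SB\Delta_S^*\subseteq B$; the symmetric computation gives $\Delta_S^*B\Delta_S\subseteq B$, whence $\Delta_S\in{\rm PIN}(B)$.

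For (ii), \lemref{map u} already gives $\Delta_{ST}=\Delta_S*\Delta_T$ and $\Delta_{S^{-1}}=\Delta_S^*$, so $\Delta$ is a $*$-homomorphism of inverse semigroups. To see that it respects the Boolean structure, I would observe that an orthogonal pair $(S,T)$ maps to an orthogonal pair, since $\Delta_S^*\Delta_T=\Delta_{S^{-1}T}=\Delta_0=0$ and likewise $\Delta_S\Delta_T^*=0$; moreover $\Delta_{S\vee T}=\Delta_S+\Delta_T$ because the two functions have disjoint supports, and $\Delta_S+\Delta_T$ is precisely the join $\Delta_S\vee\Delta_T$ of an orthogonal pair of partial isometries in $A$ (as in the proof of \propref{tbisg}). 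Injectivity is then immediate from the remark that $S=\{\sigma\in\Sigma:\Delta_S(\sigma)=1\}$, so $\Delta_S$ recovers $S$.

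The substance is in (iii). The direct implication is trivial: if $u=\Delta_S$ then $|u|=\mathbf 1_{p(S)}$, whose support ${\rm supp}'(u)=p(S)$ is compact open, hence open. For the converse, write $u^*u=\mathbf 1_D$ and $uu^*=\mathbf 1_R$ with $D,R$ compact open in $G^{(0)}$, and set $V={\rm supp}'(u)$, assumed open. Viewing $u$ as a bounded homogeneous function and using that the product of reduced-algebra elements is given by convolution, I would translate the normalizer relations $u\,\mathbf 1_U\,u^*=\mathbf 1_{\varphi_u(U)}$ (for compact open $U\subseteq D$) into the fibrewise identities $\sum_{\alpha\in G^x,\,d(\alpha)\in U}|u(\alpha)|^2=\mathbf 1_{\varphi_u(U)}(x)\in\{0,1\}$, together with the mirror identities obtained by exchanging the roles of $d$ and $r$. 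Since $G^{(0)}$ is Hausdorff, two distinct source (resp. range) values of elements of $V$ lying over one point could be separated by disjoint compact open sets, and the $\{0,1\}$ constraint applied to their union would force the value $2$; this contradiction shows that all elements of $V$ over a fixed range point share a single source and all elements of $V$ over a fixed source point share a single range. Consequently, if $\gamma_1,\gamma_2\in V$ have a common source then they also have a common range, so $\gamma_1\gamma_2^{-1}$ lies in $\mathrm{Iso}(G)$; choosing open bisections $W_1\ni\gamma_1$, $W_2\ni\gamma_2$ inside $V$ with equal domains, the open bisection $W_1W_2^{-1}$ is contained in $\mathrm{Iso}(G)$ by the same fibrewise uniqueness, so effectiveness ($\mathrm{Int}\,\mathrm{Iso}(G)=G^{(0)}$) yields $\gamma_1\gamma_2^{-1}\in G^{(0)}$, i.e. $\gamma_1=\gamma_2$. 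Thus $V$ is an open bisection, and since $r$ restricts to a homeomorphism $V\to R$ it is also compact.

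To finish, I would trivialize the twist over $V$: as $V$ is a totally disconnected compact space the principal $\t$-bundle $p^{-1}(V)\to V$ admits a continuous section, giving a compact continuous bisection $S_0$ of $\Sigma$ with $p(S_0)=V$ and, by (i), $\Delta_{S_0}\in{\rm PIN}(B)$ with the same projections $\mathbf 1_D,\mathbf 1_R$. Then $w:=\Delta_{S_0}^*u$ is supported on $V^{-1}V\subseteq G^{(0)}$ and satisfies $w^*w=ww^*=\mathbf 1_D$; being supported on units it commutes with $B$, and since $G$ is effective $B$ is maximal abelian, so $w\in B$. A partial isometry in the abelian algebra $B$ with support projection $\mathbf 1_D$ is a continuous $\t$-valued function on $D$, i.e. an element $w=\Delta_{S_w}$ of ${\mathcal T}(G^{(0)})$; \lemref{map u} then gives $u=\Delta_{S_0}w=\Delta_{S_0}\Delta_{S_w}=\Delta_{S_0S_w}$, exhibiting $u$ in the image of $\Delta$. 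I expect the genuine obstacle to be this converse in (iii), and within it the step that $V$ is a bisection: making the fibrewise $\{0,1\}$ identities rigorous for reduced-algebra (rather than merely $C_c$) elements and combining them with effectiveness while staying valid for non-Hausdorff $G$. The passage from ``$w$ is supported on units'' to ``$w\in B$'' also quietly invokes that effectiveness renders $B$ maximal abelian, which I would cite rather than reprove.
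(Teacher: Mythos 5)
Your items (i) and (ii) are correct, and your route to the heart of (iii) --- showing that the open strict support $V$ of $u$ is a compact open bisection --- is also correct, though by a different mechanism than the paper's: where you use the $\{0,1\}$-valued fibrewise sums $\sum_{\alpha\in G^x,\,d(\alpha)\in U}|u(\alpha)|^2=\mathbf 1_{\varphi_u(U)}(x)$ together with a separation argument, the paper writes $b(\varphi_u(x))=\int|u(\tau)|^2\,b\circ r(\tau)\,d\lambda_x(\tau')$ and invokes extremality of the pure state $\delta_{\varphi_u(x)}$ to conclude that $r(\tau)=\varphi_u(d(\tau))$ on the support; both arguments then use effectiveness in the same way (an open subset of the isotropy, namely $VV^{-1}$ and $V^{-1}V$, must lie in $G^{(0)}$), and both rest on the fact recorded in Section 1.3 that products in $C^*_{\rm red}(G,\Sigma)$, with elements viewed as functions, are computed by convolution.

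The genuine gap is your final step. You pass from ``$w=\Delta_{S_0}^*u$ is supported on units'' to ``$w\in B$'' by asserting that effectiveness of $G$ makes $B$ maximal abelian, and you say you would cite this rather than reprove it. That implication is exactly backwards relative to what the paper proves: \propref{char2} gives (maximal abelian) $\Rightarrow$ (effective), with the equivalence of the two holding \emph{only when $G$ is Hausdorff}; and the paper explicitly recalls, citing \cite{exe:nh}, that there exist effective non-Hausdorff \'etale groupoids for which $C_0(G^{(0)})$ is \emph{not} maximal abelian in $C^*_{\rm red}$. Since the lemma is stated for arbitrary twisted Boolean groupoids, and the surrounding text makes clear that handling the non-Hausdorff case is precisely the point of this part of the paper, your proof of the converse in (iii) is valid only under a Hausdorff hypothesis that the statement does not make. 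The paper's own conclusion never appeals to maximal abelianness: having lifted $V=S'$ to $S\in{\rm Bis}(\Sigma)$, it observes that $\Delta_{S^{-1}}*u$ has strict support $S'^{-1}S'\subset G^{(0)}$ and concludes directly that it equals an element $b\in{\rm PI}(B)$, so that $u=\Delta_Sb=\Delta_{Sb}$. To repair your argument you must justify exactly that membership --- that a partial isometry in $C^*_{\rm red}(G,\Sigma)$ whose strict support is a compact open subset of $G^{(0)}$ lies in ${\rm PI}(B)$ --- without the masa shortcut, since for non-Hausdorff $G$ the commutant of $B$ can be strictly larger than $B$.
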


\begin{proof}
 (i) It results from \lemref{map u} that $\Delta_S$ is a partial isometry with domain $d(S)$ and range $r(S)$ in $B$. By \cite[Proposition 4.7.(i)]{ren:cartan}, it normalizes $B$.\\
 \lemref{map u} shows that $\Delta$ is a morphism of $*$-semigroup. Its restriction to the Boolean algebra $\B$ of compact open subsets of $G^{(0)}$ is the usual embedding of $\B$ into the Boolean algebra of projections of $B$. If $\Delta_S$ belongs to $\B$, then $S$ is contained in $G^{(0)}$. This gives (ii). The proof of (iii)  is essentially that of \cite[Proposition 4.7]{ren:cartan} which we reproduce here. Let $u\in{\rm PIN}(B)$ with open strict support.  As earlier $\varphi_u: {\rm dom}(u)\to {\rm ran}(u)$ denotes the partial homeomorphism of $G^{(0)}$ induced by the action of $u$ on $B$ by conjugation. We fix $x\in {\rm dom}(u)$. The equality
 \[b(\varphi_u(x))=\int |u(\tau)|^2 b\circ r(\tau) d\lambda_x(\tau')\] 
 holds for all $b\in B$. In other words, the pure state $\delta_{\varphi_u(x)}$ is expressed as a (possibly infinite) convex combination of pure states of $B$. This implies that $u(\tau)=0$ if $r(\tau)\not=\varphi_u(x)$. Denoting $S'=p(S)$ the image of $S$ in $G$, we have shown $S'\subset T$ where
 \[T=\{\gamma\in G: d(\gamma)\in{\rm dom}(u)\quad{\rm and}\quad r(\gamma)=\varphi_u\circ s(\gamma)\}.\]
 We deduce the inclusions $S'{S'}^{-1}\subset TT^{-1}\subset G'$. Since $S'{S'}^{-1}$ is open and $G$ is effective, it must be contained in $G^{(0)}$. By the same token, ${S'}^{-1}S'$ is also contained in $G^{(0)}$. This show that $S'$ is a bisection. It is open by assumption and also compact because homeomorphic to ${\rm dom}(u)$. To conclude, we pick $S\in{\rm Bis}(\Sigma)$ such that $p(S)=S'$. Then the strict support of $\Delta_{S^{-1}}*u$ is ${S'}^{-1}S'$. Therefore, there exists $b\in{\rm PI}(B)$ such that $u=\Delta_S b=\Delta_{Sb}$. Conversely, if $S$ is a continuous compact bisection of $\Sigma$, the strict support of $\Delta_S$, which is the image $p(S)$ of $S$ in $G$, is a compact open subset of $G$.
\end{proof}

 Along the lines of  \cite[Theorem 3.1]{abcclmr} which  gives a similar result in the case of a locally compact Hausdorff \'etale groupoid and where the normalizer $N(B)$ is used instead of the partial isometry normalizer ${\rm PIN}(B)$, we have:

\begin{prop}\label{char2} (cf. \cite[Theorem 3.1]{abcclmr}) Let $(G,\Sigma)$ be a twisted Boolean groupoid. Consider the inclusion of $B=C_0(G^{(0)})$ into $A=C^*_{\rm red}(G,\Sigma)$ and the four conditions:
\begin{enumerate}
 \item ${\rm PIN}(B)={\rm Bis}(\Sigma)$;  
 \item the restriction map $Q$ of \lemref{Q2} is separating;
  \item $B$ is maximal abelian;
  \item $G$ is effective.
\end{enumerate}
Then, the conditions (i) and (ii) are equivalent and imply (iii), which in turn implies (iv). If $G$ is Hausdorff, these four conditions are equivalent.
 \end{prop}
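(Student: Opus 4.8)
The plan is to prove the five implications $(i)\Rightarrow(ii)$, $(ii)\Rightarrow(i)$, $(ii)\Rightarrow(iii)$, $(iii)\Rightarrow(iv)$ and, under the extra assumption that $G$ is Hausdorff, $(iv)\Rightarrow(i)$; together these yield the stated pattern and the full equivalence in the Hausdorff case. Throughout I view elements of $A=C^*_{\rm red}(G,\Sigma)$ as bounded homogeneous functions on $\Sigma$, so that $Q(u)(x)=u(x,1)$, and I use the preceding lemma, which exhibits $\Delta\colon{\rm Bis}(\Sigma)\to{\rm PIN}(B)$ as an injective morphism; condition (i) is precisely the surjectivity of $\Delta$. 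The computation in that lemma shows that for $u\in{\rm PIN}(B)$ one has ${\rm supp}'(u)\subseteq\{\gamma: r(\gamma)=\varphi_u(d(\gamma))\}$, and in particular $Q(u)(x)\neq0$ forces $\varphi_u(x)=x$. I also use the description of the separating condition as: $[[u,x]]=x$ (notation of \propref{isgtwist to gpdtwist}) iff $u$ agrees with an element of ${\rm PI}(B)$ on some clopen neighbourhood of $x$.

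For $(i)\Rightarrow(ii)$ I argue directly: if $u=\Delta_S$ then $Q(\Delta_S)(x)\neq0$ exactly when the unit $x$ lies in $p(S)$, and since $G^{(0)}$ is open in the \'etale groupoid $G$ a bisection containing a unit must coincide with $G^{(0)}$ on a neighbourhood of that unit, so the germ $[[u,x]]$ is trivial; hence $Q$ is separating. The substantial converse $(ii)\Rightarrow(i)$ is the technical core. Fix $u\in{\rm PIN}(B)$ and, for each $\gamma\in{\rm supp}'(u)$ with $x=d(\gamma)$, choose a compact open bisection $V\ni\gamma$ with lift $S$ and set $v=\Delta_S^{*}u\in{\rm PIN}(B)$. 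Since $V$ is a bisection the convolution defining $v$ involves no cancellation, so $v(x,1)$ is, up to a phase, the value of $u$ at $\gamma$, hence nonzero, while $\varphi_v(x)=x$; separating then forces $[[v,x]]=x$, so $v$ agrees with an element of ${\rm PI}(B)$ on a clopen $U\ni x$, whence $u$ agrees with $\Delta_S$ times a phase on $d^{-1}(U)$. This shows ${\rm supp}'(u)$ is \emph{open} near $\gamma$, and, applied to two putative elements $\gamma\neq\gamma'$ over the same $x$ (whose ratio would sit in ${\rm supp}'(v)$ as a nontrivial isotropy germ over $x\in U$), it rules out a second sheet; thus ${\rm supp}'(u)$ is an open bisection, compact because homeomorphic to ${\rm dom}(u)$, and $u=\Delta_S b$ with $b\in{\rm PI}(B)$, giving $u\in\Delta({\rm Bis}(\Sigma))$.

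For $(ii)\Rightarrow(iii)$ I argue by contraposition. If $B$ is not maximal abelian there is a self-adjoint $a\in(B'\cap A)\setminus B$; commuting with $B$ makes $a$ supported on the isotropy bundle, and as $B$ is exactly the part of $A$ supported on $G^{(0)}$ there is some $\gamma_0\in{\rm supp}'(a)$ with $d(\gamma_0)=r(\gamma_0)=x_0$ and $\gamma_0\notin G^{(0)}$. Cutting by $p=\mathbf 1_U\in B$ with $x_0\in U$ to work in the unital corner $pAp$, the unitaries $v_t=\exp(it\,pap)$ lie in ${\rm PIN}(B)$ (they commute with $B$ and $d(v_t)=p\in B$); computing via the convolution product, $v_t(\gamma_0)=it\,a(\gamma_0)+O(t^2)\neq0$ for small $t\neq0$, so $[[v_t,x_0]]$ is nontrivial, while $Q(v_t)(x_0)\to1$, contradicting separating. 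For $(iii)\Rightarrow(iv)$, again by contraposition: if $G$ is not effective there is a compact open bisection $V\subseteq{\rm Iso}(G)$ with $V\not\subseteq G^{(0)}$, and a lift $S$ gives $\Delta_S\in{\rm PIN}(B)$ which commutes with all of $B$ (because $\varphi_V={\rm id}$ on $d(V)$) yet lies outside $B$, so $B$ is not maximal abelian.

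Finally, when $G$ is Hausdorff every element of $A$ is a continuous function on $\Sigma$, so every normalizer has \emph{open} strict support; combined with effectiveness this puts us in the hypothesis of part (iii) of the preceding lemma, which identifies each normalizer with some $\Delta_S$, yielding (i) and closing the cycle. I expect the main obstacle to be $(ii)\Rightarrow(i)$: in the non-Hausdorff case reduced-algebra elements are merely bounded, not continuous, so the openness and single-sheetedness of ${\rm supp}'(u)$ cannot be read off from continuity and must be wrung out of the separating property by the translation-by-a-bisection trick above. This is exactly the place where the Hausdorff hypothesis is otherwise used, and it accounts for why only $(iii)\Rightarrow(iv)$, and not its converse, survives in general.
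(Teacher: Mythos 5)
Your treatment of (i)$\Leftrightarrow$(ii) is essentially the paper's own argument (translate $u$ by the lift of a bisection through a point of ${\rm supp}'(u)$, apply the separating property, then patch), your (iii)$\Rightarrow$(iv) is the paper's contrapositive, and closing the Hausdorff cycle via (iv)$\Rightarrow$(i) --- continuity of reduced-algebra elements gives open strict support, then part (iii) of the preceding lemma applies --- is a nice, more self-contained alternative to the paper's appeal to \cite{ren:approach}. The genuine gap is in your proof of (ii)$\Rightarrow$(iii). It rests on the assertion that ``$B$ is exactly the part of $A$ supported on $G^{(0)}$'', i.e.\ that any $a\in C^*_{\rm red}(G,\Sigma)$ whose strict support is contained in $G^{(0)}$ already lies in $C_0(G^{(0)})$. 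That is true when $G$ is Hausdorff, but in the Hausdorff case you do not need this implication at all; in the non-Hausdorff case (the only one at issue) it is unjustified and is precisely the delicate point. An element supported on $G^{(0)}$ is a uniform limit of restrictions $Q(f_n)$ with $f_n\in C_c(G,\Sigma)$, and \lemref{Q}(vi) shows such restrictions need not be continuous when $G$ is not Hausdorff; nothing forces the limit to be continuous, i.e.\ to lie in $B$. So if the self-adjoint $a\in(B'\cap A)\setminus B$ you start from happens to be supported on $G^{(0)}$, no $\gamma_0$ exists and the exponential construction has nothing to act on. This non-Hausdorff pathology (behind Exel's examples, cited right after the proposition) is exactly what forces the paper to prove the implication toward (iii) --- there in the form (i)$\Rightarrow$(iii) --- by the long reduction to twisted abelian group bundles, Gelfand theory and the Tietze extension theorem, rather than by a soft support argument.

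The gap is repairable without the group-bundle machinery, precisely because you contradict (ii) rather than (i): treat the missing case separately. If the chosen $a$ is supported on $G^{(0)}$, form, as the paper does, $u=ae+i\sqrt{e-a^2e}$ with $e\in\B$ a projection such that $ae\notin B$; then $u\in{\rm PIN}(B)\cap B'$, $u\notin B$, $u$ is again supported on $G^{(0)}$ (polynomials and norm limits of unit-supported elements are unit-supported), and $|Q(u)|=1$ on ${\rm supp}(e)$, since $u^**u=e$ and only the unit of each fibre contributes to $Q(u^**u)$ when $u$ is unit-supported. If (ii) held, then $[[u,x]]=x$ for every $x\in{\rm supp}(e)$, so $u$ agrees with elements of ${\rm PI}(B)$ on compact open sets covering ${\rm supp}(e)$; extracting a finite cover, disjointifying it in $\B$ and summing the orthogonal pieces gives $u\in B$, a contradiction. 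With this case added, your argument is complete, and your $v_t=\exp(it\,pap)$ trick for the other case is a genuinely simpler route than the paper's. One last point worth recording: both your argument and the paper's own proof of (ii)$\Rightarrow$(i) require reading ``separating'' as condition (c) of \propref{tilde j}, namely $[[u,x]]\ne x\Rightarrow Q(u)(x)=0$, as you state at the outset; under the weaker literal reading via germs of $\varphi_u$, your $v_t$, which commutes with $B$, has trivial germ everywhere and yields no contradiction at all.
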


\begin{proof} Let $(\G,{\mathcal S})$ be the twisted ample semigroup of $(G,\Sigma)$. It is represented in $A$ through the map $S\mapsto \Delta_S$. The kernel ${\mathcal T}(G^{(0)})$ of the twist  is sent onto ${\rm PI}(B)$. \\
(i)$\Rightarrow$(ii) Let $u\in {\rm PIN}(B)$. By assumption, there exists $S\in{\mathcal S}$ such that $u=\Delta_S$. If $Q(u)(x)\not=0$, then $x$ belongs to the open subset $p(S)\cap r(S)$ of $G^{(0)}$. There exists a compact open set $V$ containing $x$ and contained in $p(S)\cap r(S)$. Since $p(S_V)$ is contained in $G^{(0)}$, the germ of $\varphi_S$ at $x$ is trivial.\\
  (ii)$\Rightarrow$(i) In the following, the product in $A$ is written $ab$.  Let $u\in {\rm PIN}(B)$ and $\sigma\in\Sigma$ such that $u(\sigma)\not=0$. There exists $S\in {\mathcal S}$ containing $\sigma$. Then, $x=r(\sigma)$ satisfies
  \[Q(u\Delta_{S^{-1}})(x)=u\Delta_{S^{-1}}(x)=u(\sigma)\not=0.\]
  Therefore, there exists $b\in {\rm PI}(B)$ such that $b(x)\not=0$ and $(u\Delta_{S^{-1}})|b|=b$.
  Replacing $v=\Delta_S$ by $|b|v$, we obtain  $uv^*=b$ and $ud(v)=bv$, which can be written $ud(w)=w$, where $w=bv$. We have shown that for all $\sigma$ such that $u(\sigma)\not=0$, there exists $S\in{\mathcal S}$ such that $d(\sigma)\in d(S)$ and $ud(S)=\Delta_S$.  The family $(d(S))$ is an open cover of the support of the projection $u^*u$ in $G^{(0)}$. We extract a finite cover $(d(S_i))$. The family $(S_i)$ of the Boolean inverse semigroup ${\mathcal S}$ is compatible in the sense that, for all $i,j$, $S_i^{-1}S_j$ is an idempotent. Since $\mathcal S$ is a Boolean inverse semigroup, there exists $S=\vee_iS_i$ in $\mathcal S$ such that $Sd(S_i)=S_i$ for all $i$. Then $u=\Delta_S$.\\
  (i)$\Rightarrow$(iii) We assume that $B$ is strictly contained in its commutant $B'$; we will show the existence of $u$ in ${\rm PIN}(B)$ which is not in the image of $\Delta$. There exists $a\in B'\setminus B$ such that $a=a^*$ and $\|a\|\le 1$. Since $B$ admits an approximate unit of projections, there exists a projection $e\in B$ such that $ae\not\in B$. Then
$$u=ae+i\sqrt{e-a^2e}$$
satisfies $u^*u=uu^*=e$ and belongs to $B'$. Therefore, it belongs to ${\rm PIN}(B)$. Since $ae=\displaystyle{u+u^*\over 2}$, $u\notin B$. If $u$ is not of the form $\Delta_S$ for some $S\in{\rm Bis}(\Sigma)$, we are done. Therefore, we assume that $u=\Delta_S$ where $S\in{\rm Bis}(\Sigma)$. Since $u\in B'$, then according to \cite[Proposition II.4.7.(i)]{ren:approach}, $p(S)$ is contained in the isotropy group bundle $G'$ and even in its interior  because it is an open subset of $G$. We then consider the open subgroupoid $H=\bigcup_{k\in\Z}p(S)^k$, where $p(S)^0=Y$ is the support of $e$, endowed with the restriction of the twist $\Lambda=\Sigma_{|H}$. It is known (see \cite[Lemma 3.4]{bl:cartan II}) that $C^*_{\rm red}(H,\Lambda)$ embeds isometrically in $C^*_{\rm red}(G,\Sigma)$. This reduces the problem to the twisted group bundle $(H,\Lambda)$: indeed, a partial isometry of $C^*_{\rm red}(H,\Lambda)$ which normalizes $Be$ is an element of ${\rm PIN}(B)$; if it were given by a compact continuous bisection of $\Sigma$, this bisection would lie in $\Lambda$. Since $\Lambda$ is an abelian group bundle over $Y$,  $C^*_{\rm red}(H,\Lambda)$ is commutative and isomorphic to the C*-algebra $C(Z)$ of continuous functions on its spectrum $Z$. The embedding of $C(Y)$ into $C(Z)$ gives a continuous surjective map $q:Z\to Y$. Since $u\notin B$, there exists $y\in Y$ such that $H(y)\not=\{y\}$. The map sending an element of $C_c(H,\Lambda)$ to its restriction  to $\Lambda(y)$ extends to a $*$-homomorphism from $C^*_{\rm red}(H,\Lambda)$ onto $C^*_{\rm red}(H(y),\Lambda(y))$. Via the Gelfand transform, this map restricts $f\in C(Z)$ to $q^{-1}(y)$. Since the group $\Lambda(y)$ is abelian, the twist $(H(y),\Lambda(y))$ is trivial (\cite[Lemma 6.3]{ren:twisted extensions}). A trivialization gives an isomorphism of $C^*_{\rm red}(H(y),\Lambda(y))$ onto  $C^*_{\rm red}(H(y))$. It is not difficult to see via the Gelfand transform that the C*-algebra of a non-trivial cyclic group $\Z$ or $\Z/p\Z$ contains unitaries which are not multiples of the canonical unitaries. One then obtains unitaries in $C_{\rm red}^*(H(y),\Lambda(y))$ which are not of the form $\Delta_{\{\sigma\}}$ for some $\sigma\in\Lambda(y)$. We view such a unitary as a continuous complex-valued function $f$ on $q^{-1}(y)$ of modulus one. By the Tietze extension theorem, we can extend this function to a continuous function $g$ of modulus one on $Z$. This gives a unitary in $C^*_{\rm red}(H,\Lambda)$ which is not of the form $\Delta_T$ for some $T\in{\rm Bis}(\Lambda)$.\\
 (iii)$\Rightarrow$(iv) A bisection $S\in{\mathcal S}$ is a centralizer if and only if $\Delta_S$ belongs to the commutant $B'$ of $B$. Thus, if $B'=B$, this means that $\Delta_S$ belongs to ${\rm PI}(B)$ or, equivalently, that $S$ belongs to ${\mathcal T}(G^{(0)})$. Therefore $\G$ is fundamental (this means that $\G$ is a pseudogroup of partial homeomorphisms) and $G$ is effective.\\
 Suppose now that $G$ is Hausdorff. The equivalence of (iii) and (iv) is essentially established in \cite[Proposition II.4.7 (ii)]{ren:approach}. The proof that (iii) implies (ii) is identical to the proof of \cite[Lemma II.5.2]{ren:approach}.
  \end{proof}
  
In \cite{exe:nh}, Exel give examples of effective non-Hausdorff \'etale groupoids which do not satisfy (i) nor (iii). On the other hand, there exist non-Hausdorff Boolean groupoids which satisfy (iii). I do not know if the implication (i) $\Rightarrow$ (iii) is strict.\\

\vskip2mm
 We turn now to the context of von Neumann algebras. When $B$ is a commutative W*-algebra, or more generally a commutative AW*-algebra, its Boolean algebra of projections is complete. Correspondingly, its spectrum is a Stonian space, as defined below.

\begin{defn} A {\it Stonian space} is a topological space which is compact, Hausdorff and extremally disconnected.  \end{defn}
 
 \begin{defn} A {\it Stonian groupoid} is an \'etale groupoid whose unit space is a Stonian space. We shall say that a twisted groupoid $(G,\Sigma)$ is Stonian if $G$ is Stonian.
 \end{defn}

\begin{prop}\label{principal and Stonian}  Let $B$ be an abelian AW*-subalgebra of an AW*-algebra $A$. Then the groupoid of germs of the pseudogroup $\G(B)$ is principal and Stonian.
\end{prop}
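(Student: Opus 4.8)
The statement has two essentially independent parts, and I would dispose of the Stonian part first. Since $B$ is an abelian AW*-algebra, its lattice of projections $\B$ is a \emph{complete} Boolean algebra, so its dual space $X$ — which is precisely the unit space of the germ groupoid $G={\rm Germ}(\G(B))$ — is compact, Hausdorff and extremally disconnected, i.e.\ Stonian. As $G$ is \'etale by construction, this already gives that $G$ is a Stonian groupoid. For the principal part, recall that the isotropy of a groupoid of germs at $x$ consists of the germs $[\varphi_u,x]$ with $\varphi_u(x)=x$, and that $[\varphi_u,x]$ is a unit exactly when $\varphi_u$ is the identity on a neighbourhood of $x$. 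Hence $G$ is principal if and only if, for every $u\in{\rm PIN}(B)$, every fixed point of the partial homeomorphism $\varphi_u$ lies in the interior of its fixed-point set; equivalently, the fixed-point set ${\rm Fix}(\varphi_u)$ is clopen. This is the single fact I would establish.

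The first step is to isolate the locus on which $\varphi_u$ is locally the identity. Using completeness of $\B$, I would set $e=\bigvee\{f\in\B: f\le u^*u,\ \varphi_u|_{\hat f}={\rm id}\}$. This family is upward directed, so $\hat e$ is the closure of the union of the clopen sets $\hat f$; on that dense union $\varphi_u$ agrees with the identity, and since $\{x\in\hat e:\varphi_u(x)=x\}$ is closed and $X$ is Hausdorff, $\varphi_u$ is the identity on all of $\hat e$. Thus $\hat e={\rm Int}\,{\rm Fix}(\varphi_u)$ is the largest clopen set on which $\varphi_u$ acts trivially. Writing $q=u^*u-e$ and $v=uq$, it remains to prove that the ``free'' part $\varphi_v$ has no fixed point at all, for then ${\rm Fix}(\varphi_u)=\hat e$ is clopen and we are done.

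The free part is characterised by the property that no nonzero subprojection of $q$ is fixed by $\varphi_v$ locally: for every nonzero $g\le q$ there is a nonzero $g'\le g$ with $g'\wedge vg'v^*=0$, i.e.\ $\hat{g'}\cap\varphi_v(\hat{g'})=\varnothing$. To rule out fixed points I would produce a partition $q=q_1\vee q_2\vee q_3$ into clopen pieces with $q_i\wedge vq_iv^*=0$ for each $i$, that is $\hat{q_i}\cap\varphi_v(\hat{q_i})=\varnothing$: any fixed point $x_0$ would lie in some $\hat{q_i}$, forcing $x_0=\varphi_v(x_0)\in\hat{q_i}\cap\varphi_v(\hat{q_i})$, a contradiction. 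Such a three-colouring of a freely acting Boolean (partial) automorphism is the classical ``$1=a_1\vee a_2\vee a_3$ with $a_i\wedge\theta(a_i)=0$'' lemma, and is exactly where completeness of $\B$ enters: one builds a clopen set $w$ maximal among those with $\hat w\cap\varphi_v(\hat w)=\varnothing$ (the supremum of a maximal family exists in the complete lattice $\B$), shows by freeness and maximality that $\hat w\cup\varphi_v(\hat w)$ is dense, and then splits the nowhere dense remainder off as the third colour, taking closures to keep all pieces clopen — here extremal disconnectedness is essential.

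I expect this colouring lemma to be the main obstacle: the delicate point is not the existence of a maximal wandering set but the verification that the maximal set and the exceptional remainder can be arranged to be clopen and genuinely disjoint from their images, so that three colours suffice uniformly. The combinatorial content is transparent in the model case $A=\mathcal B(\ell^2(S))$, $B=\ell^\infty(S)$, where each $\varphi_u$ is the extension $\hat\sigma$ of a permutation $\sigma$ of $S$: a proper three-colouring of the cycles of $\sigma$ shows directly that ${\rm Fix}(\hat\sigma)=\overline{{\rm Fix}(\sigma)}$ is clopen, since a fixed-point-free permutation admits no invariant ultrafilter. The AW*-proof is precisely the localisation of this combinatorics to the complete Boolean algebra $\B$.
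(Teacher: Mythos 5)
Your treatment of the Stonian half is the same as the paper's: $B$ is abelian AW* if and only if its spectrum is Stonian, and that spectrum is the unit space of the germ groupoid. For principality, however, you and the paper genuinely diverge. The paper disposes of it in two lines: it quotes \cite[Proposition 2.1]{kklru:essential}, which says that an \'etale groupoid with Stonian unit space has \emph{open} isotropy bundle, and combines this with the observation that a groupoid of germs of a pseudogroup is always effective, so the isotropy, being open, equals its interior, i.e.\ the unit space. Your proof is a self-contained unfolding of that citation: the reduction of principality to clopen-ness of every $\mathrm{Fix}(\varphi_u)$, the splitting $u^*u=e\vee q$ into a largest identity part and a locally free part, and the three-colouring of the free part together constitute a proof of Frol\'{\i}k's theorem (fixed-point sets of homeomorphisms of extremally disconnected spaces are clopen), which is precisely what the cited proposition rests on. Your route is longer, but it makes visible where completeness of $\B$ and extremal disconnectedness actually enter; the paper buys brevity at the price of a black box.

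There is, however, a genuine error inside your sketch of the colouring lemma. For a maximal clopen $w$ with $\hat w\cap\varphi_v(\hat w)=\varnothing$, it is \emph{not} true that $\hat w\cup\varphi_v(\hat w)$ is dense: for the cyclic shift on the three-point space $\{0,1,2\}$ the maximal moved clopen sets are the singletons, and $\{0\}\cup\{1\}$ misses the point $2$. Worse, if your density claim did hold, then (all sets in sight being clopen) the remainder would be empty and two colours would suffice, which the $3$-cycle shows is impossible; so the ``nowhere dense remainder'' endgame cannot be repaired as stated. What maximality plus local freeness actually gives is density of $\hat w\cup\varphi_v(\hat w)\cup\varphi_v^{-1}(\hat w)$ in $\hat q$: if a nonzero clopen $g$ avoided all three sets, a moved nonzero $g'\le g$ (local freeness) would make $w\vee g'$ moved, because the cross term $\hat g'\cap\varphi_v(\hat w)$ vanishes since $\hat g'$ avoids $\varphi_v(\hat w)$, and the cross term $\hat w\cap\varphi_v(\hat g')$ vanishes since $\hat g'$ avoids $\varphi_v^{-1}(\hat w)$ --- contradicting maximality. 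Since a dense clopen subset of the clopen set $\hat q$ is all of $\hat q$, these three clopen sets cover $\hat q$, and the colouring is: $q_1=w$; $q_2$ the clopen set $\varphi_v^{-1}(\hat w)\cap\hat q\setminus\hat w$, moved because $\varphi_v$ maps it into $\hat w$; and $q_3$ the clopen complement of $q_1\vee q_2$ in $q$, which by the covering property is contained in the clopen set $\varphi_v(\hat w)$, hence moved because $\varphi_v^{-1}$ maps it into $\hat w$, disjoint from $q_3$. With this repair your argument is complete and correct.
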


\begin{proof} It is known that an abelian C*-algebra $B$ is an AW*-algebra if and only if its spectrum is a Stonian space. According to \cite[Proposition 2.1]{kklru:essential}, the isotropy bundle of an \'etale groupoid whose unit space is a Stonian space is open. Since the groupoid of germs of a pseudogroup of partial homeomorphisms is always effective, meaning that the interior of its isotropy is reduced to its unit space, its isotropy bundle is reduced to its unit space, which is the definition of a groupoid being principal.
\end{proof}

\begin{defn} A {\it complete Boolean inverse semigroup} $\G$ is an inverse semigroup such that
\begin{enumerate}
 \item $\G^{(0)}$ is a complete Boolean algebra and
 \item every orthogonal family of $\G$ admits a join.
\end{enumerate}
 \end{defn}

\begin{prop}  Let $B$ be an abelian W*-subalgebra of a W*-algebra $A$.  The pseudogroup $\G(B)$ is a complete Boolean inverse semigroup.
 \end{prop}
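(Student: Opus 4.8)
The plan is to verify the two defining conditions of a complete Boolean inverse semigroup directly. Recall that $\G(B)^{(0)}$ is the Boolean algebra $\B$ of projections of $B$, with dual space the spectrum $X$ of $B$. Since $B$ is a W*-algebra its projections form a complete lattice, so $\B$ is a complete Boolean algebra; this is condition (i). It remains to produce, for every orthogonal family $(\varphi_i)_{i\in I}$ in $\G(B)$, a join $\bigvee_i\varphi_i$ in $\G(B)$. The idea is to lift the family to ${\rm PIN}(B)$, to form its sum inside the ambient W*-algebra $A$, and to read off the germ of that sum.

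First I would unwind orthogonality. Writing $D_i\subset X$ and $R_i\subset X$ for the (compact open) domain and range of $\varphi_i$, the relations $\varphi_i^*\varphi_j=\varphi_i\varphi_j^*=0$ for $i\ne j$ say exactly that the $D_i$ are pairwise disjoint and the $R_i$ are pairwise disjoint. Choosing for each $i$ a normalizer $u_i\in{\rm PIN}(B)$ with $\varphi_{u_i}=\varphi_i$, and letting $e_i=u_i^*u_i$, $f_i=u_iu_i^*$ be the corresponding projections, disjointness becomes $e_ie_j=0$ and $f_if_j=0$ for $i\ne j$. Using $u_i=u_ie_i=f_iu_i$ one checks at once that $u_iu_j^*=u_ie_ju_j^*=0$ and $u_i^*u_j=u_i^*f_if_ju_j=0$ for $i\ne j$, so $(u_i)$ is an orthogonal family of partial isometries in $A$ in the strong sense of orthogonal initial and final projections.

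The key step, and the only place where the W*-hypothesis is essential, is the summation. In a W*-algebra an orthogonal family of partial isometries is $\sigma$-strongly summable: the net of finite partial sums converges $\sigma$-strongly to a partial isometry $u=\sum_i u_i\in A$ with $u^*u=\sum_i e_i=\bigvee_i e_i$ and $uu^*=\sum_i f_i=\bigvee_i f_i$, the two sums of orthogonal projections being their suprema in the complete lattice $\B$. I would then verify $u\in{\rm PIN}(B)$. Its source and range projections lie in $\B\subset B$ by the formulas above. For $b\in B$, $\sigma$-strong continuity of multiplication on bounded sets together with the identity $u_ibu_j^*=u_i(e_ibe_j)u_j^*=0$ for $i\ne j$ gives $ubu^*=\sum_i u_ibu_i^*$, a $\sigma$-strongly convergent sum of elements of $B$ with orthogonal supports $f_i$; since $B$ is $\sigma$-weakly closed, $ubu^*\in B$, and symmetrically $u^*bu\in B$. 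Hence $u\in{\rm PIN}(B)$.

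Finally I would identify $\varphi_u$ with the desired join. From $ue_i=u_i$ one gets $\varphi_u|_{D_i}=\varphi_{u_i}=\varphi_i$, so $\varphi_u\ge\varphi_i$ for every $i$ in the natural order of $\G(B)$; thus $\varphi_u$ is an upper bound. If $\psi\in\G(B)$ dominates every $\varphi_i$, its clopen domain contains each $D_i$, hence contains $\overline{\bigcup_i D_i}$, which equals $\bigvee_i D_i={\rm dom}(\varphi_u)$ because $X$ is extremally disconnected; as $\psi$ and $\varphi_u$ agree on the dense subset $\bigcup_i D_i$ of this clopen set and are continuous there, they agree on it, so $\varphi_u\le\psi$. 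Therefore $\varphi_u=\bigvee_i\varphi_i$ in $\G(B)$, which establishes condition (ii). The one delicate point is the $\sigma$-strong summability of the lifted partial isometries together with the $\sigma$-weak closedness of $B$: this is precisely what fails for general C*-algebras and is what forces the passage from a Boolean to a complete Boolean inverse semigroup.
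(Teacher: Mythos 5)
Your proof is correct and follows essentially the same route as the paper: lift the orthogonal family $(\varphi_i)$ to an orthogonal family $(u_i)$ in ${\rm PIN}(B)$, form the sum as a weak (in your case $\sigma$-strong) limit in $A$ to obtain a partial isometry belonging to ${\rm PIN}(B)$, and project back to get the join in $\G(B)$. The paper's proof is simply a terser version of yours; the details you supply (summability of the lifted family, $\sigma$-weak closedness of $B$, identification of the image as the join) are exactly what it leaves implicit.
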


\begin{proof} If $(u_i)$ is an orthogonal family in ${\rm PIN}(B)$, $\sum_iu_i$, defined as a weak limit in $A$, exists, belongs to ${\rm PIN}(B)$ and is the join of the family. If $(\varphi_i)$ is an orthogonal family in $\G(B)$, we choose for all $i$ an element $u_i$ in ${\rm PIN}(B)$ which implements $\varphi_i$. The family $(u_i)$ is orthogonal and $p(\sum_iu_i)$ is the join of the family $(\varphi_i)$.
\end{proof}

\begin{rem} This result is still true when  $B$ be an abelian AW*-subalgebra of an AW*-algebra $A$. One can prove it directly by using
 \cite[Ch4, section 20, Theorem 1 (iii)]{ber:AW*} or by using the equivalence between complete Boolean inverse semigroups and Stonian groupoids.
\end{rem}

\begin{defn} When $B$ be an abelian W*-subalgebra of a W*-algebra $A$, the pseudogroup $\G(B)$ is called the full pseudogroup of the inclusion $B\subset A$.
\end{defn}

\vskip2mm

Let us recall the definition of a Cartan subalgebra of a W*-algebra (\cite[Definition 3.1]{fm:relations II}).

\begin{defn}\label{W*-Cartan}  A commutative W*-subalgebra $B$ of a W*-algebra $A$ is called a {\it Cartan subalgebra} of $A$ if
\begin{enumerate}
 \item $B$ is maximal abelian,
 \item ${\rm PIN}(B)$ generates $A$ as a W*-algebra,
 \item there exists a faithful normal conditional expectation $P$ of $A$ onto $B$. 
\end{enumerate}
We also say that $(A,B)$ is a W*-{\it Cartan pair}.
\end{defn}

The assumption of regularity made in \cite[Definition 3.1]{fm:relations II} is formally stronger since it involves the unitary normalizer instead of the partial isometry normalizer. Combined with \cite[Theorem 1]{fm:relations I}, \corref{fm} gives the equivalence of the two definitions. A direct proof of the equivalence can be found in \cite[pages 479-480]{cpz:bimodules}. Since $B$ is maximal abelian and contains the unit of $A$, we introduce the twisted Boolean inverse semigroup $(\G(B), {\rm PIN}(B))$ and its twisted groupoid of germs $(G(B),\Sigma(B))$. Note that according to \propref{principal and Stonian}, $G(B)$ is a principal groupoid.

\begin{defn} The twisted Boolean groupoid $(G(B),\Sigma(B))$ is called the {\it Weyl twist} of the W*-Cartan pair $(A,B)$. The inverse semigroup $\G(B)$ is called the {\it Weyl pseudogroup} of the Cartan pair.
 \end{defn}

\vskip2mm
We recall the construction of the W*-algebra of a twisted measure groupoid. We first need the notion of quasi-invariant measure.
In  \cite[I.3]{ren:approach}, a quasi-invariant measure for a locally compact groupoid $G$ with Haar system $\lambda$ is defined as a measure $\mu$ on $G^{(0)}$ such that the measure $\mu\circ \lambda$ on $G$ is equivalent to its inverse. For a Boolean groupoid one has the equivalent definition of a quasi-invariant measure (\cite[Proposition 3.2.2]{pat:gpd}):

\begin{defn} Let $G$ be a Boolean groupoid. A measure $\mu$ on $G^{(0)}$ is {\it quasi-invariant} if and only if for every compact open bisection $S$, the measures $(r_{|S})^*\mu$ and $(d_{|S})^*\mu$ are equivalent. A Boolean groupoid together with a quasi-invariant measure is called a {\it Boolean measure groupoid}.
\end{defn}

As in \cite[Section 4]{hah:regular}, the W*-algebra of a Boolean measure groupoid is defined as the von Neumann algebra of its regular representation:

\begin{defn}\label{vN} Let $(G,\Sigma)$ be a twisted Boolean groupoid and $\mu$ be a quasi-invariant probablity measure on $G^{(0)}$. We define $W^*(G,\Sigma,\mu)$ as the weak closure of ${\rm Reg}(C_c(G,\Sigma))$, where ${\rm Reg}$ is the GNS representation built on the state $\mu\circ Q$ of $C_c(G,\Sigma)$ and $Q$ is the conditional expectation $Q$ defined in \lemref{Q}.
\end{defn}

The Hilbert space of ${\rm Reg}$ is the completion $L^2(G,\Sigma, d^*\mu)$ of $C_c(G,\Sigma)$ for the scalar product
\[(f\,|\,g)=\mu\circ Q(f^**g)=\int \overline f g\, d(\mu\circ\tilde\lambda),\]
where $d^*\mu=\mu\circ\tilde\lambda$ and $\tilde\lambda$ is the system of counting measures on the fibres of $d:G\to G^{(0)}$. Then ${\rm Reg}(f)\xi=f*\xi$ for $f,\xi\in C_c(G,\Sigma)$.
\vskip2mm
A Cartan subalgebra $B$ of a W*-algebra $A$ is itself an abelian W*-algebra. We assume that it is countably decomposable and we fix a faithful normal state $\m$. This normal state extends to a probability measure $\mu$ on the spectrum of $B$. Since ${\rm PIN}(B)$ acts on $B$ by partial automorphisms, $\mu$ is quasi-invariant under $\G(B)$.
\vskip2mm
The following theorem which shows that every W*-Cartan pair is isomorphic to the W*-Cartan pair constructed from its Weyl twist is essentially the same as the abstract version of the Feldman-Moore theorem given in \cite{dfp:cartan}. An equivalent result, Theorem 12.3.8, is established in \cite{adp:II1} in the case of tracial von Neumann algebras.

\begin{thm}\label{abstract}
 Let $B$ be a Cartan subalgebra of a W*-algebra $A$ together with a faithful normal state $\m$ on $B$. Let $(G(B),\Sigma(B))$ be the Weyl twist of $(A,B)$. Let $\mu$ be the measure on the spectrum of $B$ extending $\m$. Then the $*$-isomorphism 
 \[\tilde\j: {\rm PIN}(B)B\to C_c(G(B),\Sigma(B))\] of \propref{tilde pi} exists and extends to an isomorphism 
 \[\Phi: A\to W^*(G(B),\Sigma(B),\mu)\] sending $B$ onto $C(G^{(0)})$.
\end{thm}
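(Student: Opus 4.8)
The plan is to prove the two assertions of the statement in turn: first that the $*$-homomorphism $\tilde\pi$ of \propref{tilde pi} restricts to a $*$-isomorphism $\tilde\j$ of $C_c(G(B),\Sigma(B))$ onto the $*$-algebra ${\rm PIN}(B)B$, and then that this $C_c$-level isomorphism extends to the required normal isomorphism $\Phi$ of the von Neumann algebra completions. Since $B$ is a commutative W*-algebra it is generated by its projections and its spectrum $X=G^{(0)}$ is (hyper)stonian, so $B_c=C_c(X)=C(X)=B$ and the hypotheses of \propref{tilde pi} and \propref{tilde j} are all in force.

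For the first step I would apply the equivalence of \propref{tilde j}: it is enough to exhibit a linear map $P\colon{\rm PIN}(B)B\to\F(X)$ obeying conditions (a), (b) and the separating condition (c). I take $P$ to be the faithful normal conditional expectation of $A$ onto $B$ provided by \defnref{W*-Cartan}(iii), composed with the Gelfand identification $B\cong C(X)\subset\F(X)$. Conditions (a) and (b) are immediate, since $P$ is a positive $B$-bimodule projection. The separating condition (c) is the substantive point: given $u\in{\rm PIN}(B)$ and $x\in{\rm dom}(u)$ whose germ is nontrivial, I must show $P(u)(x)=0$. Here \propref{principal and Stonian} intervenes decisively, for it tells me that $G(B)$ is principal, so a nontrivial germ at $x$ forces $\varphi_u(x)\ne x$. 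Exploiting that $X$ is Hausdorff and totally disconnected, I then choose a clopen set $V\ni x$ with $V\subset{\rm dom}(u)$ and $\varphi_u(V)\cap V=\emptyset$; putting $e={\bf 1}_V\in B$ and using $ueu^*={\bf 1}_{\varphi_u(V)}$ gives $eue=0$, whence $P(eue)={\bf 1}_VP(u){\bf 1}_V=0$, and evaluation at $x\in V$ yields $P(u)(x)=0$. With $P$ produced, \propref{tilde j} delivers $\tilde\j$; note moreover that $P\circ\tilde\pi=Q$, the restriction map of \lemref{Q}.

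For the second step I would identify two GNS representations. Set $\omega=\m\circ P$, a faithful normal state on $A$, with GNS triple $(\pi_\omega,H_\omega,\xi_\omega)$. Since $P\circ\tilde\pi=Q$ and $\m$ integrates to $\mu$, one has $\omega\circ\tilde\pi=\mu\circ Q$ on $C_c(G(B),\Sigma(B))$; hence $[f]\mapsto[\tilde\pi(f)]$ is isometric and extends to a unitary $U\colon L^2(G(B),\Sigma(B),d^*\mu)\to H_\omega$, and a short computation on convolution products gives the intertwining $\pi_\omega(\tilde\pi(f))\,U=U\,{\rm Reg}(f)$ for $f\in C_c$. Because $\omega$ is faithful and normal, $\pi_\omega$ is a normal faithful representation, hence a $\sigma$-weak homeomorphism onto the von Neumann algebra $\pi_\omega(A)$; and as ${\rm PIN}(B)$ generates $A$ as a W*-algebra (\defnref{W*-Cartan}(ii)), the $*$-subalgebra ${\rm PIN}(B)B$ is $\sigma$-weakly dense in $A$. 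Taking weak closures in $U\,{\rm Reg}(C_c)\,U^*=\pi_\omega({\rm PIN}(B)B)$ then yields $U\,W^*(G(B),\Sigma(B),\mu)\,U^*=\pi_\omega(A)$ (by \defnref{vN}), so $\Phi(a)\defequal U^*\pi_\omega(a)U$ is a normal $*$-isomorphism of $A$ onto $W^*(G(B),\Sigma(B),\mu)$ extending $\tilde\j$. Finally, since $B$ is $\sigma$-weakly closed and $\Phi$ is normal, $\Phi(B)$ is the weak closure of ${\rm Reg}(C_c(G^{(0)}))$, which is $C(G^{(0)})$ itself because $G^{(0)}$ is hyperstonian.

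I expect the one genuinely substantive step to be the separating property of $P$ in the first stage, since this is where maximal abelianness enters, through the principality of $G(B)$ from \propref{principal and Stonian}; the cautionary \propref{char2} and \corref{char} show that in the non-Hausdorff world this property is not automatic. By contrast, the identification of the two GNS representations in the second stage is, as the introduction anticipates, essentially formal once the states have been matched; the only care required there is the bookkeeping of normality and $\sigma$-weak density in passing from $C_c$ to the weak closures.
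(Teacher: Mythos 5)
Your proposal is correct, and its overall architecture is exactly that of the paper: take $P$ to be the faithful normal conditional expectation, check the hypotheses of \propref{tilde j} to obtain $\tilde\j$, then match the states ${\m}\circ P$ and $\mu\circ Q$, identify the two GNS representations, and use normality and faithfulness of ${\m}\circ P$ together with the $\sigma$-weak density of ${\rm PIN}(B)B$ in $A$ to pass to weak closures (whether one forms the GNS representation of ${\m}\circ P$ on $A$ and restricts, as you do, or on ${\rm PIN}(B)B$ and extends by normality, as the paper does, is only a difference of bookkeeping). The one place where you genuinely depart from the paper is the separating condition (c): the paper disposes of it by citing \cite[Lemma 5.2]{ren:cartan}, whereas you derive it internally from \propref{principal and Stonian} --- principality of $G(B)$ rules out fixed points with non-trivial germ, and the remaining case $\varphi_u(x)\neq x$ is killed by the clopen-separation computation $eue=0$, $P(eue)=eP(u)e$ with $e={\bf 1}_V$. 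Your argument is valid and has the merit of being self-contained and of making explicit where maximal abelianness (via principality) enters in the W*-setting; the trade-off is that it is W*-specific, while the cited lemma holds for general C*-Cartan-type inclusions, where the fixed-point case cannot be excluded and is instead handled by continuity of $P(u)\in B$: a point with non-trivial germ is a limit of points moved by $\varphi_u$, at which $P(u)$ vanishes by the same clopen argument. Either route is legitimate, and the rest of your proof coincides with the paper's.
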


\begin{proof} The restriction to ${\rm PIN}(B)B$ of the conditional expectation $P$ satisfies the conditions of \propref{tilde j}: the conditions (a) and (b) are clear and \cite[Lemma 5.2]{ren:cartan} gives (c). Therefore, the inverse semigroup isomorphism
\[j:{\rm PIN}(B)\to{\rm Bis}(\Sigma(B))\]
 extends to a $*$-isomorphism 
 \[\tilde\j:{\rm PIN}(B)B\to C_c(G(B),\Sigma(B)).\]
 Since the states  $\omega_1={\m}\circ P$ of ${\rm PIN}(B)B$ and $\omega_2=\mu\circ Q$ of $C_c(G(B),\Sigma(B))$ match, they give isomorphic GNS representations $(H_1,\pi_1)$ and $(H_2,\pi_2)$: there exists a unitary operator $U: H_1\to H_2$ extending $\tilde\j$ such that $\pi_2=U\pi_1 U^{-1}$. By definition, $\pi_2(C_c(G(B),\Sigma(B))''$ is the von Neumann algebra $W^*(G(B),\Sigma(B),\mu)$ of the twisted measure groupoid $(G(B),\Sigma(B),\mu)$. On the other hand, since $\omega_1$ is normal, the $*$-homorphism 
\[\pi_1:{\rm PIN}(B)B\to\B(H_1)\] is also normal. Since the $*$-algebra ${\rm PIN}(B)B$ is $\sigma$-weakly dense in $A$, $\pi_1$ extends to a normal $*$-homorphism \[\pi:A\to\B(H_1).\] It is clear that $\pi$ is the GNS representation defined by the normal state ${\m}\circ P$ of $A$. Since $P$ is faithful, so is ${\m}\circ P$. Therefore $\pi$ is an isomorphism onto its image, which is $\pi_1({\rm PIN}(B)B)''$. Thus we have $U\pi(A)U^{-1}=W^*(G(B),\Sigma(B),\mu)$. We define the isomorphism $\Phi$ by $\Phi(a)=U\pi(a)U^{-1}$. If $b\in B$, $\pi(b)=\pi_1(b)$ is carried into $\pi_2(\hat b)$ which is the operator of multiplication $\xi\mapsto (\hat b\circ r)\xi$ on $L^2(G,\Sigma, d^*\mu)$.
\end{proof}

\begin{rem}
 A similar proof gives a similar result for an AW*-Cartan pair $(A,B)$. Instead of the GNS representation defined by the state ${\m}\circ P$ of $A$, one uses the AW*-version of the GNS construction given by H.~Widom in \cite{wid:embedding}: the conditional expectation $P:A\to B$ defines an AW*-module over $B$ and an isomorphic representation of $A$ as an AW*-subalgebra of the AW*-algebra of bounded operators of this module.
\end{rem}

\vskip4mm
\section{Concrete realization of a Cartan pair}

The main interest of the abstract realization of a Cartan pair given in the previous section, besides  being intrinsic, is to reduce the study of a W*-Cartan pair to that of a twisted measure Boolean inverse semigroup (or groupoid). In fact, we may forget that our twisted measure Boolean inverse semigroup $(\G,{\mathcal S},\m)$ arises from a Cartan pair (this is the case if and only if $\G$ is fundamental, i.e. $\G$ is a pseudogroup of transformations). We shall assume that $(\G,{\mathcal S},\m)$ is a twisted measure inverse semigroup as in the next definition.

\begin{defn} A {\it twisted measure inverse semigroup} is a twisted complete Boolean inverse semigroup $(\G,{\mathcal S})$ such that $\G^{(0)}$ has a unit together with a faithful measure ${\m}:\G^{(0)}\to [0,1]$, where {\it faithful} means that $\m$ is strictly positive on non-zero elements.
\end{defn}

The corresponding object through the non-commutative Stone equivalence recalled in the Appendix A is a twisted Stonian measure groupoid $(G,\Sigma,\mu)$:

\begin{defn} A twisted measure groupoid  $(G,\Sigma,\mu)$  is called {\it Stonian} if
\begin{enumerate}
 \item $(G,\Sigma)$ is a twisted Boolean groupoid;
 \item $G^{(0)}$ is a Stonian space and
 \item $\mu$ is a probability measure on $G^{(0)}$ which is strictly positive on non-empty open subsets and vanishes on nowhere dense Borel subsets. 
\end{enumerate}
\end{defn}

Just as measure algebras can be realized under appropriate separability assumptions by standard measure spaces, W*-algebras of twisted Stonian measure groupoids can be realized as W*-algebras of second countable twisted Boolean measure groupoids when they have a separable predual. We proceed in the same way as in the classical case of a measure algebra (see \cite[Section 40]{hal:measure} or \cite[Section 2.1]{gla:joinings}). We first consider the untwisted case. To avoid cluttering the proof too much, we isolate the following easy lemma.

\begin{lem}\label{idxpi} Assume that $\pi:X\to Y$ and $\rho: Z\to Y$ are surjective Borel maps, where $X,Y,Z$ are Borel spaces,  and that $(\alpha_y)$  is a Borel system of measures along the fibres of $\rho$. Let $\mu$ be a probability measure on $X$ such that for every Borel subset $A$ of $X$, there exists a Borel subset $B$ of $Y$ such that $\mu(A\Delta\pi^{-1}(B))=0$. Let $\nu=\int\alpha_{\pi(x)} d\mu(x)$ be the integrated measure on the fibred product $Z\times_Y X$. Then for every Borel subset $C$ of $Z\times_Y X$ of finite measure, there exists a Borel subset $D$ of $Z$ such that $\nu(C\Delta\pi_1^{-1}(D))=0$, where $\pi_1$ is the projection of $Z\times_Y X$ onto $Z$.
\end{lem}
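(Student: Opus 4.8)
The plan is to show that, modulo $\nu$-null sets, every Borel subset of $Z\times_Y X$ is the $\pi_1$-preimage of a Borel subset of $Z$; the finite-measure restriction on $C$ plays no essential role in the argument, so I would in fact prove the statement for all Borel $C$. Working in the standard Borel category, the Borel $\sigma$-algebra of the fibred product $Z\times_Y X\subseteq Z\times X$ is generated by the ``rectangles'' $\pi_1^{-1}(E)\cap\pi_2^{-1}(F)$, with $E\subseteq Z$ and $F\subseteq X$ Borel and $\pi_2$ the projection onto $X$. So it suffices to verify the conclusion for rectangles and then propagate it to all Borel sets by a good-sets argument.

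For a rectangle $R=\pi_1^{-1}(E)\cap\pi_2^{-1}(F)$, I would first apply the hypothesis on $\mu$ to the $X$-factor, choosing a Borel $B\subseteq Y$ with $\mu(F\,\Delta\,\pi^{-1}(B))=0$. The key point is the identity $\pi_2^{-1}(\pi^{-1}(B))=\pi_1^{-1}(\rho^{-1}(B))$, valid precisely because on $Z\times_Y X$ one has $\rho(z)=\pi(x)$; this is where the fibred-product structure enters. Consequently $\pi_2^{-1}(F)$ and $\pi_1^{-1}(\rho^{-1}(B))$ differ by $\pi_2^{-1}(F\,\Delta\,\pi^{-1}(B))$, and I would check this set is $\nu$-null: slicing over $x$ gives $\nu(\pi_2^{-1}(N))=\int_N\|\alpha_{\pi(x)}\|\,d\mu(x)$, which vanishes whenever $\mu(N)=0$ since the integral of a nonnegative measurable function over a $\mu$-null set is zero — this remains true even though the fibre masses $\|\alpha_{\pi(x)}\|$ may be infinite (as for counting measures). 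Intersecting with $\pi_1^{-1}(E)$, an operation that only shrinks symmetric differences, then yields $\nu\bigl(R\,\Delta\,\pi_1^{-1}(E\cap\rho^{-1}(B))\bigr)=0$, so $R$ agrees mod $\nu$ with the $\pi_1$-preimage of the Borel set $D_R=E\cap\rho^{-1}(B)\subseteq Z$.

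Finally I would let $\mathcal A$ be the class of Borel $C\subseteq Z\times_Y X$ admitting such a $D$. Since $\pi_1^{-1}$ commutes with complements and countable unions and the relation $\nu(\,\cdot\,\Delta\,\cdot\,)=0$ respects these operations, $\mathcal A$ is a $\sigma$-algebra; it contains all rectangles by the previous step, hence $\mathcal A=\mathcal B(Z\times_Y X)$, which is the claim. The genuinely delicate points are measure-theoretic bookkeeping rather than conceptual: that $\nu$ is a well-defined Borel measure with measurable $x$-slice masses (from $(\alpha_y)$ being a Borel system), that the fibred product is standard Borel so that rectangles generate its $\sigma$-algebra, and the null-set transfer $\mu(N)=0\Rightarrow\nu(\pi_2^{-1}(N))=0$ despite possibly infinite fibre measures. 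The step I would single out as the crux is the preimage identity $\pi_2^{-1}\pi^{-1}=\pi_1^{-1}\rho^{-1}$ on the fibred product, which is exactly what converts the hypothesis about $\mu$ on $X$ into a statement about $\pi_1$-pullbacks on $Z\times_Y X$.
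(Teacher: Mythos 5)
Your proof is correct and follows essentially the same route as the paper's: reduce to rectangles $\pi_1^{-1}(E)\cap\pi_2^{-1}(F)$, choose $B$ with $\mu(F\,\Delta\,\pi^{-1}(B))=0$, and take $D=E\cap\rho^{-1}(B)$, the crux in both arguments being the fibred-product identity $\pi_2^{-1}(\pi^{-1}(B))=\pi_1^{-1}(\rho^{-1}(B))$ together with the null-set transfer $\mu(N)=0\Rightarrow\nu(\pi_2^{-1}(N))=0$. The only difference is presentational: you spell out the good-sets $\sigma$-algebra argument (and correctly observe that the finiteness hypothesis is superfluous), whereas the paper compresses this into the assertion that it suffices to treat $C=E*A$.
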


\begin{proof} Given $E\subset Z$ and $A\subset X$, we write $E*A=\{(z,x)\in E\times A: \rho(z)=\pi(x)\}$. Since the Borel structure of $Z\times_Y X$ is deduced from the product Borel structure of $Z\times X$, it suffices to prove the result for $C=E*A$, where $A$ is a Borel subset of $X$ and $E$ is a Borel subset of $Z$ such that $\nu(E*A)$ is finite. Let $B$ be a Borel subset of $Y$ such that $\mu(A\Delta\pi^{-1}(B))=0$. Let $D=E\cap \rho^{-1}(B)$. Then $\pi_1^{-1}(D)=E*\pi^{-1}(B)$. Therefore,
\[C\Delta\pi_1^{-1}(D)=(E*A)\Delta (E*\pi^{-1}(B))=E*(A\Delta\pi^{-1}(B)).\]
Then \[\nu(C\Delta\pi_1^{-1}(D))=\int_{A\Delta\pi^{-1}(B)}\alpha_{\pi(x)}(E) d\mu(x) =0.\]
\end{proof}

Mutiplier actions are defined in the Appendix A (\defnref{multiplier action} and paragraph following \defnref{proper anchor}). Note that we assume that $G^{(0)}$ is compact; therefore the anchor of the action is proper. We recall that $d^*\mu=\mu\circ\tilde\lambda$, where $\tilde\lambda$ is the system of counting measures on the fibres of the domaine map. One defines similarly $r^*\mu=\mu\circ\lambda$.

\begin{prop}\label{main} Let $(G,\mu)$ be a Stonian measure groupoid such that the Hilbert space $L^2(G,d^*\mu)$ is separable. Then there exist a second countable Boolean groupoid $H$ acting by multipliers on $G$ with anchor map $\pi: G^{(0)}\to H^{(0)}$ and a surjective isometry $J: L^2(H, d^*(\pi_*\mu))\to L^2(G, d^*\mu)$ extending the $*$-homomorphism $\tilde\j: C_c(H)\to C_c(G)$ defined by the multiplier action. We can furthermore assume that the ample semigroup of $H$ is dense in the ample semigroup of $G$ with respect to the metric $\rho(S,T)=\nu(S\Delta T)$ associated with the measure $\nu=d^*\mu+r^*\mu$.
\end{prop}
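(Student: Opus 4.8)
The plan is to realize the separable measure algebra of the Stonian space $G^{(0)}$ by a standard Borel space, then lift this to the groupoid level using the ample semigroup structure. The key insight is that $L^2(G, d^*\mu)$ separable forces the measure algebra $\B(G^{(0)})/\mu$-null to be separable, so by the classical Stone realization theorem (as in \cite[Section 40]{hal:measure}) there is a second countable measure space $(Y,\pi_*\mu)$ whose measure algebra is isomorphic to that of $G^{(0)}$. I would take $H^{(0)}=Y$ with $\pi:G^{(0)}\to Y$ the anchor, chosen so that $\pi^{-1}$ induces this measure-algebra isomorphism; the condition needed on $\pi$ is precisely that every Borel $A\subset G^{(0)}$ agrees mod $\mu$ with some $\pi^{-1}(B)$, which is exactly the hypothesis feeding \lemref{idxpi}.

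Next I would build the arrows of $H$. Since $L^2(G,d^*\mu)$ is separable, the ample semigroup ${\rm Bis}(G)$ is separable for the metric $\rho(S,T)=\nu(S\Delta T)$ with $\nu=d^*\mu+r^*\mu$; I would fix a countable $\rho$-dense subfamily $(S_n)$ of ${\rm Bis}(G)$. The plan is to let $H$ be the second countable groupoid generated, through the multiplier action, by the images of these $S_n$ over the base $Y$: concretely, one takes a countable subalgebra of compact open bisections, closes it under the inverse-semigroup operations (products, inverses, and the finite joins guaranteed by Boolean-ness) to get a countable Boolean inverse semigroup $\mathcal H_0$, and lets $H={\rm Germ}(\mathcal H_0)$ be its groupoid of germs, which is second countable because $\mathcal H_0$ is countable and $H^{(0)}=Y$ is second countable. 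The multiplier action of $H$ on $G$ with anchor $\pi$ comes from the fact that each bisection in $\mathcal H_0$ was realized inside ${\rm Bis}(G)$.

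For the isometry $J$, I would use that the fibred product description of $L^2$ lets me apply \lemref{idxpi}: with $Z=H$, the measure $d^*(\pi_*\mu)$ integrates counting measures along the $d$-fibres of $H$ against $\pi_*\mu$, and the multiplier action identifies $C_c(H)$ with a subalgebra of $C_c(G)$ via $\tilde\j$. The map $\tilde\j$ is isometric for the respective $L^2$ inner products because both are computed by $\mu\circ Q$ applied to $f^**f$, and the multiplier action is compatible with the conditional expectations $Q$; thus $\tilde\j$ extends to an isometry $J$ into $L^2(G,d^*\mu)$. \textbf{The main obstacle} is proving that $J$ is \emph{surjective}, equivalently that $\tilde\j(C_c(H))$ is dense in $L^2(G,d^*\mu)$. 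Here is where \lemref{idxpi} does the real work: any element of $L^2(G,d^*\mu)$ can be approximated by finite sums $\Delta_S b$ with $S\in{\rm Bis}(G)$ and $b\in B_c$, the bisection $S$ can be $\rho$-approximated by some $S_n$ coming from $\mathcal H_0$ (by density of the chosen countable family), and $b$ can be approximated in $L^2(\mu)$ by a function pulled back from $Y$ — which is exactly the content of \lemref{idxpi} asserting that Borel subsets of the fibred product $H\times_Y G^{(0)}$ agree mod $\nu$ with pullbacks from $H$. Combining the two approximations shows the image is $L^2$-dense, giving surjectivity of $J$ and simultaneously the final density statement about the ample semigroups.
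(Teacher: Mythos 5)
Your construction of $H$ follows the paper's skeleton (a countable dense subfamily of ${\rm Bis}(G)$ for the measure metric, the countable Boolean inverse subsemigroup $\mathcal H_0$ it generates, $H={\rm Germ}(\mathcal H_0)$, the multiplier action coming from the Stone equivalence, then isometry and density of the image). The genuine gap is at the isometry, which is where all the real work lies. You claim $\tilde\j$ is isometric ``because both norms are computed by $\mu\circ Q$ applied to $f^**f$, and the multiplier action is compatible with the conditional expectations $Q$''. No such compatibility holds: since $\tilde\j(f)(\gamma)=\sum_{\eta\,:\,\eta d(\gamma)=\gamma}f(\eta)$, the map $\tilde\j$ can merge several elements of $H_{\pi(x)}$ into a single element of $G_x$, and then $\int|\tilde\j(f)|^2\,d(d^*\mu)\neq\int|f|^2\,d\bigl(d^*(\pi_*\mu)\bigr)$. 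Concretely, let $X$ be the Stone space of the Lebesgue measure algebra, $G=X\times\Z/2\Z$ the trivial group bundle (a Stonian measure groupoid), $A\in\B(X)$ the class of a fat Cantor set (so $\mu(A)>0$ but $A$ contains no nonzero element of the dyadic algebra $\D_0$), and let $\mathcal H_0$ be the countable Boolean inverse semigroup generated by $\D_0$ and the single bisection $s=(A\times\{0\})\cup(A^c\times\{1\})$. Then $\mathcal H_0^{(0)}=\D_0$ is dense in $\G^{(0)}$, the germ of $s$ is a non-unit at every point of $H^{(0)}$, and yet $sx=x$ for every $x\in A$; taking $f={\bf 1}_{S(s)}+{\bf 1}_{H^{(0)}}\in C_c(H)$ one finds $\tilde\j(f)={\bf 1}_s+{\bf 1}_{G^{(0)}}$, whence $\|\tilde\j(f)\|^2=2+2\mu(A)>2=\|f\|^2$. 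This $\mathcal H_0$ is of course not dense in ${\rm Bis}(G)$, so it does not contradict the proposition; what it shows is that the isometry is a consequence of the density of the \emph{whole} semigroup $\mathcal H_0$, not of its idempotents alone and not of any formal compatibility, and therefore requires a proof that uses that density. What is actually needed is that for $\mu$-a.e.\ $x$ the map $\eta\mapsto\eta x$ from $H_{\pi(x)}$ to $G_x$ is injective; this essential freeness of the induced action is precisely what the paper proves by contradiction, and it is there, not in the density-of-the-image step, that \lemref{idxpi} is used, together with the identification of $\G^{(0)}$ with the measure algebra $B(\mu)$ (valid because $G^{(0)}$ is Stonian and $\mu$ vanishes on nowhere dense sets), the completeness of $B(\pi_*\mu)$ and the density of $\mathcal H_0^{(0)}$. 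Your proposal contains no trace of this argument.

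Two secondary points. First, the detour through the classical realization theorem to produce $Y$ and a point map $\pi$ is unnecessary and creates a conflation: the unit space of ${\rm Germ}(\mathcal H_0)$ is by construction the Stone dual of the countable Boolean algebra $\mathcal H_0^{(0)}$, hence automatically second countable, and $\pi$ is simply the dual of the inclusion $\mathcal H_0^{(0)}\hookrightarrow\G^{(0)}$ furnished by Stone duality; your $Y$ from the Halmos theorem is a priori a different space, and a point map realizing a measure-algebra isomorphism is not something one may simply ``choose''. Second, the $L^2$-approximation of $b$ by pullbacks from $Y$ follows directly from the density of $\mathcal H_0^{(0)}$ in $\G^{(0)}$ and is not ``exactly the content of \lemref{idxpi}''; that lemma transfers the measure-algebra isomorphism from the unit spaces to the fibred product $H\ltimes G^{(0)}$, and it belongs to the essential-freeness argument you omitted.
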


\begin{proof} Let \(B(d^*\mu)=\B orel(G)/{\mathcal N}(d^*\mu)\) be the measure algebra of the measure space $(G,d^*\mu)$. The measure $d^*\mu$ being possibly infinite, we consider the set $B_{\rm finite}(d^*\mu)$ of elements of $B(d^*\mu)$ of finite measure. Since the Hilbert space $L^2(G,d^*\mu)$ is separable, according to \cite[Section 42 (1)]{hal:measure}, the metric space $(B_{\rm finite}(d^*\mu), \rho_d)$, where $\rho_d(E,F)=d^*\mu(E\Delta F)$, is separable. We can view $\G={\rm Bis}(G)$ as a subspace of $B_{\rm finite}(d^*\mu)$: if $S\in\G$, $d^*\mu(S)=\mu(d(S))$ is finite and if $S,T\in\G$ satisfy $d^*\mu(S\Delta T)=0$, then $S=T$ because $\mu$ is strictly positive on non-empty open sets. Therefore $\G$ is separable with respect to the metric $\rho_d$. We pick a countable dense subset of $\G$. The Boolean inverse subsemigroup $\mathcal H$ generated by this subset is countable. Therefore (see \propref{properties}(iii)), its groupoid of germs $H$ is second countable. According to \thmref{equivalence}, the inclusion $j:{\mathcal H}\to \G$ gives a multiplier action of $H$ on $G$. Let  $\pi:X\to Y$ be the dual map of the inclusion $j^{(0)}: {\mathcal H}^{(0)}\to\G^{(0)}$, where $X=G^{(0)}$ and $Y=H^{(0)}$. We claim that the action of $H$ on $X$, defined in the paragraph following \defnref{multiplier action}, is essentially free. This means that, introducing the semidirect product $H_X=H\ltimes X$ and its isotropy subgroupoid $H_X'$, $H_X'\setminus H_X^{(0)}$ has measure 0. Suppose that this is not true. Then there exists a Borel subset $B\subset H_X'\setminus H_X^{(0)}$ of positive measure. The inverse map 
\[\pi^{-1}: \B orel(Y)\to \B orel(X)\] 
induces a Boolean algebra homomorphism 
\[\pi^*: B(\mu_Y)\to B(\mu)\] 
where $\mu_Y=\pi_*\mu$, which is isometric and which extends $j^{(0)}$. Since $(X,\mu)$ is a Stonian measure space, $\G^{(0)}$ is identified to the measure algebra $B(\mu)$ (\cite[Corollary 1 of Chapter 41]{gh:boole}.  Since ${\mathcal H}^{(0)}$ is dense in $\G^{(0)}$ and the metric space $B(\mu_Y)$ is complete, $\pi^*$ is an isomorphism of $B(\mu_Y)$ onto  $B(\mu)$.  Using \lemref{idxpi}, one deduces that ${\rm id}\times\pi:H_X=H\ltimes X\to H\ltimes Y=H$ implements an isomorphism of the measure algebra of $(H,d^*\mu_Y)$ onto the measure algebra of $(H_X, d^*\mu)$. Therefore, there exists a Borel subset $B_1\subset H$ such that  $B\sim ({\rm id}\times\pi)^{-1}(B_1)$. Then, there exists a compact open bisection $S$ of $H$ such that $S_1=S\cap B_1$ has positive measure. For a.e. $(\eta,x)\in S_1*X$, we have $\eta x= x$. Equivalently, 
\[d^*\mu_Y(S_1\Delta d(S_1))=d^*\mu(\pi^{-1}(S_1)\Delta \pi^{-1}(d(S_1)))=0.\] 
Then, for $\mu$-a.e. $x\in\pi^{-1}(d(S_1))$ we have both $(S_1\pi(x), x)$ in $H^{(0)}$ and in its complement. This is a contradiction. We let $U$ be the set of points in $X$ with trivial isotropy. For $f\in C_c(H)$ and $(\eta,x)\in H*U$, we have $\tilde\j(f)(\eta x)=f(\eta)$. Therefore,
 \begin{align*}
 \int |\tilde\j(f)|^2 d(\mu\circ\tilde\lambda)=&\int_U \sum_{\gamma\in G_{x}}| \tilde\j(f)(\gamma)|^2 d\mu(x)\cr
 =&\int_U \sum_{\eta\in H_{\pi(x)}}| \tilde\j(f)(\eta x)|^2 d\mu(x)\cr
  =&\int \sum_{\eta\in H_{\pi(x)}} |f(\eta)|^2 d\mu(x)\cr
 =&\int \sum_{\eta\in H_y} |f(\eta)|^2 d\mu_Y(y) =\int |f|^2 d(\mu_Y\circ\tilde\lambda).\cr
\end{align*}
Consequently, $\tilde\j$ extends to an isometry $J: L^2(H, d^*\mu_Y)\to L^2(G, d^*\mu)$. Let us show that its image is dense. Since $\mathcal H$ is dense in ${\rm Bis}(G)$, for every $ S\in {\rm Bis}(G)$ and every $\epsilon>0$, there exists $T\in {\mathcal H}$ such that 
\[\rho_d( S,j(T))=d^*\mu(S\Delta j(T))\le \epsilon^2.\]
This says exactly that $\|{\bf 1}_{S}-{\bf 1}_{j(T)}\|_{L^2(G, d^*\mu)}\le\epsilon$. We deduce first that $J(C_c(Y))$ is dense in $L^2(X,\mu)$ and then that $J(C_c(H))$ is dense in $L^2(G, d^*\mu)$. For the last statement, we note that the Hilbert space $L^2(G, r^*\mu+d^*\mu)$, which is isometric to $L^2(G,d^*\mu)$, is also separable. Then, we consider a countable Boolean inverse subsemigroup $\mathcal H$ of ${\rm Bis}(G)$ which is dense with respect to the metric associated with the measure $r^*\mu+d^*\mu$.
 
\end{proof}

In the context of measured groupoids, the notion of continuous bisection may be replaced by the following definition (cf. \cite[Definition I.3.24]{ren:approach}).

\begin{defn}\label{non-singular}  Let $G$ be a Borel groupoid. A {\it Borel bisection} $S$ is a Borel subset of $G$ such that the restriction to it of the range and domain maps are Borel isomorphisms onto Borel subsets of the unit space. It is called {\it non-singular} with respect to a measure $\mu$ on $G^{(0)}$ if the measures $(r_{|S})^*\mu$ and $(d_{|S})^*\mu$ are equivalent.  
\end{defn}

Let us recall (\cite[Definition 1.3]{ren:twisted extensions}) that a representation of a locally compact twisted groupoid $(G,\Sigma)$ with Haar system $\lambda$  is a pair $(\mu,{\mathcal H})$ where $\mu$ is a quasi-invariant measure on $G^{(0)}$ and ${\mathcal H}$ is a measurable Hilbert bundle over a conull measurable subset $O$ of $G^{(0)}$ endowed with a measurable unitary representation $L$ of the reduction $\Sigma_{|O}$ and such that $L(\sigma\theta)\xi=L(\sigma)(\theta\xi)$ for all $(\theta, (\sigma,\xi))$ in $\t\times(\Sigma_{|O}*{\mathcal H})$.  Such a representation yields a representation of the $*$-algebra $C_c(G,\Sigma)$, still denoted by $L$, on the Hilbert space $L^2(G^{(0)},\mu;{\mathcal H})$ according to the formula
\[(\xi\,|\,L(f)\eta)=\int_G f(\sigma)(\xi\circ r(\sigma)\,|\, \,L(\sigma)(\eta\circ s(\sigma)))\,D^{-1/2}(\sigma')\,d\lambda^x(\sigma')d\mu(x).\]
where $D$ is the modular function of $\mu$,  $f$ belongs to $C_c(G,\Sigma)$ and $\xi,\eta$ belong to $L^2(G^{(0)},\mu;{\mathcal H})$. The representation $L$ extends through the same formula to the Banach $*$-algebra $L^I(G,\Sigma,\mu)$ whose elements are (classes of) measurable functions $f:\Sigma\to {\bf C}$ such that $f(\theta\sigma)=f(\sigma)\overline\theta$ for all $(\theta,\sigma)\in\t\times\Sigma$ and
\[\|f\|_I=\max\big(\sup_{x\in G^{(0)}}\int_G |f(\sigma)|d\lambda^x(\sigma'),\sup_{x\in G^{(0)}}\int_G |f(\sigma^{-1})|d\lambda^x(\sigma')\big)\]
is finite (where $\sup$ denotes here the essential supremum of the function). When $G$ is \'etale, $\lambda^x$ is the counting measure on $G^x=r^{-1}(x)$. We denote by $\nu_0=D^{-1/2}(r^*\mu)$ the symmetric measure on $G$.

\begin{lem}\label{approximation} Let $(\mu,L)$ be a representation of a twisted \'etale groupoid $(G,\Sigma)$. We assume that $c:G\to\Sigma$ is a measurable section of $p:\Sigma\to G$. Given a non-singular Borel bisection $S$ of $G$, we define $u_S:\Sigma\to{\bf C}$ by $u_S(\sigma)=\overline\theta$ if $\sigma$ can be written $\theta c(\gamma)$ with $\theta\in\t$ and $\gamma\in S$ and $u_S(\sigma)=0$ otherwise (in other words, $u_S=\Delta_{c(S)}$ with the notation of Section 1.3). Then,
\begin{enumerate}
 \item  $u_S$ belongs to $L^I(G,\Sigma,\mu)$,
 \item if $S_n,S$ are non-singular Borel bisections of $G$ such that $\nu_0(S_n\Delta S)$ tends to 0, then $L(u_{S_n})$ converges weakly to $L(u_S)$.
\end{enumerate}
\end{lem}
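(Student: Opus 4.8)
The plan is to treat the two assertions separately, deducing both from the bisection property of $S$ and the explicit integral formula for $L$ recalled above.

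For (i) I would first note that $u_S$ is a well-defined homogeneous Borel function: it is Borel because $S$ is Borel and $c$ is measurable, and the identity $u_S(\theta\sigma)=\overline\theta\,u_S(\sigma)$ is immediate from $\sigma=\theta_0\,c(\gamma)\Rightarrow\theta\sigma=\theta\theta_0\,c(\gamma)$. The modulus $|u_S|$ descends on $G$ to the indicator $\mathbf 1_{S}$, so for fixed $x$ the first integrand in the definition of $\|\cdot\|_I$ counts the points of $S\cap G^x$, while the second counts the points of $S^{-1}\cap G^x$. As $S$ is a bisection, $r|_S$ and $d|_S$ are injective, so each of these sets has at most one point; hence both essential suprema are $\le 1$ and $\|u_S\|_I\le 1$, giving $u_S\in L^I(G,\Sigma,\mu)$.

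For (ii) the key preliminary observation is that, since $u_{S_n}$ and $u_S$ are built from the \emph{same} section $c$, the difference $w_n:=u_{S_n}-u_S$ satisfies $|w_n|=\mathbf 1_{S_n\Delta S}\circ p$: the two functions coincide on $p^{-1}(S_n\cap S)$, and on $p^{-1}(S_n\Delta S)$ exactly one of them is nonzero, of modulus one. I would then invoke two facts. First, the extension of $L$ to the Banach $*$-algebra $L^I(G,\Sigma,\mu)$ is contractive, so $\|L(u_{S_n})\|\le\|u_{S_n}\|_I\le 1$ and the family $\{L(u_{S_n})\}$ is uniformly bounded. Second, the bounded vectors $\xi\in L^2(G^{(0)},\mu;{\mathcal H})$ (those with $\esssup\|\xi\|<\infty$) are dense, by truncation. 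By the usual $3\varepsilon$ argument, weak operator convergence $L(u_{S_n})\to L(u_S)$ follows once one checks $(\xi\mid L(w_n)\eta)\to 0$ for bounded $\xi,\eta$.

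For such $\xi,\eta$ with $\|\xi\|,\|\eta\|\le C$ a.e., I would bound the integral formula using $\|L(\sigma)\|=1$ and Cauchy--Schwarz in the fibre:
\[
|(\xi\mid L(w_n)\eta)|\le\int_G|w_n(\sigma)|\,\|\xi\circ r(\sigma)\|\,\|\eta\circ d(\sigma)\|\,D^{-1/2}(\sigma')\,d\lambda^x(\sigma')\,d\mu(x)\le C^2\int_{S_n\Delta S}D^{-1/2}\,d(r^*\mu),
\]
and the last integral equals $\nu_0(S_n\Delta S)$ by the very definition $\nu_0=D^{-1/2}(r^*\mu)$. Since $\nu_0(S_n\Delta S)\to 0$ by hypothesis, this tends to $0$. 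The one delicate point---and the only real obstacle---is that the modular function $D^{-1/2}$ may be unbounded, so for a general $L^2$ vector one cannot factor $\|\xi\|,\|\eta\|$ out of the integral; restricting to bounded vectors is exactly what confines $D^{-1/2}$ to the finite mass $\nu_0(S_n\Delta S)$, and the passage back to arbitrary vectors is supplied by the uniform bound.
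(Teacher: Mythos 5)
Your proposal is correct and follows essentially the same route as the paper's own proof: both parts rest on the bisection property giving $\|u_S\|_I\le 1$, the identity $|u_{S_n}-u_S|={\bf 1}_{S_n\Delta S}$ (valid because the same section $c$ is used), the uniform bound $\|L(\cdot)\|\le\|\cdot\|_I$, reduction to the dense set of essentially bounded vectors, and the estimate of the pairing by $\sup_x\|\xi(x)\|\,\sup_y\|\eta(y)\|\,\nu_0(S_n\Delta S)$. Your explicit remark on why the modular function forces the restriction to bounded vectors is a useful clarification of a point the paper leaves implicit, but the argument is the same.
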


\begin{proof} We have $|u_S|={\bf 1}_S$ and $\|u_S\|_I\le 1$. We observe that if $S$ and $T$ are bisections of $G$, $|u_S-u_T|={\bf 1}_{S\Delta T}$ and that if $\xi$ and $\eta$ are elements of $L^2(G^{(0)},\mu;{\mathcal H})$,
\[|(\xi\circ r(\sigma)\,|\, \,L(\sigma)(\eta\circ s(\sigma)))|\le\sup_x\|\xi(x)\|\sup_y\|\eta(y)\| \]
Since $\|L(u_S)-L(u_{S_n})\|$ is bounded by $\|u_S-u_{S_n}\|_I\le 2$, it suffice to prove the convergence of $(\xi\,|\,(L(u_S)-L(u_{S_n}))\eta)$ to zero on the dense set of vectors $\xi$ in $L^2(G^{(0)},\mu;{\mathcal H})$ such that $\sup_x\|\xi(x)\| <\infty$. Then, $|(\xi |(L(u_S)-L(u_{S_n}))\eta)|$ is dominated by
\begin{align*}
&\int_G |u_S(\sigma)-u_{S_n}(\sigma)||(\xi\circ r(\sigma)\,|\, \,L(\sigma)(\eta\circ s(\sigma)))| d\nu_0(\sigma')\cr
&\le \sup_x\|\xi(x)\|\sup_y\|\eta(y)\| \int_G {\bf 1}_{S\Delta {S_n}}d\nu_0\cr
&\le \sup_x\|\xi(x)\|\sup_y\|\eta(y)\|\, \nu_0(S\Delta {S_n})\cr
\end{align*}
and tends to zero.
 \end{proof}

In order to apply our previous results, we note that a twist on a Stonian groupoid is a trivial principal $\t$-bundle.

\begin{prop}(cf. \cite[Proposition 4.6.]{dfp:cartan})\label{trivial twist}. Let $(G,\Sigma)$ be a twisted Stonian groupoid. Then the quotient map $p:\Sigma\to G$ admits a continuous section.
 \end{prop}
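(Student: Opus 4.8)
The plan is to show that a twist $X\times\t\rightarrowtail\Sigma\twoheadrightarrow G$ over a Stonian groupoid $G$ admits a continuous section of $p$. Since $G$ is \'etale, it is covered by compact open bisections, and on each such bisection $S'\subset G$ the pullback of $\Sigma$ is a $\t$-principal bundle over a Stonian space (a homeomorph of a compact open subset of $X=G^{(0)}$). The heart of the matter is therefore to establish the \emph{local} triviality of the $\t$-bundle over a Stonian base: a principal $\t$-bundle over an extremally disconnected compact Hausdorff space is trivial. I would first reduce to this local statement and then glue the local sections coherently using the Boolean (totally disconnected) structure of the unit space.

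For the local triviality over a Stonian space $K$, the cleanest route is via the classification of principal $\t$-bundles. Continuous principal $\t$-bundles over a compact Hausdorff space $K$ are classified by $\check H^2(K;\Z)$ (equivalently by $H^1(K;\underline\t)$ with the sheaf of continuous $\t$-valued functions, via the exponential sequence $\Z\rightarrowtail\underline{\R}\twoheadrightarrow\underline\t$). For an extremally disconnected $K$ the sheaf cohomology in positive degrees vanishes: extremally disconnected spaces are exactly the projective objects in compact Hausdorff spaces, and one has $\check H^n(K;\mathcal F)=0$ for $n\ge1$ for any abelian sheaf, in particular $\check H^2(K;\Z)=0$. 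Hence every principal $\t$-bundle over $K$ is trivial, i.e.\ admits a continuous global section. Alternatively, and more self-containedly, one can argue that over a Boolean (totally disconnected, hence covered by arbitrarily fine clopen partitions) space one trivializes the bundle over each piece of a sufficiently fine clopen partition and then, because the pieces are disjoint and clopen, the local sections automatically patch into a global continuous section with no cocycle obstruction; the extremal disconnectedness guarantees one can pass to the limit of such partitions when $\t$-valued transition functions are involved.

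With local triviality in hand, I would globalize as follows. Choose a cover of $G^{(0)}$ by compact open sets on which the restricted bundle is trivial; by total disconnectedness refine this to a partition into compact open pieces, over each of which $\Sigma$ restricted to the corresponding portion of a bisection carries a continuous section. The key structural simplification is that $G$, being Boolean and \'etale, has its arrow space covered by compact open bisections $S'$; over each $S'$, $\Sigma_{|S'}$ is a $\t$-bundle over the Stonian set $d(S')$, so it is trivial and has a continuous section $c_{S'}\colon S'\to\Sigma$. Because distinct germs of a fixed bisection are separated by clopen neighborhoods, and because we may arrange the bisections covering $G$ to have disjoint (or coherently overlapping) ranges after refining, the local sections assemble into a single continuous map $c\colon G\to\Sigma$ with $p\circ c=\mathrm{id}_G$.

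The main obstacle is the gluing/globalization step rather than the local triviality. Local triviality over a Stonian space is essentially a cohomology-vanishing fact and is clean; the delicate point is that a coherent \emph{continuous} section over all of $G$ requires controlling how the fiberwise trivializations match on overlaps of bisections, i.e.\ controlling the $\t$-valued transition cocycle on $G^{(0)}$. Here extremal disconnectedness is doing real work: it is precisely what kills the relevant $\check H^1$ with $\t$-coefficients and forces the transition data to be a coboundary, so the patched section exists. I would therefore organize the proof to isolate this: prove triviality of the bundle over the Stonian unit space first (so any transition cocycle on $G^{(0)}$ is trivializable), then use the local compact-open-bisection sections together with this trivialization to produce the global continuous section $c$, taking care that the homogeneity relation $c(\theta\sigma)$-compatibility is respected so that $c$ genuinely splits $p$ continuously.
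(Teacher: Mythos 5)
Your instinct that the heart of the matter is globalization rather than local triviality is correct, but your resolution of the global step fails, and it is exactly there that the paper does something your proposal is missing. The first concrete problem is the refinement you rely on: you propose to arrange the compact open bisections covering $G$ into a disjoint family ("partition") and then glue for free. Such a partition does not exist in general. A Stonian groupoid in this paper need not be Hausdorff, and if $G$ were a disjoint union of open bisections (each of which is Hausdorff, being homeomorphic via $d$ to a subset of $G^{(0)}$), then $G$ itself would be Hausdorff --- so in the non-Hausdorff case the refinement is outright impossible. Even for Hausdorff $G$, Stonian groupoids are essentially never $\sigma$-compact, and transfinite disjointification breaks down: an infinite union of compact open bisections need not be closed, so the successive set differences need not be open. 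The best one can do is take a \emph{maximal} pairwise disjoint family of compact open bisections, whose union is dense and open but in general proper; this is exactly the paper's starting point, and it leaves a genuinely nontrivial extension problem that your argument never addresses.

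The second problem is that your use of the Stonian hypothesis is misplaced, so your proof never engages the real obstruction. The cohomological vanishing you invoke ($\check H^2(K;\Z)=0$, and $\check H^1$ with coefficients in the sheaf of continuous $\t$-valued functions) holds for \emph{every} compact totally disconnected space, since covering dimension zero suffices; thus it is available for all Boolean groupoids and cannot be ``where extremal disconnectedness does real work.'' (Local triviality over each compact open bisection indeed needs only Boolean-ness; it is the surjectivity of ${\rm Bis}(\Sigma)\to{\rm Bis}(G)$ proved in \propref{gpdtwist to isgtwist} by refining a trivializing cover to a finite clopen partition.) The obstruction to a global continuous section does not live on $G^{(0)}$ at all: it concerns the bundle over the arrow space $G$, which is neither compact nor, in general, Hausdorff. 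What extremal disconnectedness actually supplies --- and what the paper uses, citing \cite[0.1.1]{gut:bundles} --- is the Gleason-type extension property: a continuous map from a dense open subset of a compact extremally disconnected space into a compact Hausdorff space extends continuously, and uniquely. The paper's proof lifts each member of a maximal disjoint family to a continuous bisection of $\Sigma$ (Boolean-ness only), obtaining a continuous section of $p$ over a dense open subset $U$ of $G$; then, for every compact open bisection $S$ of $G$, it extends $c_{|S\cap U}$ to a continuous map $c_S:S\to p^{-1}(S)$ by this extension property, and the extensions $c_S$, $c_{S'}$ agree on $S\cap S'$ because they agree on the dense subset $S\cap S'\cap U$ of it. Without this extension step, or some substitute that uses extremal disconnectedness beyond the topology of $G^{(0)}$, your argument does not go through.
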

 
 \begin{proof} Let $(S_i)_{i\in I}$ be a maximal pairwise disjoint family of compact open bisections of $G$.  Its union is an open set which is dense by maximality. For every $i\in I$, we pick a continuous bisection $T_i$ of $\Sigma$ such that $p(T_i)=S_i$. For every $i$ and every $\gamma\in S_i$, there is a unique $\sigma\in T_i$ such that $p(\sigma)=\gamma$. We write $\sigma=c(\gamma)$. This defines a section $c: \cup_iS_i\to\Sigma$. To show that $c$ is continuous, it suffices to show that for all $i$ its restriction $c_{|S_i}$ is continuous. For every continuous bisection $U$ of $\Sigma$ and every open subset $W$ of $\t$,  
$c_{|S_i}^{-1}(UW)=p(UW\cap T_i)$ is open because $p$ is an open map. Therefore $c_{|S_i}$ is continuous is continuous and so is $c$. Let $S$ be a compact open bisection of $G$. The restriction of $c$ to $S\cap U$ is a continuous function into the compact space $p^{-1}(S)$. Therefore, according to \cite[0.1.1]{gut:bundles}, it admits a continuous extension $c_S: S\to p^{-1}(S)$ which is unique. By continuity $p\circ c={\rm id}_S$. If $S$ and $S'$ are two compact open bisections, $c_S$ and $c_{S'}$ agree on the dense open set $S\cap S'\cap U$ which is dense in $S\cap S'$, therefore $c_S$ and $c_{S'}$ agree on $S\cap S'$. We define $c:G\to\Sigma$ by $c(\gamma)=c_S(\gamma)$ if $\gamma$ belongs to the compact open section $S$. Then $c$ is a continuous section of $p$.
\end{proof}

Recall from the paragraph following \defnref{vN} that the Hilbert space $L^2(G, \Sigma, d^*\mu)$ of a twisted \'etale measure groupoid $(G,\Sigma,\mu)$ is defined as the completion of $C_c(G,\Sigma)$ with respect to the scalar product
\[(f\,|\, g)=\int\int(\overline fg)(\gamma)d\lambda_x(\gamma) d\mu(x).\]
where $\lambda_x$ is the counting measure on $G_x=d^{-1}(x)$ and $\overline f g$ is viewed as a function on $G$. We also recall from the paragraph following \defnref{twisted category} that a multiplier action of a twisted Boolean groupoid $(H,\Lambda)$ on another twisted Boolean groupoid $(G,\Sigma)$ defines a $*$-homomorphism $\tilde\j: C_c(H,\Lambda)\to C_c(G,\Sigma)$ which extends the twisted Boolean inverse semigroup homomorphism $j: ({\rm Bis}(H),{\rm Bis}(\Lambda))\to ({\rm Bis}(G),{\rm Bis}(\Sigma))$. The map $S\mapsto\Delta_S$ giving the embedding of ${\rm Bis}(\Sigma)$ into $C_c(G,\Sigma)$ has been defined in Section 1.3.

\begin{thm}\label{concrete} Let $(G,\Sigma, \mu)$ be a twisted Stonian measure groupoid such that the Hilbert space $L^2(G, \Sigma, s^*\mu)$ is separable. Then there exists a second countable twisted Boolean groupoid $(H,\Lambda)$ acting by multipliers on $(G,\Sigma)$ with anchor map $\pi: G^{(0)}\to H^{(0)}$ such that
\begin{enumerate}
 \item the $*$-homomorphism $\tilde\j: C_c(H,\Lambda)\to C_c(G,\Sigma)$ extends to a surjective isometry $J: L^2(H,\Lambda, d^*(\pi_*\mu))\to L^2(G,\Sigma, d^*\mu)$;
 \item  $J$ implements an isomorphism of $W^*(H,\Lambda,\pi_*\mu)$ onto $W^*(G,\Sigma,\mu)$ carrying $L^\infty(H^{(0)},\pi_*\mu)$ onto $L^\infty(G^{(0)},\mu)$.
\end{enumerate}
\end{thm}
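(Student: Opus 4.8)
The plan is to lift the untwisted construction of \propref{main} across the twist, using the continuous section furnished by \propref{trivial twist} to reduce every Hilbert-space computation to the untwisted case. First I would invoke \propref{trivial twist} to fix a continuous section $c:G\to\Sigma$ of $p$. Since every $f\in C_c(G,\Sigma)$ is homogeneous, the assignment $f\mapsto f\circ c$ is an isometric bijection of $L^2(G,\Sigma,d^*\mu)$ onto $L^2(G,d^*\mu)$ (and likewise with $r^*\mu$), because $|f\circ c|$ is exactly the function $|f|$ on $G$ that computes the norm. In particular the hypothesis that $L^2(G,\Sigma,s^*\mu)$ is separable gives separability of $L^2(G,d^*\mu)$, so \propref{main} applies to the underlying Stonian measure groupoid $(G,\mu)$. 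This produces a second countable Boolean groupoid $H$ acting by multipliers on $G$ with anchor $\pi:X=G^{(0)}\to Y=H^{(0)}$, a surjective isometry $J_0:L^2(H,d^*\mu_Y)\to L^2(G,d^*\mu)$ extending the untwisted $\tilde\j_0:C_c(H)\to C_c(G)$ (where $\mu_Y=\pi_*\mu$), together with the essential freeness of the action of $H$ on $X$, the measure-algebra isomorphism $\pi^*:B(\mu_Y)\to B(\mu)$, and the density of $\mathcal H={\rm Bis}(H)$ in $\G={\rm Bis}(G)$ for the metric $\rho$ attached to $\nu=d^*\mu+r^*\mu$.

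Next I would construct the twist $\Lambda$ over $H$. Using $c$, the twist $\Sigma$ is encoded by the $\t$-valued $2$-cocycle $\kappa$ on $G$ defined by $c(\gamma_1)c(\gamma_2)=\kappa(\gamma_1,\gamma_2)\,c(\gamma_1\gamma_2)$. Identifying $G$ with $H\ltimes X$ over the conull set $U\subset X$ of points with trivial isotropy, I would transport $\kappa$ to a cocycle on $H\ltimes X$ and then descend it through $\pi$ to a Borel cocycle $\bar\kappa$ on $H$; the twist $\Lambda$ is then the extension $H\times_{\bar\kappa}\t$, again second countable. \textbf{This descent is the main obstacle}: a priori $\kappa$ is only a datum on $X$, and what must be shown is that, modulo null sets, it factors through $\pi$. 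This is precisely where the measure-algebra isomorphism $\pi^*:B(\mu_Y)\to B(\mu)$ of \propref{main} and \lemref{idxpi} enter, together with the essential freeness, to see that every relevant measurable datum on $X$ agrees almost everywhere with the pullback of a datum on $Y$. Choosing the section of $\Lambda$ so that it maps to $c$, the multiplier action of $(H,\Lambda)$ on $(G,\Sigma)$ determines $\tilde\j:C_c(H,\Lambda)\to C_c(G,\Sigma)$ extending $j$, and the relation $\tilde\j(f)(\eta x)=f(\eta)$ on $H*U$ holds exactly as in \propref{main}. Transporting through the two sections then reduces the isometry identity $\int|\tilde\j(f)|^2\,d(\mu\circ\tilde\lambda)=\int|f|^2\,d(\mu_Y\circ\tilde\lambda)$ to the untwisted computation already carried out in \propref{main}, so $\tilde\j$ extends to a surjective isometry $J:L^2(H,\Lambda,d^*\mu_Y)\to L^2(G,\Sigma,d^*\mu)$, proving (i).

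For (ii) I would first check that $J$ intertwines the regular representations of \defnref{vN}. Since $\tilde\j$ is a $*$-homomorphism with $\tilde\j(f*g)=\tilde\j(f)*\tilde\j(g)$ and $J$ extends $\tilde\j$, for $f,\xi\in C_c(H,\Lambda)$ one computes $J\,{\rm Reg}_H(f)\xi=\tilde\j(f*\xi)=\tilde\j(f)*J\xi={\rm Reg}_G(\tilde\j(f))\,J\xi$, whence by density $J\,{\rm Reg}_H(f)\,J^*={\rm Reg}_G(\tilde\j(f))$; taking weak closures gives $J\,W^*(H,\Lambda,\mu_Y)\,J^*\subseteq W^*(G,\Sigma,\mu)$. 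The reverse inclusion is the point at which \lemref{approximation} is used: $C_c(G,\Sigma)$ is spanned by the $\Delta_S$ with $S\in{\rm Bis}(\Sigma)$, and the $\rho$-density of $\mathcal H$ lets me approximate each $p(S)$ by bisections $S_n\in\mathcal H$ with $\nu_0(p(S)\Delta S_n)\to0$, so that ${\rm Reg}_G(\Delta_{c(S_n)})\to{\rm Reg}_G(\Delta_{c(p(S))})$ weakly; as each $\Delta_{c(S_n)}$ lies in $\tilde\j(C_c(H,\Lambda))$ and the residual $\t$-factor relating $\Delta_S$ to $\Delta_{c(p(S))}$ lies in $L^\infty(G^{(0)})=J\,L^\infty(H^{(0)})\,J^*$, every generator of $W^*(G,\Sigma,\mu)$ belongs to the weak closure of ${\rm Reg}_G(\tilde\j(C_c(H,\Lambda)))$. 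Hence $J$ implements an isomorphism $W^*(H,\Lambda,\mu_Y)\to W^*(G,\Sigma,\mu)$. Finally, on the unit-space part $J$ intertwines multiplication by $L^\infty(H^{(0)},\mu_Y)$ with multiplication by $L^\infty(G^{(0)},\mu)$, and since $\pi^*$ is a measure-algebra isomorphism this restriction carries $L^\infty(H^{(0)},\mu_Y)$ onto $L^\infty(G^{(0)},\mu)$, establishing (ii).
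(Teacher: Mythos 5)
Your overall architecture for parts (i) and (ii) --- the continuous section $c$ from \propref{trivial twist} used to transfer separability to $L^2(G,d^*\mu)$, \propref{main} for the untwisted skeleton, the intertwining $J\,{\rm Reg}_H(f)\,J^{-1}={\rm Reg}_G(\tilde\j(f))$, and \lemref{approximation} to get the reverse weak density --- coincides with the paper's proof. The genuine gap is in your construction of the twist $\Lambda$. You propose to encode $\Sigma$ by the continuous $2$-cocycle $\kappa$ determined by $c$, transport it to $H\ltimes X$ via the identification of $G$ with $H\ltimes X$ over the conull set $U$, and descend it through $\pi$ to a cocycle $\bar\kappa$ on $H$. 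But that identification is itself only a Borel, almost-everywhere statement (it comes from essential freeness, which is a measure-theoretic assertion), and the measure-algebra isomorphism $\pi^*$ only yields that measurable data on $X$ agree almost everywhere with pullbacks of data on $Y$. Consequently $\bar\kappa$ is at best a Borel cocycle, defined modulo null sets and satisfying the cocycle identity almost everywhere, so $H\times_{\bar\kappa}\t$ is a measurable twist. The theorem, however, asserts the existence of a second countable \emph{twisted Boolean groupoid} in the sense of \defnref{gpd twist}: $\Lambda$ must be a locally compact topological extension of $H$ with continuous open projection, carrying a continuous $\t$-equivariant multiplier action on $(G,\Sigma)$ as in \defnref{twisted category}. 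Nothing in your argument produces continuity of $\bar\kappa$, nor of the action map $\Lambda*\Sigma\to\Sigma$, and there is no reason a Borel cocycle obtained this way should be cohomologous to a continuous one without a further (nontrivial) smoothing argument. You flag the descent as ``the main obstacle,'' but the obstacle you then address (measurable data factoring through $\pi$) is not the one that bites; the topological one is, and it is left untouched.

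The paper sidesteps cocycles entirely: having chosen the countable dense Boolean inverse subsemigroup ${\mathcal H}\subset{\rm Bis}(G)$, it sets ${\mathcal S}_{\mathcal H}$ to be the inverse image of ${\mathcal H}$ under the quotient map ${\rm Bis}(\Sigma)\to{\rm Bis}(G)$, and defines $(H,\Lambda)$ as the twisted groupoid of germs of $({\mathcal H},{\mathcal S}_{\mathcal H})$ via \propref{isgtwist to gpdtwist}. The twist is thus inherited, with its topology, directly from $\Sigma$, and the inclusion $j:({\mathcal H},{\mathcal S}_{\mathcal H})\to({\rm Bis}(G),{\rm Bis}(\Sigma))$ yields the $\t$-equivariant multiplier action for free by the functoriality of the twisted Stone equivalence (\thmref{twisted equivalence}); no measure theory enters the construction of $\Lambda$ at all. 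If you replace your cocycle descent by this pullback construction, the remainder of your argument goes through essentially as you wrote it.
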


\begin{proof} The existence of a continuous section $c$ for the quotient map $\Sigma\to G$, given in \propref{trivial twist} reduces part of the proof to the untwisted case. The Hilbert space $L^2(G,d^*\mu)$, which is isometric to $L^2(G, \Sigma, d^*\mu)$ is separable. According to \propref{main}, there exists a countable Boolean inverse subsemigroup $\mathcal H$ of $\G={\rm Bis}(G)$ which is dense with respect to the metric associated with the measure $d^*\mu+r^*\mu$.  We define ${\mathcal S}_{\mathcal H}$ as the inverse image of $\mathcal H$ under the map ${\mathcal S}\to\G$, where ${\mathcal S}={\rm Bis}(\Sigma)$. We let $(H,\Lambda)$ be the twisted groupoid of germs of $({\mathcal H},{\mathcal S}_{\mathcal H})$. The inclusion $j:({\mathcal H},{\mathcal S}_{\mathcal H})\to(\G,{\mathcal S})$ defines a multiplier action of $(H,\Lambda)$ on $(G,\Sigma)$, hence a $*$-homomorphism 
\[\tilde\j: C_c(H,\Lambda)\to C_c(G,\Sigma).\]The proof that for $f\in C_c(H,\Lambda)$, $\int |\tilde\j(f)|^2 d(d^*\mu)=\int |f|^2 d(d^*(\pi_*\mu))$ is the same as in \propref{main}. Therefore, $\tilde\j$ extends to an isometry 
\[J: L^2(H,\Lambda, s^*\pi_*\mu)\to L^2(G, \Sigma, s^*\mu).\]
Let us show that its image is dense.  Since $\mathcal H$ is dense in ${\rm Bis}(G)$, for every $S\in {\rm Bis}(G)$ and every $\epsilon>0$, there exists $T\in {\mathcal H}$ such that 
$s^*\mu(S\Delta j(T))\le \epsilon^2$.
Then we have
\[\|\Delta_{c(S)}-\Delta_{c(j(T))}\|_{L^2(G,\Sigma, s^*\mu)}=\|{\bf 1}_{ S}-{\bf 1}_{ j(T)}\|_{L^2(G, s^*\mu)}\le\epsilon.\] 
We deduce first that $J(C_c(H^{(0)}))$ is dense in $L^2(G^{(0)},\mu)$. Since $c(\G)$ and $L^2(G^{(0)},\mu)$ together generate $L^2(G,\Sigma, d^*\mu)$,  $J(C_c(H,\Lambda))$ is dense.
\vskip2mm
 By definition, the von Neumann algebra $W^*(H,\Lambda,\pi_*\mu)$ is the weak closure of the image of $C_c(H,\Sigma)$ by the left regular representation ${\rm Reg}_H(f)\xi=f*\xi$ on the Hilbert space $L^2(H,\Lambda,s^*\pi_*\mu)$. Similarly, $W^*(G,\Sigma,\mu)$ is the weak closure of the image of $C_c(G,\Sigma)$ by the left regular representation ${\rm Reg}_G$ on $L^2(G,\Sigma,s^*\mu)$. The equality \[J {\rm Reg}_H(f) J^{-1}={\rm Reg}_G(\tilde\j(f))\] for $f\in C_c(H,\Lambda)$ shows that $JW^*(H,\Lambda,\pi_*\mu)J^{-1}$ is contained in $W^*(G,\Sigma,\mu)$. Since  $j({\rm Bis}(H))$ is dense in ${\rm Bis}(G)$ for the metric associated with the symmetric measure $\nu_0$, \lemref{approximation} applied to the left regular representation (which is an integrated representation, see for example \cite[end of Section 2]{ren:twisted extensions} ) shows that $J {\rm Reg}_H(C_c(H,\Lambda))J^{-1}$ is weakly dense in $W^*(G,\Sigma,\mu)$. Therefore, we have the equality of the von Neumann algebras $J W^*(H,\Lambda,\pi_*\mu)J^{-1}$ and $W^*(G,\Sigma,\mu)$. 
\end{proof}

We can resume now our study of Cartan subalgebras in W*-algebras.

\begin{defn}
 We say that a measure Boolean groupoid $(G,\mu)$ is {\it principal} if the set of points in $G^{(0)}$ with non-trivial isotropy has measure zero.
\end{defn}
Note that this condition can be expressed as $d^*\mu(G'\setminus G^{(0)})=0$ where 
\[G'=\{\gamma\in G: r(\gamma)=d(\gamma)\}\] 
is the isotropy bundle of $G$.

\begin{lem}
 Let $(G,\mu)$ be a measure Boolean groupoid and let $A$ be a Borel subset of $G$ such that such that $d^*\mu(A\cap S)=0$ for every compact open bisection $S$. Then $d^*\mu(A)=0$. 
 \end{lem}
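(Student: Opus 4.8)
The plan is to deduce the global vanishing $d^*\mu(A)=0$ from the local vanishing $d^*\mu(A\cap S)=0$ by combining inner regularity of the measure $\nu:=d^*\mu$ with the fact that the compact open bisections form a basis for the topology of $G$. The naive approach — writing $A$ as a countable union of compact open bisections and using countable subadditivity — fails outright when $G$ is not second countable, since the family of compact open bisections is then uncountable; regularity is exactly what lets one replace this uncountable family by finite subcollections.

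First I would record the two structural facts that drive the reduction. Since $G$ is a Boolean (ample) groupoid, the compact open bisections form a basis of its topology, the same basis that underlies ${\rm Bis}(G)$. Second, $\nu=\mu\circ\tilde\lambda$ is a Radon measure on $G$: in the Hausdorff case it is the measure associated by the Riesz representation theorem to the positive functional $f\mapsto\int_{G^{(0)}}\sum_{d(\gamma)=x}f(\gamma)\,d\mu(x)$ on $C_c(G)$, and in the non-Hausdorff case one argues on the Hausdorff open pieces, transporting the inner regularity of $\mu$ through the homeomorphisms $d_{|S}$. In particular $\nu$ is inner regular, so that $\nu(A)=\sup\{\nu(K):K\subseteq A,\ K\ \text{compact}\}$.

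The heart of the argument is then a one-line compactness step. Fix a compact set $K\subseteq A$. Covering $K$ by basic open sets and extracting a finite subcover produces compact open bisections $S_1,\dots,S_n$ with $K\subseteq\bigcup_{i=1}^n S_i$. By monotonicity and subadditivity,
\[\nu(K)\le\sum_{i=1}^n\nu(K\cap S_i)\le\sum_{i=1}^n\nu(A\cap S_i)=0,\]
the last equality being the hypothesis. Hence $\nu(K)=0$ for every compact $K\subseteq A$, and inner regularity yields $\nu(A)=0$.

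I expect the main obstacle to be not the finite-cover step but the justification that compact subsets detect the measure of $A$, i.e. the inner regularity used in the final line. This is where the passage from the uncountable family of bisections to genuinely useful finite subfamilies is absorbed. One must therefore confirm that $d^*\mu$ really is inner regular on the Borel set $A$ — including when $\nu(A)=\infty$, where $\nu$ need not be $\sigma$-finite (uncountable fibres $G_x$ can occur), and when $G$ is non-Hausdorff. The cleanest way to handle the infinite-measure case is to check that $\nu$ is at least semifinite and moderated enough for inner regularity to hold on all Borel sets, or to localize to the Hausdorff compact open bisections, where the finite measures $(d_{|S})^*(\mu)$ are inner regular and one is reduced to classical Radon measure theory. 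Once this regularity is secured, the lemma is immediate.
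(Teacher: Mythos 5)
Your finite-subcover computation is correct, but the whole proof rests on the claim that $\nu=d^*\mu$ is inner regular on Borel subsets of $G$, and this is not a routine consequence of Riesz--Radon theory: in the generality of the statement it is actually false, even for Hausdorff $G$ and Radon $\mu$. Take $X=(\mathbf{Z}/2\mathbf{Z})^{\mathbf{N}}$ the Cantor group with Haar measure $\mu$, let $\Gamma$ be the same group viewed as a discrete group acting on $X$ by translation, and let $G=\Gamma\ltimes X$ (so $d(g,x)=x$, $r(g,x)=g+x$); this is a Hausdorff Boolean measure groupoid, not second countable. Put $A=\{(g,g):g\in\Gamma\}$. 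Since $A$ meets each clopen sheet $\{g\}\times X$ in a single point, $A$ is closed, hence Borel. Every compact subset of $G$ meets only finitely many sheets, so for every compact open bisection $S$ the set $A\cap S$ is finite and $d^*\mu(A\cap S)=0$ because $\mu$ is non-atomic; yet $\lambda_x(A)=1$ for every $x\in X$, so $d^*\mu(A)=1$. In particular every compact subset of $A$ is finite and null, so inner regularity of $d^*\mu$ fails exactly where your argument needs it. This also locates the error in your proposed justification: the Riesz representation theorem applied to $f\mapsto\int\sum_{d(\gamma)=x}f(\gamma)\,d\mu(x)$ does produce a Radon measure, but nothing identifies that Radon measure with $\mu\circ\tilde\lambda$ on all Borel sets --- in the example above they disagree on $A$ --- and that identification is essentially the content of the lemma, so invoking it is circular. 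Likewise, localizing to bisections only yields inner regularity of each restriction $\nu_{|S}$, which does not globalize when $G$ is not $\sigma$-compact: an uncountable union of null pieces is precisely the enemy.

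For comparison, the paper's proof makes no appeal to Radon-type regularity of $d^*\mu$. It observes that the hypothesis forces $\int \mathbf{1}_A\, f\, d(d^*\mu)=0$ for every $f\in C_c(G)$, since such $f$ are finite sums of functions $\mathbf{1}_S(b\circ d)$, and then concludes from the density of $C_c(G)$ in $L^2(G,d^*\mu)$. Within the paper this density is part of the standing framework --- $L^2(G,\Sigma,d^*\mu)$ is there \emph{defined} as the completion of $C_c(G,\Sigma)$ --- and it is a genuine fact under the countability hypotheses in force in the paper's applications (when $G$ is second countable the lemma is in any case immediate, $G$ being a countable union of compact open bisections). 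So the single statement your proof needs --- inner regularity of $d^*\mu$, equivalently that $d^*\mu$ is determined by its restrictions to bisections --- is the crux of the lemma, is what the paper encodes as $L^2$-density, and cannot be imported off the shelf from classical measure theory. To make your argument complete you would have to add such a regularity or countability hypothesis explicitly, or prove the density statement; as written, the compactness step is sound but the regularity it rests on is assumed rather than proved.
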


\begin{proof} We first observe that $d^*\mu(A\cap S)=0$ means that for $\mu$-a.e. $x$, $\lambda_x(A\cap S)=0$, where $\lambda_x$ is the counting measure on $G_x$. This implies that $\int {\bf 1}_A{\bf 1}_S(b\circ d) d(d^*\mu)=0$ for every continuous function $b$ on $G^{(0)}$. Since the elements of $C_c(G)$ are finite sums of functions of the form ${\bf 1}_S(b\circ d)$, where $S$ belongs to ${\rm Bis}(G)$ and $b$ belongs to $C(G^{(0)})$ and since $C_c(G)$ is dense in $L^2(G,d^*\mu)$, the condition $d^*\mu(A\cap S)=0$ for all $S$ in ${\rm Bis}(G)$ implies $\int {\bf 1}_A\,\xi \,d(d^*\mu)=0$ for every $\xi$ in $L^2(G,d^*\mu)$, hence $d^*\mu(A)=0$.
\end{proof}

\begin{prop}\label{principal} Let $(G,\Sigma,\mu)$ be a measure twisted Boolean groupoid. Then the subalgebra $L^\infty(G^{(0)},\mu)$ is maximal abelian in $W^*(G,\Sigma,\mu)$ if and only if $(G,\mu)$ is principal.
\end{prop}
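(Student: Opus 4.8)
\emph{Plan.} Since $B:=L^\infty(\go,\mu)$ is abelian, the statement amounts to computing the relative commutant: $B$ is maximal abelian in $W^*(G,\Sigma,\mu)$ if and only if $B'\cap W^*(G,\Sigma,\mu)=B$. The plan is to identify this relative commutant as the set of elements whose ``Fourier coefficient'' is supported on the isotropy bundle $G'$, and then read off the two directions from the size of $G'\setminus\go$.

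First I would set up the coefficient map. Represent $W^*(G,\Sigma,\mu)$ on $L^2(G,\Sigma,d^*\mu)$ via ${\rm Reg}$, and let $B$ act by $M_b\colon \xi\mapsto(b\circ r)\xi$. The vector $\xi_0={\bf 1}_{\go}$ satisfies ${\rm Reg}(f)\xi_0=f$, so it is cyclic for $W^*(G,\Sigma,\mu)$; the right convolution operators $R_g\colon\xi\mapsto\xi*g$ lie in the commutant (associativity of convolution) and satisfy $R_g\xi_0=g$, so $\xi_0$ is also cyclic for the commutant, hence separating. Thus $a\mapsto\hat a:=a\xi_0$ is an injective map of $W^*(G,\Sigma,\mu)$ into the homogeneous $L^2$-functions on $\Sigma$. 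Working on $C_c(G,\Sigma)$ and extending by normality, a direct computation (only the trivial composition survives against a function supported on units) gives, for $b\in B$,
\[\widehat{ab}=(b\circ d)\,\hat a,\qquad \widehat{ba}=(b\circ r)\,\hat a.\]
By injectivity of $a\mapsto\hat a$, an element $a$ lies in $B'\cap W^*(G,\Sigma,\mu)$ if and only if $(b\circ d-b\circ r)\hat a=0$ for every $b\in B$, that is, if and only if $\hat a$ is supported on $G'=\{\gamma:r(\gamma)=d(\gamma)\}$.

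For the implication ``principal $\Rightarrow$ maximal abelian'', suppose $d^*\mu(G'\setminus\go)=0$. Then any $a\in B'\cap W^*(G,\Sigma,\mu)$ has $\hat a$ supported, up to a null set, on $\go$, so $a\in B$ and $B$ is maximal abelian. The technical heart is upgrading ``$(b\circ d-b\circ r)\hat a=0$ for each $b$'' (a null set depending on $b$) to ``$\hat a$ vanishes a.e.\ off $\{r=d\}$''. I would localize by the preceding Lemma to a compact open bisection $S$, on which $d,r$ induce a homeomorphism $\varphi_S$ with open fixed-point complement $\{\varphi_S\neq{\rm id}\}$; around each point there a \emph{single} clopen $V$ separates $\varphi_S(x)$ from $x$, forcing $\hat a=0$ a.e.\ on the corresponding clopen piece, and this local vanishing is then promoted to the whole open set using strict positivity of $\mu$ on non-empty open sets together with the countable chain condition of the (finite) measure algebra. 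This promotion is the main obstacle, since $\go$ need not be second countable and one cannot pass to a countable cover naively.

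For the converse ``not principal $\Rightarrow$ not maximal abelian'', suppose $d^*\mu(G'\setminus\go)>0$. By the preceding Lemma there is a compact open bisection $S$ with $d^*\mu\bigl(S\cap(G'\setminus\go)\bigr)>0$. Put $E=d\bigl(S\cap G'\bigr)$, a Borel subset of $\go$ of positive $\mu$-measure, and set $a=\Delta_S M_{{\bf 1}_E}$, a product of elements of $W^*(G,\Sigma,\mu)$ (here $\Delta_S$ is the partial isometry normalizer of Section~1.3). Then $\hat a=\Delta_S{\bf 1}_E$ is supported precisely on $S\cap G'\subseteq G'\setminus\go$, so by the criterion above $a\in B'\cap W^*(G,\Sigma,\mu)$, while $\hat a$ is nonzero off the units, so $a\notin B$. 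Hence $B\subsetneq B'\cap W^*(G,\Sigma,\mu)$; replacing $a$ by whichever of its self-adjoint parts is not in $B$ gives a self-adjoint element of the relative commutant outside $B$, so $B$ is not maximal abelian.
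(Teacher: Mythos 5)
Your coefficient-map setup is sound ($\xi_0$ is indeed cyclic for the right convolution operators, hence separating), and your second half --- the only direction for which the paper gives an actual argument --- is correct and in fact cleaner than the paper's. The paper starts from the same lemma but then works with a non-singular \emph{Borel} bisection contained in $G'\setminus G^{(0)}$, lifts it to a Borel bisection $T$ of $\Sigma$, and must justify that the resulting function $\Delta_T$ defines an element of the weak closure of ${\rm Reg}(C_c(G,\Sigma))$, which is where \lemref{approximation} enters. Your element $a=\Delta_S M_{{\bf 1}_E}$ avoids all of this: $\Delta_S$ comes from a compact open bisection of $G$ (lifted to a compact continuous bisection of $\Sigma$, possible by \propref{gpdtwist to isgtwist}), so it lies in ${\rm Reg}(C_c(G,\Sigma))$, and the cutting down to the isotropy is done by the element ${\bf 1}_E$ of $B$ itself rather than by shrinking the bisection. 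One small correction: with $E=d(S\cap G')$ the support of $\hat a$ is $p^{-1}(S\cap G')$, which may meet $G^{(0)}$; this is harmless, since support in $G'$ already gives $a\in B'$, and non-vanishing on the positive-measure set $p^{-1}\bigl(S\cap(G'\setminus G^{(0)})\bigr)$ gives $a\notin B$. Your closing self-adjoint-part remark fills a step the paper leaves implicit.

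The genuine gap is exactly where you flag it, in ``principal $\Rightarrow$ maximal abelian'', and you cannot fall back on the paper there: for that direction the paper proves nothing, citing \cite[Theorem 5.1]{hah:regular} (and \cite[Theorem A.2.4]{adr:amenable}). Moreover, the two tools you propose for the promotion step are the wrong ones. Strict positivity of $\mu$ on non-empty open sets is \emph{not} a hypothesis here: it belongs to the definition of a \emph{Stonian} measure groupoid, whereas this proposition assumes only a quasi-invariant probability measure on a Boolean groupoid. And the countable chain condition does not produce a countable subcover of the open set $\{\varphi_S\neq{\rm id}\}$ by your separating clopen sets. What does close the gap is inner regularity of the (Radon) measure $\mu$: if $N=\{y\in d(S):\varphi_S(y)\neq y,\ \hat a(Sy)\neq 0\}$ had positive measure, it would contain a compact set of positive measure, which is covered by finitely many of your clopen sets $W$; since $\hat a(S\,\cdot)=0$ a.e.\ on each such $W$, this is a contradiction. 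This yields $d^*\mu\bigl((\{\hat a\neq 0\}\setminus G')\cap S\bigr)=0$ for every compact open bisection $S$, and the lemma preceding the proposition then upgrades this to $\hat a=0$ a.e.\ off $G'$, which is what your criterion requires. With inner regularity substituted for strict positivity and ccc, your argument is complete, and it is genuinely more self-contained than the paper's, which delegates this whole direction to Hahn.
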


\begin{proof} In fact, this result holds for arbitrary measure twisted groupoid (cf \cite[Theorem A.2.4]{adr:amenable}. The implication ``principal'' implies ``masa'' is part of \cite[Theorem 5.1]{hah:regular}. Let us prove the converse. Suppose that $(G,\mu)$ is not principal. Then, according to the lemma there exists a non-singular Borel bisection $S$ contained in $G'\setminus G^{(0)}$ of non-zero measure. There exists a non-singular Borel bisection $T$ of $\Sigma$ such that $p(T)=S$. We define $f(\sigma)=\overline\theta$ if $\sigma=\theta(Ts(\sigma))$ for some $\theta\in\t$ and 0 otherwise. By construction, $f$ is an element of $W^*(G,\Sigma,\mu)$. Since its support is contained in $G'$, it commutes with $L^\infty(G^{(0)},\mu)$. Therefore, $L^\infty(G^{(0)},\mu)$ is not maximal abelian.
\end{proof}

We obtain then the Feldman-Moore theorem as a corollary.

\begin{cor}\cite[Theorem 1]{fm:relations II}\label{fm} Let $A$ be a W*-algebra with a separable predual having a Cartan subalgebra $B$. Then there exists a second countable twisted Boolean groupoid $(G,\Sigma)$, a quasi-invariant measure $\mu$ and an isomorphism of $A$ onto $W^*(G,\Sigma,\mu)$ carrying $B$ onto $L^\infty(G^{(0)},\mu)$. Moreover the measure groupoid $(G,\mu)$ is principal.
 \end{cor}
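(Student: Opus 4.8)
The plan is to deduce the corollary by composing the two main theorems of the paper, \thmref{abstract} and \thmref{concrete}, and then reading off principality from \propref{principal}. First I would manufacture the faithful normal state required to invoke \thmref{abstract}. Since $A$ has separable predual it is countably decomposable, so it carries a faithful normal state $\omega$; because the conditional expectation $P:A\to B$ furnished by \defnref{W*-Cartan} is faithful and normal, the restriction $\m=\omega|_B$ is a faithful normal state on $B$. Spectral theory for the abelian W*-algebra $B$ turns $\m$ into a probability measure $\mu$ on the spectrum $X=G(B)^{(0)}$, and, as already observed after \defnref{vN}, $\mu$ is quasi-invariant under $\G(B)$. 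Applying \thmref{abstract} to $(A,B,\m)$ then yields an isomorphism $\Phi:A\to W^*(G(B),\Sigma(B),\mu)$ carrying $B$ onto $L^\infty(G(B)^{(0)},\mu)$ (the weak closure of $C(G(B)^{(0)})$), where $(G(B),\Sigma(B))$ is the Weyl twist of $(A,B)$.

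Next I would check that $(G(B),\Sigma(B),\mu)$ satisfies the hypotheses of \thmref{concrete}. By \propref{principal and Stonian} the Weyl groupoid $G(B)$ is principal and its unit space is Stonian, so $(G(B),\Sigma(B))$ is a twisted Stonian groupoid. The faithfulness of $\m$, i.e.\ its strict positivity on non-zero projections of $B$, translates through the identification of the projection lattice of $B$ with the clopen algebra of $X$ into $\mu$ being strictly positive on non-empty open sets; and the normality of $\m$ translates into $\mu$ vanishing on nowhere dense Borel sets. Hence $(G(B),\Sigma(B),\mu)$ is a twisted Stonian measure groupoid. The remaining hypothesis is separability of the Hilbert space $L^2(G(B),\Sigma(B),s^*\mu)$: up to the isometry between $s^*\mu$ and $d^*\mu$ noted in the proof of \thmref{concrete}, this is the GNS space of $\mu\circ Q$, which $\Phi$ identifies with the GNS space of the faithful normal state $\m\circ P$ on $A$; a faithful normal state on a W*-algebra with separable predual has separable GNS space, so the hypothesis holds.

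I would then apply \thmref{concrete} to $(G(B),\Sigma(B),\mu)$ to obtain a second countable twisted Boolean groupoid $(H,\Lambda)$ acting by multipliers with anchor $\pi$, together with a surjective isometry $J$ implementing an isomorphism $W^*(H,\Lambda,\pi_*\mu)\to W^*(G(B),\Sigma(B),\mu)$ carrying $L^\infty(H^{(0)},\pi_*\mu)$ onto $L^\infty(G(B)^{(0)},\mu)$. Composing with $\Phi$ produces an isomorphism of $A$ onto $W^*(H,\Lambda,\pi_*\mu)$ taking $B$ onto $L^\infty(H^{(0)},\pi_*\mu)$; relabelling $(H,\Lambda,\pi_*\mu)$ as $(G,\Sigma,\mu)$ gives exactly the asserted isomorphism with a second countable twisted Boolean groupoid and quasi-invariant measure. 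Finally, since $B$ is maximal abelian in $A$ by \defnref{W*-Cartan}, its image $L^\infty(H^{(0)},\pi_*\mu)$ is maximal abelian in $W^*(H,\Lambda,\pi_*\mu)$, and \propref{principal} then forces $(H,\pi_*\mu)$ to be principal, which is the last assertion.

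The main obstacle is not the composition, which is essentially formal once both theorems are available, but the bookkeeping that feeds \thmref{concrete}: verifying that the measure $\mu$ arising from $\m$ genuinely satisfies the strict-positivity and nowhere-dense-vanishing conditions in the definition of a Stonian measure, and that separability of the predual of $A$ is equivalent to separability of the GNS Hilbert space $L^2(G(B),\Sigma(B),s^*\mu)$. Both are standard facts about normal states on abelian W*-algebras and about W*-algebras in standard form, but they are the real content linking the abstract Stonian picture of Section~2 to the separability hypothesis of the corollary.
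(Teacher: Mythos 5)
Your proposal is correct and follows essentially the same route as the paper: apply \thmref{abstract} to realize $(A,B)$ as the W*-Cartan pair of its Weyl twist, verify separability of the GNS space via the faithful normal state $\m\circ P$, apply \thmref{concrete} to pass to a second countable twisted Boolean groupoid, and deduce principality from maximal abelianness via \propref{principal}. The only (immaterial) differences are that the paper obtains the faithful normal state directly from the separable predual of $B$ rather than by restricting one from $A$, and that your explicit verification of the Stonian-measure conditions is left implicit in the paper's proof.
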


\begin{proof} Since $B$ has also a separable predual, it is countably decomposable. Therefore, it has a faithful normal state $\m$. The abstract realization gives the isomorphism of the Cartan pairs $(A,B)$ and $W^*(G(B),\Sigma(B),\underline\mu), L^\infty(\hat B,\underline\mu))$ where $\underline\mu$ is the measure extending $\m$. Since the GNS representation defined by ${\m}\circ p$ is faithful, the Hilbert space $L^2(G(B),\Sigma(B),s^*\underline\mu)$ is separable. We apply \thmref{concrete} to construct a second countable twisted Boolean groupoid $(G,\Sigma)$ acting on $(G(B),\Sigma(B))$ by multipliers and such that the ample semigroup of $G$ is dense  in the Weyl pseudogroup $\G(B)$ for the metric associated with $r^*\underline\mu+d^*\underline\mu$. Then \thmref{concrete} says that the pair $W^*(G(B),\Sigma(B),\underline\mu), L^\infty(\hat B,\underline\mu))$ is isomorphic to the pair $W^*(G,\Sigma,\mu), L^\infty(G^{(0)},\mu))$ where $\mu$ is the image of $\underline\mu$ by the anchor map of the action. Since $B$ is maximal abelian, so is $L^\infty(G^{(0)},\mu)$. Therefore, $(G,\mu)$ is a principal measure groupoid. 
\end{proof}

\appendix

\section {Non-commutative Stone equivalence}

Classical Stone duality between Boolean algebras and Boolean spaces admits a generalization to an equivalence between the category of Boolean inverse semigroups and the category of Boolean groupoids. The arrows of the first category are the Boolean inverse semigroup morphisms. The arrows of the second category are the multiplier actions with proper anchor, defined below. The equivalence will be extended to the twisted categories. We refer the reader to \cite{law:inverse} for the theory of inverse semigroups and to \cite{ren:approach} or \cite{pat:gpd} for the theory of locally compact groupoids. We recall some basic definitions and properties. 

\begin{defn}
 An {\it inverse semigroup} $\G$ is a semigroup such that for each $s\in\G$ there
exists a unique $t\in\G$ such that $sts=s$ and $tst=t$. One calls $t$ the
inverse of $s$ and writes
$t=s^*$. One denotes $\G^{(0)}=\{e\in\G:e^2=e\}$ the set of idempotents.
\end{defn}

Note that  idempotents $e$ are necessarily self-adjoint: $e^*=e$. We write $d(s)=s^*s$ and $r(s)=ss^*$; they belong to $\G^{(0)}$. The relation $s\le t$ if $s=te$ for some $e\in\G^{(0)}$ is an order relation on $\G$. The poset $(\G^{(0)},\le)$ is a meet semilattice with $e\wedge f=ef$. Assuming that $\G$ has a zero element, we say that a family $(s_i)$ of elements of $\G$ is orthogonal if for all $i\not=j$, $s^*_is_j=0$.
\begin{defn}\label{boolean isg}  An inverse semigroup $\G$ is called {\it Boolean} if
\begin{enumerate}
 \item $\G^{(0)}$ is a Boolean algebra;
 \item every orthogonal pair $(s,t)$ in $\G$ has a join $s\vee t$ (also written $\sup(s,t)$) in $\G$.
\end{enumerate}
\end{defn}

When $\G$ is a Boolean inverse semigroup, we implicitly assume that $\G^{(0)}$ is the Boolean algebra $\B(X)$, where $X$ is the dual space of $\G^{(0)}$. Given $s\in\G$, we denote by ${\rm dom}(s)$ [resp. ${\rm ran}(s)$] rather than $d(s)$ [resp. $r(s)$] the compact open subset of $X$ which is the support of the idempotent $d(s)$ [resp. $r(s)$].
 
\begin{defn} The {\it category of Boolean inverse semigroups} is defined as the category which has the Boolean inverse semigroups as objects and the Boolean inverse semigroups morphisms as arrows.
\end{defn}

\begin{defn}
 An {\it \'etale groupoid} $G$ is a locally compact groupoid such that its range and domain maps $r,d:G\to G^{(0)}$ are local homeomorphisms.
\end{defn}

\begin{defn}\label{boolean gpd}  A {\it Boolean groupoid} is an \'etale groupoid $G$ whose unit space $G^{(0)}$ is locally compact Hausdorff and totally disconnected.  
\end{defn}

The definition of the arrows of the category of Boolean groupoids requires some familiar notions of the theory of locally compact groupoids which we recall from \cite[Definition 3]{ren:rieffel} and \cite[Definition 2.1]{hol:I} and which can also be found in the recent preprint \cite{tay:functoriality}. A left {\it action of a groupoid} $H$ on a set $X$ is given by a map $\rho: X\to H^{(0)}$, called {\it anchor} or {\it moment map}, and an {\it action map} $H*X\to X$, where $H*X$ is the set of composable pairs $(\eta,x)\in H\times X$ with $d(\eta)=\rho(x)$. The image of $(\eta,x)$ is denoted by $\eta x$. By definition $\rho(x)x=x$ for all $x\in X$, $\rho(\eta x)=r(\eta)$ for all $(\eta,x)\in H*X$ and $\eta(\eta'x)=(\eta\eta')x$ for all $(\eta,\eta',x)$ in $H*H*X$. One says then that $X$ is a left $H$-space. One defines similarly a right $G$-space. If $H$ and $G$ are two groupoids and $X$ is both a left $H$-space and a right $G$-space such that $\eta(x\gamma)=(\eta x)\gamma$ for all $(\eta,x\gamma)\in H*X*G$, one says that $X$ is an {\it $(H,G)$-space}. In the locally compact setting, we assume that $X$ and $G$ are locally compact, the moment map is continuous (openness which is assumed in \cite{ren:rieffel} is not needed here) and the action map is continuous. One says that the right $G$-space $X$ is {\it free} if the map from $X*G$ to $X\times X$ sending $(x,\gamma)$ to $x\gamma$ is one-to-one, {\it proper} if this map is  proper and {\it principal} if it is free and proper. A {\it correspondence} is an $(H,G)$-space $X$ such that the $G$-action is principal. The following particular case appears in \cite{bs:morphisms}, \cite{mz:categories} and in the recent preprint \cite{tay:functoriality}.

\begin{defn}\label{multiplier action} A {\it multiplier action} of a locally compact groupoid $H$ on a locally compact groupoid $G$ is a correspondence $X$ from $H$ to $G$ where $X=G$ and $G$ acts by multiplication on the right.
\end{defn}

Multiplier actions are called actors in \cite{mz:categories,tay:functoriality}. Proposition 4.16 of \cite{mz:categories} gives the following decomposition of a multiplier action. If $X$ is a locally compact left $H$-space, then $H$ acts by multipliers on the semidirect product $G=H\ltimes X$. The action is given by $\eta(\eta',x)\defequal(\eta\eta',x)$. On the other hand, if $H$ and $G$ have the same unit space and $F:H\to G$ is a groupoid homomorphism whose restriction to the unit space is the identity map, then
$H$ acts on $G$ by multipliers according to the action map $\eta\gamma\defequal F(\eta)\gamma$. The most general case is a composition of these two cases. Suppose indeed that $H$ acts on $G$ by multipliers. Then the map $\rho:G\to H^{(0)}$ factors through the range map of $G$ because of the equality $\eta\gamma=\eta r(\gamma)\gamma$. Defining $\rho^{(0)}$ as the restriction of $\rho$ to $G^{(0)}$, we have $\rho(\gamma)=\rho^{(0)}\circ r(\gamma)$. If we let $H$ act on $G^{(0)}$ by the moment map $\rho^{(0)}$ and the action map defined by $\eta\cdot x=r(\eta x)$, then the map $F$ of the semidirect product $H\ltimes G^{(0)}$ into $G$ sending $(\eta,x)$ to $\eta x$ is a groupoid homomorphism whose restriction to the unit space is the identity map. One can retrieve the multiplier action of $H$ on $G$ by the formula \[\eta\gamma=F(\eta,r(\gamma))\gamma.\]

\begin{defn}\label{proper anchor} We shall say that a multiplier action of a locally compact groupoid $H$ on a locally compact groupoid $G$ has {\it proper anchor} if the moment map $\rho^{(0)}: G^{(0)}\to H^{(0)}$ is a proper map.
\end{defn}

\vskip2mm
When $H$ and $G$ are \'etale with respective Haar systems $\beta$ and $\lambda$ of counting measures, a multiplier action of $H$ on $G$ gives a multiplier action of the $*$-algebra $C_c(H)$ on the $*$-algebra $C_c(G)$ according to the convolution product
\[f*_\beta g(\gamma)=\int f(\eta)g(\eta^{-1}\gamma)d\beta^{\rho(\gamma)}(\eta)\]
for $f\in C_c(H)$ and $g\in C_c(G)$. This is a particular case of the construction of correspondences given by Holkar (see \cite{hol:I} and \cite{ren:rieffel}). If moreover the moment map $\rho^{(0)}: G^{(0)}\to H^{(0)}$ is proper, one has a $*$-homomorphism of $C_c(H)$ into $C_c(G)$ given by
\[\tilde \j(f)(\gamma)=\sum_{\eta\in H:\eta d(\gamma)=\gamma} f(\eta)=f*_\beta{\bf 1}_{G^{(0)}}(\gamma).\] 
In fact, we still have $\tilde\j(f)=f*_\beta h$ for any $h\in C_c(G^{(0)})$ which takes the value 1 on $(\rho^{(0)})^{-1}(s({\rm supp}(f))$.

\vskip2mm
Let $G,H,K$ be locally compact groupoids. If $H$ acts on $G$ by multipliers with moment map $\rho$ and $K$ acts on $H$ with moment map $\sigma$, then $K$ acts on $G$ by multipliers with moment map $\sigma^{(0)}\circ \rho$ and action $\kappa\gamma\defequal (\kappa\rho(\gamma))\gamma$ for $(\kappa,\gamma)\in K\times G$ such that $\sigma^{(0)}\circ \rho(\gamma)=s(\kappa)$. We note that if $\rho^{(0)}$ and $\sigma^{(0)}$ are proper, so is $\sigma^{(0)}\circ\rho^{(0)}$. This composition is a particular case of the composition of groupoid correspondences given in \cite{ren:rieffel} and \cite{hol:II}. By choosing as objects the locally compact groupoids and as arrows the multiplier actions, one gets a category, rather than a weak category.

\begin{defn} The {\it category of Boolean groupoids} is the category whose objects are the Boolean groupoids and whose arrows are the multiplier actions with proper anchor.
\end{defn}

Given a Boolean groupoid $G$, we define
\[{\rm Bis}(G)\defequal\{\hbox{ compact open bisections of}\quad G\}.\]

\begin{prop} Endowed with the set-theoretical product, the set ${\rm Bis}(G)$  of compact open bisections of a Boolean groupoid $G$ is a Boolean inverse semigroup, called the ample inverse semigroup of $G$. 
\end{prop}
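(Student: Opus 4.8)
The plan is to show that the set-theoretical product $ST=\{\gamma\eta:\gamma\in S,\ \eta\in T,\ d(\gamma)=r(\eta)\}$ and the set-theoretical inverse $S^{-1}=\{\gamma^{-1}:\gamma\in S\}$ are internal to ${\rm Bis}(G)$, that they satisfy the inverse-semigroup axioms, and that ${\rm Bis}(G)$ then fulfils the two defining conditions of a Boolean inverse semigroup; the empty bisection $\emptyset$ plays the role of the zero element.

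First I would prove the closure statement, which I expect to be the main obstacle: if $S,T\in{\rm Bis}(G)$ then $ST$ is again a compact open bisection. Injectivity of $r$ and $d$ on $ST$ is immediate from the bisection property: if $r(\gamma_1\eta_1)=r(\gamma_2\eta_2)$ then $r(\gamma_1)=r(\gamma_2)$ forces $\gamma_1=\gamma_2$, whence $r(\eta_1)=r(\eta_2)$ and $\eta_1=\eta_2$, and symmetrically for $d$. For openness I would use that in an \'etale groupoid the multiplication $G*G\to G$ is a local homeomorphism, hence open; since $(S\times T)\cap (G*G)$ is open in $G*G$, its image $ST$ is open, and because $r,d$ are local homeomorphisms that are injective on the open set $ST$, their restrictions $r|_{ST}$ and $d|_{ST}$ are homeomorphisms onto open subsets of $G^{(0)}$. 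Compactness is the delicate point in the possibly non-Hausdorff case: since $G^{(0)}$ is Hausdorff the diagonal $\Delta\subseteq G^{(0)}\times G^{(0)}$ is closed, so $G*G=(d\times r)^{-1}(\Delta)$ is closed in $G\times G$; hence $(S\times T)\cap(G*G)$ is a closed subset of the compact set $S\times T$ and is therefore compact, and $ST$ is its continuous image. Closure under inversion is clear, as inversion is a homeomorphism exchanging $r$ and $d$.

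Next I would verify the inverse-semigroup axioms. Associativity of the product is inherited from the associativity of the groupoid multiplication. A short computation using injectivity of $r|_S$ and $d|_S$ gives $SS^{-1}=r(S)$ and $S^{-1}S=d(S)$, both compact open subsets of $G^{(0)}$, and then $SS^{-1}S=S$ and $S^{-1}SS^{-1}=S^{-1}$, so ${\rm Bis}(G)$ is regular with $S^{-1}$ an inverse of $S$. The idempotents are exactly the compact open subsets of $G^{(0)}$: each such $U$ satisfies $UU=U$, while any idempotent $E$ equals $EE^{-1}=r(E)\subseteq G^{(0)}$. Two such idempotents satisfy $UV=U\cap V=VU$, so the idempotents commute; a regular semigroup with commuting idempotents is an inverse semigroup, which in particular makes $S^{-1}=S^*$ the unique inverse.

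Finally I would check the two Boolean conditions. The idempotent semilattice is the lattice $\B(G^{(0)})$ of compact open subsets of the Boolean space $G^{(0)}$, a generalized Boolean algebra, which is the first condition. For the second, I would first note that the canonical order coincides with inclusion: if $S\subseteq T$ then $S=T\,d(S)$ by injectivity of $d|_T$, so $S\le T$. Orthogonality of $S$ and $T$ unwinds to $r(S)\cap r(T)=\emptyset$ and $d(S)\cap d(T)=\emptyset$; under these disjointness conditions $r$ and $d$ remain injective on $S\cup T$, so $S\cup T$ is again a compact open bisection, and being the smallest bisection containing both it is the join $S\vee T$. This shows that ${\rm Bis}(G)$ is a Boolean inverse semigroup.
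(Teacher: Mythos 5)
Your proposal is correct in substance, but it takes a much more self-contained route than the paper. The paper's own proof outsources the entire inverse-semigroup structure to the literature (it cites \cite[Definition I.2.10]{ren:approach} and \cite[Proposition 2.2.4]{pat:gpd} for the fact that the compact open bisections form an inverse semigroup) and only verifies the specifically Boolean features: the idempotents are the compact open subsets of $G^{(0)}$, their product is intersection, hence the order is inclusion; an orthogonal pair $(S,T)$, which here means $r(S)\cap r(T)=\emptyset$ and $d(S)\cap d(T)=\emptyset$, has the compact open bisection $S\cup T$ as its join; and the compact open subsets of $G^{(0)}$ form a Boolean algebra. You instead prove exactly the part the paper cites: closure of ${\rm Bis}(G)$ under product and inverse, and the inverse-semigroup axioms. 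Your compactness argument is the most valuable addition: observing that $G*G=(d\times r)^{-1}(\Delta)$ is closed in $G\times G$ because $G^{(0)}$ is Hausdorff gives compactness of $(S\times T)\cap(G*G)$, hence of $ST$, without any Hausdorffness assumption on $G$ itself --- a point that matters here, since the paper explicitly allows non-Hausdorff Boolean groupoids, and the analogous statement for compact continuous bisections in Appendix A is terser on precisely this point. Your openness step rests on the standard fact that multiplication in an \'etale groupoid is a local homeomorphism; this is true, but it is itself a small lemma that deserves a citation or a one-line proof (e.g.\ left translation by an open bisection $S$ is a homeomorphism from $r^{-1}(d(S))$ onto $r^{-1}(r(S))$).

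One step as written is circular and should be repaired. You justify the inclusion of idempotents in $G^{(0)}$ by ``any idempotent $E$ equals $EE^{-1}=r(E)$,'' but at that stage you have not yet proved that ${\rm Bis}(G)$ is an inverse semigroup, so you cannot invoke self-adjointness of idempotents or uniqueness of inverses; indeed you use this very characterization to prove that idempotents commute, which is what yields the inverse-semigroup property. The fix is direct and uses only the bisection property of $E$: if $EE=E$ and $\gamma\in E$, write $\gamma=\alpha\beta$ with $\alpha,\beta\in E$; then $r(\gamma)=r(\alpha)$ forces $\gamma=\alpha$ by injectivity of $r|_E$, whence $\beta=d(\gamma)\in E$; since $\gamma$ and $d(\gamma)$ both lie in $E$ and have the same image under $d|_E$, injectivity of $d|_E$ gives $\gamma=d(\gamma)\in G^{(0)}$. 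With this replacement your argument is complete.
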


\begin{proof} It is well known that the compact open bisections of a Boolean groupoid form an inverse semigroup $\G$ (see \cite[Definition I.2.10]{ren:approach} and \cite[Proposition 2.2.4]{pat:gpd}). The idempotents of $\G$ are the compact open subsets of $G^{(0)}$. If $E$ and $F$ are compact open subsets of $G^{(0)}$, then $EF=E\cap F$. One deduces that the order of the inverse semigroup $\G$ is the inclusion. As observed in \cite[page 142]{ren:approach}, if $(S,T)$ is an orthogonal pair of compact open bisections, which means here that $r(S)\cap r(T)=\emptyset$ and $d(S)\cap d(T)=\emptyset$, then $S\cup T$ is a compact open bisection. This is necessarily the join $S\vee T$ of $S$ and $T$. Moreover, the compact open subsets of $G^{(0)}$ form a Boolean algebra.
\end{proof}

The definition of the groupoid of germs of a Boolean inverse semigroup $\G$ has been given in \defnref{germ}:
\[{\rm Germ}(\G)\defequal \G*X/\sim=\{[s,x]: s \in\G, x\in{\rm dom}(s)\}.\]
where $X$ is the dual space of the Boolean algebra $\G^{(0)}$ and $\G$ acts on $X$ by dualizing its action on $\G^{(0)}$ by conjugation $e\mapsto ses^*$. Since, as any groupoid of germs, it is \'etale and its unit space is a Boolean space, ${\rm Germ}(\G)$ is a Boolean groupoid.

\begin{thm}\label{equivalence} The constructions of the groupoid of germs of a Boolean inverse semigroup and of the ample semigroup of a Boolean groupoid are functorial and give an equivalence of the category of Boolean inverse semigroups and the category of Boolean groupoids.
\end{thm}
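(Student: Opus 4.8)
The plan is to read the theorem as the two assertions that $\mathrm{Germ}$ and $\mathrm{Bis}$ extend to functors between the two categories and that they are mutually quasi-inverse. Since the preceding propositions already establish that $\mathrm{Germ}(\G)$ is a Boolean groupoid and $\mathrm{Bis}(G)$ a Boolean inverse semigroup, I would reduce the statement to three tasks: (a) produce canonical object-level isomorphisms $\eta_\G\colon\G\to\mathrm{Bis}(\mathrm{Germ}(\G))$ and $\varepsilon_G\colon\mathrm{Germ}(\mathrm{Bis}(G))\to G$; (b) define $\mathrm{Germ}$ and $\mathrm{Bis}$ on arrows and check functoriality; and (c) verify that $\eta$ and $\varepsilon$ are natural. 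Granting these, the standard criterion for an equivalence of categories closes the argument.

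For (a) I would set $\eta_\G(s)=S(s)=\{[s,x]:x\in\mathrm{dom}(s)\}$. This is a basic compact open bisection of $\mathrm{Germ}(\G)$, and the germ operations $[s,\varphi_t(x)][t,x]=[st,x]$ make $\eta_\G$ a $*$-homomorphism preserving the order. Injectivity is where the Boolean hypothesis is essential: from $S(s)=S(t)$ one gets $\mathrm{dom}(s)=\mathrm{dom}(t)$ and, for each $x$, an idempotent $e$ with $x\in\mathrm{dom}(e)$ and $se=te$; covering $\mathrm{dom}(s)$ by finitely many such sets and refining to an orthogonal partition $(f_i)$ of $d(s)$ in $\goo$ gives $sf_i=tf_i$, whence $s=\bigvee_i sf_i=\bigvee_i tf_i=t$ since multiplication distributes over orthogonal joins. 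For surjectivity I would write an arbitrary compact open bisection as a finite union of basic sets $S(s_i,U_i)$ with $U_i$ compact open, rewrite each as $S(s_ie_{U_i})$, disjointify the domains using the bisection property, and take the join in $\G$. For $\varepsilon_G$ I would first identify the unit space of $\mathrm{Germ}(\mathrm{Bis}(G))$ with $G^{(0)}$ by classical Stone duality for the Boolean space $G^{(0)}$, so that its elements are classes $[S,x]$ with $S\in\mathrm{Bis}(G)$ and $x\in d(S)$, and then set $\varepsilon_G([S,x])$ to be the unique $\gamma\in S$ with $d(\gamma)=x$. That $\varepsilon_G$ is a bijective groupoid homomorphism inducing the identity on units, carrying the basic set $S(S,U)$ onto $S\cap d^{-1}(U)$ and hence a homeomorphism, is a routine check using only that $G$ is étale with a basis of compact open bisections.

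For (b), a Boolean inverse semigroup morphism $\varphi\colon\G\to\mathcal H$ restricts to a Boolean algebra morphism $\goo\to\mathcal H^{(0)}$, which dualizes to a proper continuous map $\pi\colon Y\to X$ of Stone spaces; I would promote $\varphi$ to a groupoid homomorphism $\mathrm{Germ}(\G)\ltimes Y\to\mathrm{Germ}(\mathcal H)$ over $\pi$ sending the pair $([s,\pi(y)],y)$ to $[\varphi(s),y]$, which by the decomposition of multiplier actions recalled after \defnref{multiplier action} is precisely a multiplier action $\mathrm{Germ}(\varphi)$ with proper anchor. In the other direction, a multiplier action of $G$ on $H$ pushes a compact open bisection of $G$ forward along the homomorphism $F\colon G\ltimes H^{(0)}\to H$ of that decomposition to a compact open bisection of $H$; I would check that the resulting map $\mathrm{Bis}(G)\to\mathrm{Bis}(H)$ respects products, involutions and orthogonal joins. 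Compatibility with identities is immediate, and compatibility with composition follows from the composition law for multiplier actions recalled in the Appendix.

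Finally, (c) is a diagram chase: with the explicit formulas of (a) and (b) the naturality squares for $\eta$ and $\varepsilon$ commute by direct substitution, yielding natural isomorphisms $\eta\colon\mathrm{Id}\Rightarrow\mathrm{Bis}\circ\mathrm{Germ}$ and $\varepsilon\colon\mathrm{Germ}\circ\mathrm{Bis}\Rightarrow\mathrm{Id}$. I expect the main obstacle to be (b), namely translating faithfully between the two genuinely different notions of arrow — algebraic morphisms on the inverse-semigroup side and multiplier actions, with their semidirect-product composition, on the groupoid side — and verifying that this translation respects composition. By contrast the object-level isomorphisms of (a) are the Boolean-inverse-semigroup refinement of ordinary Stone duality and are comparatively routine once the distributivity of multiplication over orthogonal joins is invoked for injectivity and for the disjointification step.
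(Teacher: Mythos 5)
Your proposal follows essentially the same route as the paper's proof: the same unit $s\mapsto S(s)$ (with injectivity via a finite cover refined to an orthogonal partition and distributivity of multiplication over orthogonal joins, and surjectivity via compactness, disjointification and orthogonal joins), the same counit $[S,x]\mapsto Sx$, and the same covariant treatment of arrows (compact open bisections pushed forward along a multiplier action in one direction, and a multiplier action built from a semigroup morphism via the semidirect-product decomposition in the other). The only difference is one of emphasis: you state the naturality verification explicitly, which the paper, like the composition-of-arrows check, leaves as an ``easy but tedious'' omitted step.
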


\begin{proof} We recall that ${\rm Germ}(\G)$ denotes the groupoid of germs of a Boolean inverse semigroup $\G$ and ${\rm Bis}(G)$ the inverse semigroup of compact open bisections of a Boolean groupoid $G$. We first show that ${\rm Bis}$ and ${\rm Germ}$ are functors. Suppose that $H$ acts on $G$ with proper anchor  $\rho^{(0)}$. Let $S$ be a compact open bisection of $H$. Define
\[j(S)\defequal SG^{(0)}=\{\eta x: \eta\in S, x\in G^{(0)},\, \rho^{(0)}(x)=d(\eta)\}.\]
Since, necessarily $\eta=S\rho^{(0)}(x)$, $j(S)$ is a section for the domain map of $G$. The relation $(\eta x)^{-1}=\eta^{-1}(\eta.x)$ shows that $j(S)^{-1}=j(S^{-1})$. Therefore, $j(S)$ is also a section for the range map of $G$. Since $j(S) $ is the image of the compact open set $S*(\rho^{(0)})^{-1}(s(S))$ by the action map $H*G\to G$ which is continuous and open (according to \cite[Lemma 2.4]{tay:functoriality}), it is compact and open. Thus $j$ is a map from ${\rm Bis}(H)$ to ${\rm Bis}(G)$. We also have $j(ST)=STG^{(0)}=Sj(T)=SG^{(0)}j(T)=j(S)j(T)$. Therefore, $j$ is an inverse semigroup homomorphism from ${\rm Bis}(H)$ to ${\rm Bis}(G)$. Its restriction to the idempotents, which is $(\rho^{(0)})^{-1}$, is a Boolean algebra homomorphism. Let $j$ be a homomorphism from a Boolean inverse semigroup ${\mathcal H}$ into a Boolean inverse semigroup $\G$. Let $H$ and $G$ the respective groupoids of germs. The Boolean algebra homomorphism $j^{(0)}$ gives a continuous and proper map $\rho^{(0)}: G^{(0)}\to H^{(0)}$. Then we define a multiplier action of $H$ on $G$ with proper anchor $\rho^{(0)}$ by
\[ [s,\rho^{(0)}\circ\varphi_t(x)][t,x]=[j(s)t,x]\]
for $s\in{\mathcal H}$, $t\in\G$ and $x\in{\rm dom}(t)$, where we have used the notation given before \defnref{germ}. The algebraic properties of a multiplier actions are easily verified. Since the restriction of the action map to the product of  the open bisections $S(s)\subset H$ and $S(t)\subset G$ is clearly continuous, so is the action map. We skip the easy but tedious task to check that these constructions respect the composition of arrows. 
\vskip2mm
Let us show that the Boolean groupoid $G$ is isomorphic to the groupoid of germs ${\rm Germ}(\G)$ of its ample semigroup $\G={\rm Bis}(G)$. The map $f: {\rm Germ}(\G)\to G$ sending $[S,x]$ to $Sx$, where $S\in\G$ and $x\in d(S)$, is well defined: if $(S,x)\sim (T,x)$, then $Sx=Tx$. The map is injective: if $Sx=Ty$, then $x=d(Sx)=d(Ty)=y$. Then $d(S\cap T)$ is an open neighborhood of $x$. If $E$ is a compact open neighborhood of $x$ contained in $d(S\cap T)$, we have $SE=TE$, hence $[S,x]=[T,y]$. The map is surjective: given $\gamma\in G$, there exists a compact open bisection $S$ containing $\gamma$. Then $\gamma=Sd(\gamma)=f([S,d(\gamma)])$. The definition of the product in ${\rm Germ}(\G)$, namely
\[ [S, \varphi_T(x)][T,x]=[ST,x]\]
makes clear that $f$ is a groupoid homomorphism. If $S$ is a compact open bisection of $G$, $f^{-1}(S)=\{[S,x], x\in d(S)\}$ is a basic open set in ${\rm Germ}(\G)$. Therefore, $f$ is a homeomorphism.  One deduces that $f$ is an isomorphism of topological groupoids. 
\vskip2mm
Let us show that the ample semigroup of the groupoid of germs $G={\rm Germ}(\G)$ of a Boolean inverse semigroup $\G$ is isomorphic to $\G$. We recall that for all $s\in\G$, \[S(s)=\{[s,x]: x(s^*s)=1\}\] is a bisection of $G$ which is open by definition of the topology of germs. Since it is homeomorphic to the compact open subset ${\rm dom}(s)$ of the dual space $X$ of $\G^{(0)}$, it is also compact. The property $S(s)S(t)=S(st)$ is easily checked: an element of the first set is of the form $[s,y][t,x]=[st,x]$ where $x\in{\rm dom}(t)$ and $y=\varphi_t(x)\in{\rm dom}(s)$; an element of the second set is of the form $[st,x]$ where $x\in{\rm dom}(st)$. Since the condition $x((st)^*st)=1$ can be written $x\in{\rm dom}(t)$ and $\varphi_t(x)(s^*s)=1$, the two sets agree. If $e\in\G^{(0)}$, $S(e)$ is the compact open subset ${\rm dom}(e)$ of $X$ corresponding to $e$. Let us show that the map $s\mapsto S(s)$ is injective. Assume that $S(s)=S(t)$. First, this implies that ${\rm dom}(s)={\rm dom}(t)$. For all $x\in {\rm dom}(s)$, we have $[s,x]=[t,x]$, hence there exists $e\in\G^{(0)}$ such that $x(e)=1$ and $se=te$. Being compact, ${\rm dom}(s)$ has a finite cover $(e_i)$ such that $se_i=te_i$ for all $i$. Since $\G^{(0)}$ is a Boolean algebra, we can assume that this cover is a partition of ${\rm dom}(s)$. If $i\not= j$, the pair $(se_i,se_j)$ is orthogonal. The join $se_i\vee se_j$ exists and we have $s(e_i\vee e_j)=se_i\vee se_j$. Here is the standard proof (see \cite{law:inverse}) of this equality:
\begin{align*}
e_i\le e_i\vee e_j &\Rightarrow se_i\le s(e_i\vee e_j)\cr
&\Rightarrow se_i\vee se_j\le s(e_i\vee e_j)
\end{align*}
With $t=se_i\vee se_j$, we also have
\begin{align*}
se_i, se_j\le t&\Rightarrow e_i,e_j\le s^*t\cr
&\Rightarrow e_i\vee e_j\le s^*t\cr
&\Rightarrow s(e_i\vee e_j)\le ss^*t\le t\cr
\end{align*}
We deduce that
\[s=s(e_1\vee\ldots\vee e_n)=se_1\vee\ldots\vee se_n=te_1\vee\ldots\vee te_n=t(e_1\vee\ldots\vee e_n)=t.\]
Let us show that the map $s\mapsto S(s)$ is surjective. Let $S$ be a compact open bisection of $G$. Let $\gamma\in S$. Since $\{S(s), s\in\G\}$ is a base of open sets of $G$, there exists $s\in\G$ such that $S(s)$ contains $\gamma$ and is contained in $S$. By compactness of $S$, there exists a finite family $(s_i)$ in $\G$ such that $S$ is the union of the family$(S(s_i))$. Then $S(s_i)=SE_i$, where $E_i={\rm dom}(s_i)$. Since $\G^{(0)}$ is a Boolean algebra, we can assume that the family $(s_i^*s_i)$ is orthogonal, or equivalently, that the family $(E_i)$ is a partition of $d(S)$. Then $S$ is the disjoint union of the family$(S(s_i))$ and the family $(s_i)$ is orthogonal. Since $\sup s_i$ exists, $S=S(\sup s_i)$.

\end{proof}

\begin{rem} A very similar result is given in \cite[Propositions 5.3 and 5.4]{exe:isg} (see also \cite{be:fell, bem:isa}) for arbitrary inverse semigroups $S$ and \'etale groupoids $G$. There, ${\rm Bis}(G)$ is the inverse semigroup of the open bisections of $G$ while ${\rm Gr}(S)$ is the groupoid of germs of the action of $S$ on the character space of the semi-lattice $S^{(0)}$. 
\end{rem}

\thmref{equivalence} shows that Boolean inverse semigroups and Boolean groupoids have essentially the same theory. Any property of the former corresponds to a property of the latter and one can establish a dictionary of these properties.  Here are some examples:

\begin{prop}\label{properties}  Let $G$ be a Boolean groupoid, $\G={\rm Bis}(G)$ and $X=G^{(0)}$.
\begin{enumerate}
 \item $G$ is {\it effective} (i.e. the interior of its isotropy is reduced to its units) if and only $\G$ is {\it fundamental} (i.e. is a pseudogroup of transformations of $X$);
 \item $G$ is a group bundle (i.e. $r=d$) if and only if $\G$ is {\it Clifford} (i.e. $r=d$);
 \item $G$ is second countable if and only if $\G$ is countable;
 \item $G$ is Hausdorff if and only $\G$ is closed under finite intersections.
 \end{enumerate}
\end{prop}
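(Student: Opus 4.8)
The plan is to establish the four equivalences of \propref{properties} by transporting each groupoid-theoretic property across the dictionary provided by \thmref{equivalence}, working from the basic structure of ${\rm Germ}(\G)$ and ${\rm Bis}(G)$. Throughout I would identify $\G$ with ${\rm Bis}(G)$ and recall that a bisection $S$ is an idempotent precisely when $S\subset G^{(0)}$, while the order is inclusion and the join of an orthogonal pair is the disjoint union. The guiding principle is that an element $S\in\G$ is a centralizer (commutes with every idempotent of $\G$, i.e. $SE=ES$ for all $E\in\B(X)$) if and only if $S$ is contained in the isotropy bundle $G'$; this bridges the two formalisms.

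For (i), I would observe that $\G$ is a pseudogroup of transformations exactly when its only centralizers are idempotents, which by the bridging principle means no nonempty compact open bisection is contained in $G'\setminus G^{(0)}$. Since the compact open bisections form a base for the topology of the \'etale groupoid $G$, this is equivalent to the interior of $G'$ being reduced to $G^{(0)}$, which is the definition of effectiveness. For (ii), the condition $r=d$ on $G$ says $G=G'$ as a set, and hence every compact open bisection lies in $G'$, so by the same principle every element of $\G$ is a centralizer, i.e. $\G$ is Clifford; conversely if $\G$ is Clifford then every $S$ lies in the interior of $G'$, and since these bisections cover $G$ one gets $G=G'$, that is $r=d$. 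For (iii), I would use that second countability of the \'etale groupoid $G$ is equivalent to second countability of the Boolean base space $X$ together with a countable base of compact open bisections; by Stone duality the countability of $\B(X)$ matches the countability of the idempotents, and a standard argument shows one can generate all compact open bisections from a countable family, so $\G$ is countable, and conversely a countable $\G$ furnishes a countable base for $G$.

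The main obstacle is item (iv), the Hausdorff criterion, since it is the least formal of the four. Here I would argue that $G$ is Hausdorff if and only if $G^{(0)}$ is closed in $G$ (as $G$ is \'etale), and then translate closedness of the diagonal into a statement about intersections of bisections. Given two compact open bisections $S,T$, their set-theoretic intersection $S\cap T$ is automatically a bisection, but it need not be compact open unless $G^{(0)}$ is closed: the point where germs of $S$ and $T$ agree is controlled by whether $S\cap T$ is open, and in the \'etale setting $S\cap T$ is compact open for all $S,T$ precisely when $G$ is Hausdorff. Concretely, if $G$ is Hausdorff then each $S\cap T$ is a closed subset of a compact set and open as an intersection of open bisections along the diagonal, hence compact open, so $\G$ is closed under finite intersections; conversely, if $\G$ is closed under finite intersections I would take $S=T=G^{(0)}\cap V$ for a compact open bisection $V$ meeting the diagonal and show that stability of intersections forces $G^{(0)}$ to be locally closed and then closed in $G$, giving the Hausdorff property. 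I expect the forward direction to be routine and the converse to require the careful local argument just sketched, reproducing the kind of reasoning already used in the proof of \lemref{Q}(vi).
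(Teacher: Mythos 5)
Your arguments for (i)--(iii) are essentially correct, and they are more self-contained than the paper's own proof, which simply cites \cite[Corollary 3.3]{ren:cartan} for (i), declares (ii) and (iii) easy, and refers to \cite[Proposition 3.7]{ste:isga} for (iv). Your bridging principle is the right tool: for $S\in\G$ and a compact open $E\subseteq G^{(0)}$ one has $SE=\{\gamma\in S:\ d(\gamma)\in E\}$ and $ES=\{\gamma\in S:\ r(\gamma)\in E\}$, and since compact open sets separate points of the Boolean space $X$, the bisection $S$ centralizes all idempotents if and only if $S$ is contained in the isotropy bundle $G'$. Combined with the fact that the compact open bisections form a base of the topology of $G$, this gives (i) and (ii), and your Lindel\"of-type compactness argument gives (iii).

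The genuine gap is in the converse of (iv). (A side remark first: your diagnosis of the forward direction is off --- for $S,T\in\G$ the intersection $S\cap T$ is \emph{always} open, being an intersection of two open sets; what fails in a non-Hausdorff groupoid is its \emph{compactness}. Your forward conclusion is nonetheless correct, since in a Hausdorff $G$ the compact sets $S$ and $T$ are closed, so $S\cap T$ is a closed, hence compact, subset of $S$.) The converse as you sketch it does not work: taking $S=T=G^{(0)}\cap V$ makes the hypothesis vacuous, because $S\cap T=S$ carries no information; worse, $G^{(0)}\cap V$ need not belong to $\G$ at all --- precisely when $G$ is non-Hausdorff this set is open but in general not compact, which is the very phenomenon exploited in the proof of \lemref{Q}(vi). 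Here is a correct argument. Let $\gamma\neq\eta$ in $G$. If $d(\gamma)\neq d(\eta)$, separate them by $d$-preimages of disjoint open subsets of the Hausdorff space $X$. If $d(\gamma)=d(\eta)$, choose $S,T\in\G$ with $\gamma\in S$ and $\eta\in T$; since $d$ is injective on each bisection and $\gamma\neq\eta$, we get $\gamma\notin T$ and $\eta\notin S$. By hypothesis $S\cap T$ is compact; since $T$ is Hausdorff (it is homeomorphic to $d(T)\subseteq X$ via $d_{|T}$), the compact set $S\cap T$ is closed in $T$, so $T\setminus(S\cap T)$ is open in $G$, contains $\eta$, and is disjoint from $S$. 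Thus $S$ and $T\setminus(S\cap T)$ are disjoint open neighborhoods of $\gamma$ and $\eta$, and $G$ is Hausdorff. (If you prefer your route via closedness of the unit space: given $\gamma\notin G^{(0)}$, pick $S\in\G$ containing $\gamma$ and a compact open $E\subseteq G^{(0)}$ containing $d(\gamma)$; then $SE\cap E\in\G$ is compact, hence closed in the Hausdorff set $SE$, so $SE\setminus E$ is an open neighborhood of $\gamma$ disjoint from $G^{(0)}$; one must then still invoke the standard fact that an \'etale groupoid with Hausdorff unit space is Hausdorff as soon as $G^{(0)}$ is closed in $G$.)
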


\begin{proof} For (i), see \cite[Corollary 3.3]{ren:cartan}. The assertions (ii) and (iii) are easy. The assertion (iii) is part of \cite[Proposition 3.7]{ste:isga}.
 
\end{proof}

\subsection*{Twisted non-commutative Stone equivalence} We show in this section that \thmref{equivalence} extends to the twisted categories. Since we have to consider topological groupoids which are not \'etale as the trivial group bundle $X\times\t$ where $X$ is a locally compact Hausdorff space and $\t$ is the group of complex numbers of modulus one, we first extend the notion of open bisection.

\begin{defn}\label{continuous bisection} A bisection $S$ of a topological groupoid $G$ is said to be {\it continuous} if the restrictions $r_{|S}$ and $d_{|S}$ are homeomorphisms onto open subsets of $G^{(0)}$. 
\end{defn}

\begin{prop} Let $G$ be an \'etale groupoid. Then a bisection $S$ of $G$ is continuous if and only if it is open.
 \end{prop}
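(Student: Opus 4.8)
The plan is to prove both implications from the single structural fact that, for an \'etale groupoid, the range and domain maps are local homeomorphisms and hence open maps. Recall that a bisection is a subset $S$ on which both $r_{|S}$ and $d_{|S}$ are injective, so on $S$ each of these maps is already a continuous bijection onto its image; the only content of the two definitions is whether these bijections are homeomorphisms onto open sets (continuity of $S$) as opposed to whether $S$ itself is open in $G$.

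For the direction ``open implies continuous'' I would argue as follows. Since $r$ is a local homeomorphism it is an open map, and its restriction $r_{|S}$ to the open set $S$ is again open as a map into $G^{(0)}$: any $V$ open in $S$ is open in $G$, so $r(V)$ is open. Thus $r_{|S}\colon S\to r(S)$ is a continuous open bijection, hence a homeomorphism, and its image $r(S)$ is open. The same argument applies to $d_{|S}$, so $S$ is continuous. This direction is entirely routine.

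The substantive direction is ``continuous implies open'', and the hard part will be to produce, around each point of $S$, an open neighborhood contained in $S$. The key observation I would isolate first is a lemma on sections: if $r$ is a local homeomorphism and $s_1,s_2$ are continuous sections of $r$ defined near a point $x_0\in G^{(0)}$ with $s_1(x_0)=s_2(x_0)$, then $s_1=s_2$ on a neighborhood of $x_0$. This holds because, choosing an open $W\ni s_1(x_0)$ on which $r_{|W}$ is a homeomorphism onto an open set, continuity forces $s_1$ and $s_2$ into $W$ near $x_0$, where injectivity of $r_{|W}$ together with $r\circ s_i=\mathrm{id}$ pins both down to the common value $(r_{|W})^{-1}$.

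Granting this, I would conclude quickly. Continuity of $S$ means $r_{|S}\colon S\to r(S)$ is a homeomorphism onto the open set $r(S)$, so its inverse $s\defequal (r_{|S})^{-1}$ is a continuous section of $r$ over $r(S)$ whose image is $S$. Fixing $\sigma\in S$ and setting $x_0=r(\sigma)$, I pick an open $W\ni\sigma$ on which $r_{|W}$ is a homeomorphism onto an open set $r(W)$. The two sections $s$ and $(r_{|W})^{-1}$ agree at $x_0$, hence agree on some open neighborhood $O\subseteq r(S)\cap r(W)$ of $x_0$ by the lemma. Then $s(O)=(r_{|W})^{-1}(O)$ is open in $G$ (the homeomorphic image of an open set), contains $\sigma$, and is contained in $S$. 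Thus every point of $S$ has an open neighborhood inside $S$, which proves that $S$ is open.
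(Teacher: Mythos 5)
Your proof is correct and follows the same route as the paper: the easy direction is handled identically (openness of the local homeomorphism $r$ makes $r_{|S}$ a continuous open bijection onto an open set), and for the converse you, like the paper, pass to the continuous section $s=(r_{|S})^{-1}$ of $r$ over the open set $r(S)$ and conclude that its image $S$ is open. The only difference is that at this point the paper simply cites Bourbaki (Corollaire 3, I.30) for the fact that the image of a continuous section of an \'etale map is open, whereas you prove that fact inline via a local-uniqueness lemma for sections of a local homeomorphism: two continuous sections agreeing at a point agree on a neighborhood, so $s$ coincides near $x_0$ with $(r_{|W})^{-1}$, whose image of a small open set is open in $G$. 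Both the lemma and its application are correct, so your version buys self-containedness at the cost of a paragraph; the mathematical content is the same.
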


\begin{proof} It suffices to consider the range map.
 If $S$ is open, $r(S)$ is open and $r_{|S}:S\to r(S)$ is injective, continuous and open, hence a homeomorphism.
 Let us assume conversely that $V=r(S)$ is open and that $r_{|S}:S\to V$ is a homeomorphism. Then the restriction of $r$ to $r^{-1}(V)$ is an \'etale map onto $V$ and the inverse $s$ of $r_{|S}$ is a continuous section. According to \cite[Corollary 3, I.30]{bbki:ta}, its image $S$ is open in $r^{-1}(V)$, hence in $G$.
\end{proof}
 
 \begin{prop} Let $G$ be a locally compact groupoid. Then
 
\begin{enumerate}
 \item The continuous bisections form a inverse subsemigroup of the inverse semigroup of bisections of $G$.
 \item The compact continuous bisections form a inverse subsemigroup of the inverse semigroup of bisections of $G$ which is Boolean if $G^{(0)}$ is Boolean.
\end{enumerate}
\end{prop}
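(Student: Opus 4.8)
The plan is to verify directly that the continuous bisections are closed under the two operations of the ambient inverse semigroup of all bisections of $G$, namely the set-theoretic product $ST=\{st:s\in S,\ t\in T,\ d(s)=r(t)\}$ and inversion $S\mapsto S^{-1}$. Closure under inversion is immediate: since the inversion map $\iota$ of $G$ is a homeomorphism interchanging $r$ and $d$, one has $r_{|S^{-1}}=d_{|S}\circ\iota_{|S^{-1}}$ and $d_{|S^{-1}}=r_{|S}\circ\iota_{|S^{-1}}$, each a composite of homeomorphisms onto the open sets $d(S)$ and $r(S)$. For the product, I would first check that $ST$ is a bisection by an elementary injectivity argument: if $r(st)=r(s't')$ then $r(s)=r(s')$ forces $s=s'$, whence $r(t)=r(t')$ forces $t=t'$, and symmetrically for $d$. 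The crucial point is then that $r_{|ST}$ is a homeomorphism onto the open set $r(ST)$. I would exhibit its inverse explicitly by $v\mapsto s\,t$ with $s=(r_{|S})^{-1}(v)$ and $t=(r_{|T})^{-1}(d(s))$; this is continuous as a composite of the continuous maps $(r_{|S})^{-1}$, $d$, $(r_{|T})^{-1}$ and the multiplication of $G$, and one checks that it lands in $ST$ and inverts $r_{|ST}$. The same construction handles $d_{|ST}$, so $ST$ is a continuous bisection. (Equivalently, one may observe that the induced partial homeomorphism of $G^{(0)}$ satisfies $\varphi_{ST}=\varphi_S\circ\varphi_T$.)

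For part (ii), closure under inversion is again clear, since $\iota$ is a homeomorphism and so $S^{-1}=\iota(S)$ is compact whenever $S$ is. The step requiring care is compactness of $ST$. Here I would use that the unit space $G^{(0)}$ of a locally compact groupoid is Hausdorff: the set of composable pairs $\{(s,t)\in S\times T:d(s)=r(t)\}$ is then the preimage of the diagonal of $G^{(0)}\times G^{(0)}$ under the continuous map $(s,t)\mapsto(d(s),r(t))$, hence closed in the compact space $S\times T$ and therefore compact; its image $ST$ under the continuous multiplication map is thus compact. Consequently the compact continuous bisections form an inverse subsemigroup.

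Finally I would identify the Boolean structure when $G^{(0)}$ is a Boolean space. The idempotents are precisely the compact continuous bisections contained in $G^{(0)}$, that is, the compact open subsets of $G^{(0)}$, which form a generalized Boolean algebra by classical Stone duality; this settles the requirement that the idempotents form a Boolean algebra, with the empty bisection as zero element. For the join condition, I would record that the order of this inverse semigroup is inclusion and that a pair $(S,T)$ is orthogonal precisely when $r(S)\cap r(T)=\emptyset$ and $d(S)\cap d(T)=\emptyset$, since the relations $S^{-1}T=0$ and $ST^{-1}=0$ translate into exactly these disjointness conditions. For such a pair the join is $S\cup T$: it is compact, the maps $r_{|S\cup T}$ and $d_{|S\cup T}$ are injective by disjointness of ranges and of domains, and each is a continuous bijection from a compact space onto the open set $r(S)\cup r(T)$, respectively $d(S)\cup d(T)$, inside the Hausdorff space $G^{(0)}$, hence a homeomorphism. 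Thus $S\cup T$ is a compact continuous bisection and is manifestly the least upper bound of $S$ and $T$, establishing \defnref{boolean isg}. The step I expect to be the main obstacle is the continuity of the product in part (i) --- producing a continuous local inverse of $r_{|ST}$ without the convenience of \'etaleness --- together with the compactness of $ST$ in part (ii), where the Hausdorffness of $G^{(0)}$ is essential; the Boolean verification is then largely bookkeeping resting on Stone duality for the idempotent semilattice.
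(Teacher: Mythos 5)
Your proof is correct, and its core coincides with the paper's: continuity of the product bisection is obtained by expressing the inverse of $r_{|ST}$ through the local inverses $(r_{|S})^{-1}$, $(r_{|T})^{-1}$ and the multiplication, and compactness of $ST$ comes from its being a continuous image of a compact set. The differences are of presentation and thoroughness. For (i), the paper factors $r_{|ST}$ as $ST\to Sr(T)\to r(ST)$, identifying the first arrow with a restriction of $r$ to an open subset of $T$ and the second with a restriction of $r$ to an open subset of $S$; your explicit inverse $v\mapsto st$, with $s=(r_{|S})^{-1}(v)$ and $t=(r_{|T})^{-1}(d(s))$, is the hands-on version of the same computation. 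One small point you should make explicit: the definition of continuous bisection also requires $r(ST)$ to be \emph{open} in $G^{(0)}$; this is immediate from your own construction, since $r(ST)$ is the preimage of the open set $r(T)$ under the continuous map $d\circ(r_{|S})^{-1}$ defined on the open set $r(S)$ (or from your parenthetical remark $\varphi_{ST}=\varphi_S\circ\varphi_T$). For (ii) you are in fact more careful than the paper, which loosely describes $ST$ as the image of ``the compact $S\times T$'' under the product map $G^{(2)}\to G$; your intersection with the set of composable pairs, closed in $S\times T$ because $G^{(0)}$ is Hausdorff, supplies the justification that step really needs. Finally, note that the paper's own proof of this proposition does not verify the Boolean structure at all: it implicitly relies on the same argument as the untwisted Proposition A.1 (idempotents are the compact open subsets of $G^{(0)}$, orthogonality means disjoint ranges and disjoint domains, and the join of an orthogonal pair is the union). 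Your proof spells this out, and what you write agrees in substance with that earlier argument, so nothing is missing on your side.
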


\begin{proof}
 For (i), let $S,T$ be continuous bisections. Then $r_{|ST}$ can be decomposed as
 $$ST\to Sr(T)\to r(ST)=r(Sr(T))$$
 The first map is isomorphic to $d(S)T\to d(S)\cap r(T)$, which is the restriction of $r$ to an open subset of $T$ and the second is the restriction of $r$ to an open subset of $S$. Moreover, if $S$ is a continuous bisection, so is obviously $S^{-1}$.
 
 For (ii), it suffices to say that $ST$ is the image of the compact $S\times T$ by the product map $G^{(2)}\to G$ which is continuous and that $S^{-1}$ is the image of $S$ by the inverse map which is also continuous.
\end{proof}

\begin{defn}\label{Bisbis} The {\it ample inverse semigroup} of a locally compact groupoid $G$ such that $G^{(0)}$ is Boolean is defined as the inverse semigroup of its compact continuous bisections. It is denoted by ${\rm Bis}(G)$.
 \end{defn}
 
 \begin{defn}\label{trivial Clifford} The {\it trivial Clifford inverse semigroup} over the Boolean space $X$, denoted by ${\mathcal T}(X)$, is the ample inverse semigroup ${\rm Bis}(X\times\t)$ of the trivial group bundle $X\times\t$.
\end{defn}

We define now the category of twisted Boolean groupoids and the category of twisted Boolean inverse semigroups.

\begin{defn}\label{twisted category} The objects of the {\it category of twisted Boolean groupoids} are the twisted Boolean groupoids $(G,\Sigma)$ as defined in \defnref{gpd twist}. An arrow from $(H,\Lambda)$ to $(G,\Sigma)$ is a $\t$-equivariant multiplier action with proper anchor of $\Lambda$ on $\Sigma$.
 \end{defn}
By definition, an action of $\Lambda$ on $\Sigma$ is $\t$-equivariant if $(\theta l)\sigma=l(\theta\sigma)$ for all $\theta\in\t$ and all $(l,\sigma)\in\Lambda*\Sigma$. Such an action induces a multiplier action of $H$ on $G$. Then $C_c(H,\Lambda)$ acts on $C_c(G,\Sigma)$ by multipliers (these convolution algebras have been defined in Section 1.3). The formula is as above:
\[f*_\beta g(\sigma)=\int f(l)g(l^{-1}\sigma)d\beta^{\rho(\sigma)}(l')\]
for $f\in C_c(H,\Lambda)$ and $g\in C_c(G,\Sigma)$ and where $l'$ is the image of $l$ in $H$. Since we assume that the moment map $\rho^{(0)}: G^{(0)}\to H^{(0)}$ is proper, we have a $*$-homomorphism of $C_c(H,\Lambda)$ into $C_c(G,\Sigma)$ given by
\[j(f)(\sigma)=\sum_{l\in \Lambda:l s(\sigma)=\sigma} f(l)=f*_\beta\Delta_{G^{(0)}}(\sigma)\] 
(where $\Delta_S$ is defined in Section 1.3 and before \defnref{refine}) such that $f*_\beta g=j(f)*_\lambda g$.

\begin{defn} The objects of the {\it category of twisted Boolean inverse semigroups} are the twisted Boolean inverse semigroups $(\G,{\mathcal S})$ as defined in \defnref{isg twist}. An arrow from $({\mathcal H},{\mathcal S}_{\mathcal H})$ to $(\G,{\mathcal S})$ is a pair of Boolean inverse semigroup morphisms $j:{\mathcal H}\to {\mathcal G}$ and $j_1:{\mathcal S}_{\mathcal H}\to {\mathcal S}$ making the following diagram commutative
\[\begin{CD}
{\mathcal T}_{{\mathcal H}^{(0)}}@>{i_{\mathcal H}}>{}>{\mathcal S}_{\mathcal H}@>{p_{\mathcal H}}>{}>{\mathcal H}\\
 @V{}V{j_1^{(0)}}V  @V{}V{j_1}V@V{}V{j}V\\
{\mathcal T}@>{ i}>{}>{\mathcal S}@>{ p}>{}>\G
 \end{CD}
\]
where $j_1^{(0)}:{\mathcal T}_{{\mathcal H}^{(0)}}\to{\mathcal T}$ is the natural extension of $j^{(0)}:{\mathcal H}^{(0)}\to \G^{(0)}$. Equivalently, it is a Boolean inverse semigroup morphism $j_1:{\mathcal S}_{\mathcal H}\to {\mathcal S}$ whose restriction to ${\mathcal T}_{{\mathcal H}^{(0)}}$ is the natural extension of $j^{(0)}:{\mathcal H}^{(0)}\to \G^{(0)}$.

\end{defn}
\begin{prop}\label{gpdtwist to isgtwist} Let  $(G,\Sigma)$ be a twisted Boolean groupoid. Then $({\rm Bis}(G), {\rm Bis}(\Sigma))$ is a twisted Boolean inverse semigroup.
\end{prop}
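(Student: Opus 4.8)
The plan is to realize $({\rm Bis}(G),{\rm Bis}(\Sigma))$ as an inverse semigroup extension of the form demanded by \defnref{isg twist}. Write $X=G^{(0)}=\Sigma^{(0)}$, which is Boolean. By the ample inverse semigroup construction, ${\rm Bis}(G)$ is a Boolean inverse semigroup, and by the proposition on compact continuous bisections (applied to $\Sigma$, whose unit space $X$ is Boolean) so is ${\rm Bis}(\Sigma)$; both have $\B(X)$ as their common semilattice of idempotents, since the idempotents are the compact open subsets of $X$. The trivial Clifford inverse semigroup ${\mathcal T}(X)={\rm Bis}(X\times\t)$ of \defnref{trivial Clifford} is exactly the kernel object required, for the dual space of $\B(X)$ is $X$. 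I would then produce the two maps
\[{\mathcal T}(X)\stackrel{i}{\rightarrowtail}{\rm Bis}(\Sigma)\stackrel{p_*}{\twoheadrightarrow}{\rm Bis}(G),\]
where $i$ is induced by the inclusion of the trivial group bundle $X\times\t$ as a locally closed subgroupoid of $\Sigma$, and $p_*(S)=p(S)$.

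Next I would check that these maps are well-defined morphisms. For $i$: a compact continuous bisection of $X\times\t$ has range and domain equal to a compact open $U\subset X$ and, viewed in $\Sigma$, remains a compact continuous bisection because the subspace inclusion is a continuous groupoid embedding over $X$; that $i$ is an injective inverse semigroup morphism is then immediate. For $p_*$: since $p$ is a groupoid homomorphism restricting to the identity on $X$, one has $r(p(S))=r(S)$ and $d(p(S))=d(S)$, and $p$ restricted to a bisection $S$ is injective (two points of $S$ in the same $\t$-orbit have equal range, hence coincide); as $p$ is continuous and open and $G$ is \'etale, $p(S)$ is a compact open bisection of $G$, and $p_*$ is an inverse semigroup morphism because $p$ preserves products and inverses.

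The surjectivity of $p_*$ is the crux and where the Boolean hypothesis is used. Given a compact open bisection $T$ of $G$, I must lift it to a compact continuous bisection $S$ of $\Sigma$ with $p(S)=T$, i.e. find a continuous section of $p$ over $T$. Since $p:\Sigma\to G$ is a principal $\t$-bundle (a continuous, open, free and proper circle action, hence locally trivial by Gleason's theorem) and $T$ is homeomorphic to a compact open subset of the Boolean space $X$, hence zero-dimensional, the restricted bundle $p^{-1}(T)\to T$ is trivial and admits a continuous section; its image is the desired $S$. Equivalently, and staying inside the Boolean formalism, I would cover $T$ by finitely many compact open bisections over each of which a local section exists, refine to a finite partition $(T_k)$, lift each $T_k$ to an $S_k\in{\rm Bis}(\Sigma)$, and take the join $S=\bigvee_k S_k$, which exists because the $S_k$ are orthogonal and ${\rm Bis}(\Sigma)$ is Boolean.

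Finally I would verify the two exactness-type conditions of \defnref{isg twist}. First, $p(S)\subset X$ if and only if $S\subset p^{-1}(X)=X\times\t$, so the elements of ${\rm Bis}(\Sigma)$ sent into the idempotents are exactly $i({\mathcal T}(X))$; more precisely $p(S)=p(S')$ forces $S$ and $S'$ to differ by a continuous $\t$-valued function, so $i$ identifies ${\mathcal T}(X)$ with the kernel and ${\rm Bis}(G)\cong{\rm Bis}(\Sigma)/i({\mathcal T}(X))$. Second, for the conjugation action: given $S\in{\rm Bis}(\Sigma)$ inducing the partial homeomorphism $\varphi=\varphi_{p(S)}$ of $X$, centrality of the extension gives $\sigma\,i(d(\sigma),\theta)=i(r(\sigma),\theta)\,\sigma$, so conjugating $i(h)$ by $S$ transports the function $h$ along $\varphi$ with the phase $\theta$ unchanged; this is precisely the trivial extension of $\varphi$ to ${\mathcal T}(X)$ described before \defnref{isg twist}. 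The only genuine difficulty is the lifting step, which is exactly the local triviality of the twist over the totally disconnected unit space; everything else is a routine transcription of the groupoid structure into the bisection calculus.
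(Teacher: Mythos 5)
Your proof is correct and follows essentially the same route as the paper's: define the induced map $p:{\rm Bis}(\Sigma)\to{\rm Bis}(G)$, prove surjectivity by patching continuous local sections of $p:\Sigma\to G$ over a finite clopen partition of a compact open bisection (your join-of-orthogonal-lifts argument is exactly the paper's patching step), and identify the kernel with the trivial Clifford inverse semigroup ${\mathcal T}(G^{(0)})$. Your explicit verification of the trivial-extension condition for the conjugation action, and the alternative surjectivity argument via triviality of principal $\t$-bundles over zero-dimensional compact spaces, are additions the paper leaves implicit, but they do not change the approach.
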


\begin{proof} Let $p:\Sigma\to G$ be the quotient map. If $S$ is a continuous bisection of $\Sigma$, then $p(S)$ is a continuous bisection of $G$. Since $G$ is \'etale, $p(S)$ is an open bisection of $G$. If $S$ is compact, so is $p(S)$. This defines a map from ${\rm Bis}(\Sigma))$ to ${\rm Bis}(G)$ which we denote also by $p$. The relation $p(ST)=p(S)p(T)$ and $p(S^{-1})=p(S)^{-1}$ are easily checked. Let us show that $p: {\rm Bis}(\Sigma))\to{\rm Bis}(G)$ is surjective. Let $S$ be a compact open bisection of $G$. Since  $p:\Sigma\to G$ admits continuous local sections (see \cite{ren:twisted extensions}, paragraph following Definition 1.1), there is an open cover $(U_i)$ of $S$ and continuous sections $\sigma_i$ defined on $U_i$. Since $S$ is a compact Boolean space, we can assume that the $U_i$'s are clopen and that the cover $(U_i)$ is a finite partition;  we define a continuous section $\sigma$ on $S$ by $\sigma_{|U_i}=\sigma_i$. Then $\sigma(S)$ is a compact continuous bisection of $\Sigma$ such that $p(\sigma(S))=S$. Let us show that the kernel of the map $p: {\rm Bis}(\Sigma)\to{\rm Bis}(G)$ is the trivial Clifford inverse semigroup ${\mathcal T}(G^{(0)})$. Let $S$ be a compact continuous bisection of $\Sigma$ such that $p(S)\subset G^{(0)}$. Then $S$ is a continuous (bi)section of $G^{(0)}\times\t$. In other words, $S$ belongs to ${\mathcal T}(G^{(0)})$. It is clear that if $S$ is a continuous (bi)section of $G^{(0)}\times\t$, then $p(S)\subset G^{(0)}$.
\end{proof}

The following proposition is essentially \cite[Theorem 3.22]{be:fell}, where it is expressed in terms of Fell line bundles rather than twists. The direct construction of the twisted Boolean groupoid is outlined after \cite[Definition 3.23]{be:fell}.
We recall that we define on ${\mathcal S}*X=\{(s,x): s\in {\mathcal S}, x\in{\rm dom}(s)\}$ the equivalence relations
 $(s,x)\sim (t,y)$ if and only if
  $$x=y\quad{\rm and}\quad\exists\, a,b\in {\mathcal T}(X): a(x)=b(x)=1\quad{\rm and}\quad sa=tb$$
 and
 $(s,x)\approx (t,y)$ if and only if
  $$x=y\quad{\rm and}\quad\exists\, a,b\in {\mathcal T}(X): |a(x)|=|b(x)|=1\quad{\rm and}\quad sa=tb.$$
  The equivalence class of $(s,x)$ for $\sim$ [resp. for $\approx$] is denoted by $[s,x]$ [resp. by $[[s,x]]$].

\begin{prop}\label{isgtwist to gpdtwist} Let $(\G,{\mathcal S})$ be a twisted Boolean inverse semigroup. Then,
\begin{enumerate}
 \item $\Sigma={\mathcal S}*X/\sim$ has a groupoid structure with
$$[s,\varphi_t(x)][t,x]=[st,x],\quad [s,x]^{-1}=[s^*,\varphi_s(x)].$$
 \item $G={\mathcal S}*X/\approx$ has a groupoid structure with
$$[[s,\varphi_t(x)]][[t,x]]=[[st,x]],\quad [[s,x]]^{-1}=[[s^*,\varphi_s(x)]].$$
\item the map $p:\Sigma\to G$ sending $[s,x]$ to $[[s,x]]$
is a surjective groupoid homomorphism such that $p^{(0)}$ is the identity of $X$ and ${\rm Ker}(p)$ is isomorphic to $X\times\t$.
\item $(G,\Sigma)$ is a twisted Boolean groupoid.
\item $(\G,{\mathcal S})$ is isomorphic to $({\rm Bis}(G), {\rm Bis}(\Sigma))$.
\end{enumerate}
 \end{prop}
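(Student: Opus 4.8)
The plan is to recognise this statement as the twisted analogue of the groupoid-of-germs construction (\defnref{germ}) and to lean on the untwisted equivalence of \thmref{equivalence} wherever possible. The cleanest entry point is part (ii): I claim that $G={\mathcal S}*X/\approx$ is nothing but the ordinary groupoid of germs $\mathrm{Germ}(\G)$ of $\G$ acting on $X$. Indeed, $[[s,x]]\mapsto[p(s),x]$ is well defined and bijective, because $sa=tb$ with $|a(x)|=|b(x)|=1$ projects under $p$ to $p(s)e=p(t)f$ for idempotents $e=p(a),f=p(b)$ of $\G^{(0)}$ whose supports contain $x$, which is exactly the germ equivalence in $\G$; conversely any such germ relation lifts, using that $p^{-1}(\G^{(0)})={\mathcal T}$. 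This identification at once supplies the groupoid structure of (ii) and shows that $G$ is a Boolean groupoid, settling the $G$-part of (iv).

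For part (i) I would verify directly that the formulas define a groupoid on $\Sigma={\mathcal S}*X/\sim$. The one genuine computation is well-definedness of the product and the inverse, and both rest on the defining property that the conjugation action of ${\mathcal S}$ on ${\mathcal T}$ is the trivial extension of its action on $\G^{(0)}$, i.e. $uc=(c\circ\varphi_u^{-1})\,u$ for $c\in{\mathcal T}$, so a phase function may be moved across an element of ${\mathcal S}$ with its values transported but unchanged. Since this transport preserves the value at the relevant point, the normalisations $a(x)=b(x)=1$ survive, and one checks that $(s,\varphi_t(x))\sim(s',\varphi_{t'}(x'))$ together with $(t,x)\sim(t',x')$ gives $(st,x)\sim(s't',x')$, and similarly for the inverse. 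The groupoid axioms (associativity, units $[e,x]$ identified with $x$, inverses) are then inherited from the inverse semigroup ${\mathcal S}$. Part (iii) follows immediately: $\sim$ refines $\approx$, so $p$ is a well-defined surjective homomorphism with $p^{(0)}=\mathrm{id}_X$; and $(x,\theta)\mapsto[t_\theta,x]$, where $t_\theta\in{\mathcal T}$ is any phase function with $t_\theta(x)=\theta$, is a groupoid isomorphism of $X\times\t$ onto $\mathrm{Ker}(p)$ --- well defined and injective because in ${\mathcal T}$ the class $[t,x]$ depends only on $t(x)$, and surjective because $[[s,x]]\in G^{(0)}$ forces $sa\in{\mathcal T}$ and hence $[s,x]=[sa,x]=[t_{(sa)(x)},x]$.

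It remains to topologise $\Sigma$ and to prove (iv) and (v). I would declare a basis given by the sets $\{[s,x]\theta:\ x\in U\cap\mathrm{dom}(s),\ \theta\in W\}$ for $s\in{\mathcal S}$, $U$ open in $X$ and $W$ open in $\t$, where $[s,x]\theta\defequal[s\,t_\theta,x]$, and check that this makes $\Sigma$ a locally compact groupoid in which $r$ and $d$ are continuous with $\t$-bundle fibres, that the central $\t$-action is continuous, and that $p$ is continuous and open with $p^{-1}(G^{(0)})=X\times\t$ locally closed; together with the Boolean-ness of $G$ this yields (iv). For (v), the untwisted \thmref{equivalence} already gives $\G\cong\mathrm{Bis}(G)$ via $p(s)\mapsto S'(s)$, and I would run the same argument one level up, sending $s\mapsto S(s)=\{[s,x]:x\in\mathrm{dom}(s)\}$, which is a compact continuous bisection because homeomorphic to $\mathrm{dom}(s)$. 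The relation $S(s)S(t)=S(st)$ makes this a morphism, injectivity follows from the Boolean-join argument of \thmref{equivalence} (a phase discrepancy at a point already forces distinct $\sim$-classes), and the restriction to ${\mathcal T}$ is the identity of ${\mathcal T}(X)=\mathrm{Bis}(X\times\t)$, so the two maps together form an isomorphism of twisted Boolean inverse semigroups.

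I expect the principal difficulty to lie in two places. First, surjectivity of $s\mapsto S(s)$ in (v): given an arbitrary compact continuous bisection $S$ of $\Sigma$ one must cover it by basic pieces $S(s_i)$, arrange the $s_i$ to be compatible (orthogonal) using that ${\mathcal S}^{(0)}=\B(X)$ is Boolean, and assemble $s=\bigvee_i s_i$ in ${\mathcal S}$ --- this is where the join structure of ${\mathcal S}$ is essential, exactly as in the final paragraph of the proof of \thmref{equivalence}. Second, the verification that the proposed basis really defines a locally compact (and non-Hausdorff if $G$ is) groupoid topology for which $X\times\t\rightarrowtail\Sigma\twoheadrightarrow G$ is a central extension with $p$ open: this is routine but bookkeeping-heavy, and is the analytic content distinguishing the twisted case from the étale untwisted one. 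By contrast, the algebraic well-definedness computations of (i) are tedious but present no real obstacle.
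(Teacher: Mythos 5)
Your proposal is correct and, for most of its length, runs along the same lines as the paper's proof: the identification of $G$ with the untwisted groupoid of germs (the paper states exactly this: ``Note also that $G$ is the groupoid of germs of $\G$''), the direct algebraic verification of (i) using the centrality relation $sc=(c\circ\varphi_s^{-1})s$ for $c\in{\mathcal T}$, the description of ${\rm Ker}(p)$ via phase functions $c\in{\mathcal T}(X)$ whose class $[c,x]$ depends only on $(x,c(x))$, the basis $\{S(s)W\}$ for the topology of $\Sigma$, and the map $s\mapsto S(s)$ in (v). The one genuine divergence is the surjectivity of $s\mapsto S(s)$ onto ${\rm Bis}(\Sigma)$. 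You cover a compact continuous bisection $S$ of $\Sigma$ by pieces $S(s_i)$, orthogonalize them using that $\B(X)$ is Boolean, and assemble $\bigvee_i s_i$ in ${\mathcal S}$ --- the same patching that closes the proof of \thmref{equivalence}. The paper avoids patching altogether: it projects $S$ to $G$, uses \'etaleness of $G$ (and the untwisted statement) to produce a \emph{single} $s\in{\mathcal S}$ with $p(S)=S(p(s))$, and then corrects by one global phase, namely $h([s,x])=[s,x]c(x)$ with $c(x)=[s,x]^{-1}h([s,x])$ continuous by freeness of the $\t$-action, so that $S=S(sc)$. Your route is uniform with the untwisted argument; the paper's is shorter and, more importantly, it isolates the analytic fact your sketch quietly presupposes: for your covering step to even begin, you must know that $S$ agrees near each of its points with some $S(s')$, $s'\in{\mathcal S}$, and that is exactly the local form of the paper's continuity-of-$c$ argument (two continuous sections of the free $\t$-bundle $\Sigma\to G$ over a common open bisection differ by a continuous $\t$-valued function, which on a compact open set is an element of ${\mathcal T}(X)$). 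So your ``basic pieces'' are really phase-corrected pieces $S(s_ic_ie_i)$, and this correction is a point to be proved, not bookkeeping.

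One small slip in your (iii): from $sa\in{\mathcal T}$ with $|a(x)|=1$ you conclude $[s,x]=[sa,x]$; this holds only when $a(x)=1$. The correct statement is $[s,x]=[(sa)a^*,x]$, i.e. $[s,x]=[t_\theta,x]$ with $\theta=(sa)(x)\,\overline{a(x)}$ (this is the paper's $c=ba^*$ with $b=sa$). The conclusion that ${\rm Ker}(p)$ coincides with the image of $X\times\t$ is unaffected.
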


\begin{proof}
The proofs of (i) and (ii) are similar to that of \cite[Section 4]{exe:isg}. The unit spaces of $G$ and $\Sigma$ are the dual space $X$ of the Boolean algebra $\G^{(0)}$. The domain and range maps are respectively $d([s,x])=x$, $r([s,x])=\varphi_s(x)$. Note also that $G$ is the groupoid of germs of $\G$. For (iii), the surjectivity of $p$ is clear, as well as the fact that it is a groupoid homomorphism. The class in $\Sigma$ of $(c,x)$, where $c\in{\mathcal T}(X)$ and $x\in{\rm dom}(c)$ depends only on $(x,c(x))$; moreover, every $(x,\theta)\in X\times\t$ can be realized as $(x,c(x))$ for some $c\in{\mathcal T}(X)$. This defines an injective homomorphism $j:X\times\t\to\Sigma$ and, since $(c,x)\approx (|c|,x)$, $p\circ j(x,\theta)$ is a unit. Let $(s,x)\in{\mathcal S}*X$ such that $[[s,x]]$ is a unit. There exist $a,b\in {\mathcal T}(X)$ such that $|a(x)|=|b(x)|=1$ and $sa=b$. Let $c=ba^*\in {\mathcal T}(X)$. Then $(s,x)\sim (c,x)$. For (iv), it results from the previous assertions that $(G,\Sigma)$ is a twist. In particular, $\Sigma$ is endowed with a free action of $\t$, namely $[s,x]\theta=[sc,x]$ where $c\in{\mathcal T}(X)$ satisfies $c(x)=\theta$ and $p:\Sigma\to G$ is the quotient map. We endow $G$ with the topology of germs. In order to define the topology of $\Sigma$, we have to take into account the topology of $\t$. Given $s\in{\mathcal S}$, we define as earlier
 \[S(s)=\{[s, x]: x\in{\rm dom}(s)\}.\] Let us show that the family $\{S(s)W\}$, where $s\in{\mathcal S}$ and $W$ is an open subset of $\t$ is a base of topology on $\Sigma$. Suppose indeed that $[t,x]$ belongs to $S(s_1)W_1\cap S(s_2)W_2$ where $s_1,s_2\in {\mathcal S}$ and $W_1,W_2$ are open subsets of $\t$. Using the relation $S(s_i)\theta W_i=S(s_ic_\theta)W_i$, where $\theta\in\t$ and $c_\theta$ is the constant function $\theta$, we may assume that $W_1$ and $W_2$ contain $1$ and that $[t,x]=[s_1,x]=[s_2,x]$. Then, we can find $s\in{\mathcal S}$ and $c\in{\mathcal T}(X)$ such that $s\le s_1$, $sc\le s_2$ and $[t,x]$ belongs to $S(s)W_1\cap S(sc)W_2$. Let $W_3$ be an open neighborhood of $1$ such that $\overline{W_3}W_3\subset W_2$. By continuity of $c$, there exists a compact open neighborhood $V$ of $x$ such that $c(V)\subset W_3$. Then, for $y\in V$ and $\theta\in W_3$, $[s,y]\theta=[sc,y]\overline{c(y)}\theta$ belongs to $S(sc)W_2$. Denoting by $e$ the characteristic function of $V$, we have
 $$[t,x]\in S(se)(W_1\cap W_3)\subset S(s)W_1\cap U(tc)W_2$$
 as desired. The proof that this topology is locally compact and turns $\Sigma$ into a topological groupoid is left to the reader. We observe that $p:\Sigma\to G$ is open and that $j(X\times\t)=p^{-1}(G^{(0)})$ is an open subset of $\Sigma$. Let us check that the continuous bisections of $\Sigma$ are exactly the subsets $S(s)$, where $s\in{\mathcal S}$. Since the restriction of $p:\Sigma\to G$ to $S(s)$ is a homeomorphism of $S(s)$ onto $S(p(s))$, $S(s)$ is a continuous bisection of $\Sigma$. Conversely, let $S$ be a continous bisection of $\Sigma$. Then $p(S)$ is a continuous bisection of $G$. Since $G$ is \'etale, $p(S)$ is an open bisection of $G$. Therefore, there exists $s\in{\mathcal S}$ such that $p(S)=S(p(s))$. Then $S$ and $S(s)$ have the same homeomorphic image in $\G$. By composition, this gives a homeomorphism $h:S(s)\to S$ which is compatible with the domain and range maps. Since $G=\Sigma/\t$ and the action of $\t$ is free, for all $x\in d(s)$ there exist $c(x)\in\t$ such that $h([s,x])=[s,x]c(x)$. Since $c(x)=[s,x]^{-1}h([s,x])$, $c$ is continuous. We can write $h([s,x])=[sc,x]$ and $S=S(sc)$. The map $s\mapsto S(s)$ is a semigroup isomorphism of $\mathcal S$ onto the ample semigroup of $\Sigma$ and its restriction to ${\mathcal T}(X)$ is the identity. Therefore, it is an isomorphism of the twisted Boolean inverse semigroup $(\G,{\mathcal S})$ onto $({\rm Bis}(G), {\rm Bis}(\Sigma))$.
\end{proof}
With the notation of the proposition, we say that $(G,\Sigma)$ is the {\it twisted groupoid of germs} of $(\G,{\mathcal S})$.

\begin{thm}\label{twisted equivalence} The constructions of the groupoid of germs of a Boolean inverse semigroup and of the ample semigroup of a Boolean groupoid are functorial and give an equivalence of the category of twisted Boolean inverse semigroups and the category of twisted Boolean groupoids.
\end{thm}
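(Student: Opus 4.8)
The plan is to bootstrap from the untwisted equivalence \thmref{equivalence} together with the object-level correspondences already established in \propref{gpdtwist to isgtwist} and \propref{isgtwist to gpdtwist}. On objects these give, for every twisted Boolean groupoid $(G,\Sigma)$, a twisted Boolean inverse semigroup ${\rm Bis}(G,\Sigma)\defequal({\rm Bis}(G),{\rm Bis}(\Sigma))$, and for every twisted Boolean inverse semigroup $(\G,{\mathcal S})$ its twisted groupoid of germs ${\rm Germ}(\G,{\mathcal S})\defequal(G,\Sigma)$; moreover \propref{isgtwist to gpdtwist}(v) yields $(\G,{\mathcal S})\cong{\rm Bis}({\rm Germ}(\G,{\mathcal S}))$, and applying \thmref{equivalence} to the underlying untwisted data while matching the $\t$-kernels of the two extensions yields $(G,\Sigma)\cong{\rm Germ}({\rm Bis}(G,\Sigma))$. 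Thus only the action of the two functors on arrows, their functoriality, and the naturality of these object isomorphisms remain to be checked.

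First I would extend ${\rm Bis}$ to arrows. A $\t$-equivariant multiplier action with proper anchor of $\Lambda$ on $\Sigma$ induces, by forgetting $\t$, a multiplier action of $H$ on $G$ with proper anchor, hence by \thmref{equivalence} a Boolean inverse semigroup morphism $j:{\rm Bis}(H)\to{\rm Bis}(G)$. At the level of the twists I would set $j_1(S)\defequal S\Sigma^{(0)}$ for $S\in{\rm Bis}(\Lambda)$, exactly as in the untwisted proof of \thmref{equivalence}; the computations $j_1(ST)=j_1(S)j_1(T)$ and $j_1(S^{-1})=j_1(S)^{-1}$ are identical, and the $\t$-equivariance of the action guarantees that $j_1$ restricts on the kernel ${\mathcal T}(H^{(0)})={\rm Bis}(H^{(0)}\times\t)$ to the natural extension of $(\rho^{(0)})^{-1}$, so that the defining square for an arrow of twisted inverse semigroups commutes and $(j,j_1)$ is a morphism.

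Next I would extend ${\rm Germ}$ to arrows. Given an arrow $(j,j_1)$, \thmref{equivalence} turns $j:{\mathcal H}\to\G$ into a multiplier action of $H={\rm Germ}({\mathcal H})$ on $G={\rm Germ}(\G)$ with proper anchor $\rho^{(0)}$ coming from $j^{(0)}$. I would lift it to an action of $\Lambda$ on $\Sigma$ by the germ formula
\[[s,\rho^{(0)}\circ\varphi_t(x)]\,[t,x]\defequal[j_1(s)t,x],\]
for $s\in{\mathcal S}_{\mathcal H}$, $t\in{\mathcal S}$ and $x\in{\rm dom}(t)$, copying the untwisted construction but with $j_1$ in place of $j$. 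The verifications parallel those in \thmref{equivalence}: well-definedness on $\sim$-classes (using that $j_1$ intertwines the ${\mathcal T}(X)$-actions), the groupoid-action axioms, and continuity from the fact that the action map is continuous on the basic bisections $S(s)$; the compatibility of $j_1$ with the trivial Clifford parts yields the $\t$-equivariance $(\theta l)\sigma=l(\theta\sigma)$, and properness of the anchor is inherited from $\rho^{(0)}$.

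Finally I would check that ${\rm Bis}$ and ${\rm Germ}$ preserve identities and composition of arrows --- a routine computation, as already acknowledged for the untwisted case in \thmref{equivalence} --- and that the object isomorphisms above are natural, which follows because each is assembled from the corresponding untwisted natural isomorphism of \thmref{equivalence} together with the identification of the $\t$-kernels. The main obstacle I anticipate is not any single hard step but the consistent bookkeeping of the $\t$-structure across both constructions: one must ensure that the morphism $j_1$ and the lifted action respect the topology of $\Sigma$, which, unlike the germ topology on $G$, genuinely involves the topology of $\t$ as in \propref{isgtwist to gpdtwist}, and that every identification descends correctly modulo the free $\t$-action. Once $\t$-equivariance is threaded through each step, the twisted equivalence reduces cleanly to \thmref{equivalence}.
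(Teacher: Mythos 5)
Your treatment of the two functors on arrows coincides with the paper's: the same formula $j_1(S)=S\Sigma^{(0)}$ defines the twist-level morphism for ${\rm Bis}$, the same germ formula $[s,\rho^{(0)}\circ\varphi_t(x)][t,x]=[j_1(s)t,x]$ defines the lifted multiplier action for ${\rm Germ}$, and the counit isomorphism $(\G,{\mathcal S})\cong{\rm Bis}({\rm Germ}(\G,{\mathcal S}))$ is indeed \propref{isgtwist to gpdtwist}(v). The gap is in the unit isomorphism $(G,\Sigma)\cong{\rm Germ}({\rm Bis}(G,\Sigma))$, which you propose to obtain by ``applying \thmref{equivalence} to the underlying untwisted data while matching the $\t$-kernels of the two extensions.'' This cannot work as stated, for two reasons. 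First, \thmref{equivalence} is a statement about Boolean groupoids, i.e.\ \emph{\'etale} ones; $\Sigma$ is not \'etale (its $d$-fibres are $\t$-torsors over those of $G$), so there is no untwisted datum attached to $\Sigma$ to which that theorem applies --- it only identifies $G$ with ${\rm Germ}({\rm Bis}(G))$. Second, a central extension is not determined up to isomorphism by its quotient and its kernel (twists over a fixed $G$ with kernel $G^{(0)}\times\t$ form a nontrivial group), so ``matching the $\t$-kernels'' produces no isomorphism between the two extensions; one must exhibit a concrete map and prove it is an isomorphism of \emph{topological} groupoids.

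This is exactly where the paper does its real work. It defines $f:[S,x]\mapsto Sx$ on the twisted groupoid of germs $\underline{\Sigma}$ of $({\rm Bis}(G),{\rm Bis}(\Sigma))$, observes that it is a $\t$-equivariant groupoid isomorphism, and then proves it is a homeomorphism by showing that the sets $SW$, with $S\in{\rm Bis}(\Sigma)$ and $W\subset\t$ open, form a base of the topology of $\Sigma$. That argument is not formal: it uses local triviality of the principal $\t$-bundle $p:\Sigma\to G$ (existence of continuous local sections, already exploited in \propref{gpdtwist to isgtwist} to get surjectivity of ${\rm Bis}(\Sigma)\to{\rm Bis}(G)$), the homeomorphism $(x,\theta)\mapsto (xS)\theta$ of $r(S)\times\t$ onto the open set $S\t$, and compactness of bisections to shrink a given neighborhood $U$ of $\sigma$ to one of the form $(VS)W$. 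You do flag at the end that the topology of $\Sigma$ ``genuinely involves the topology of $\t$,'' but you file it under bookkeeping for the arrow constructions; in fact it is the missing substantive step, and without it the composite ${\rm Germ}\circ{\rm Bis}$ is not shown to be naturally isomorphic to the identity on twisted Boolean groupoids, so no equivalence of categories is obtained.
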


\begin{proof}  \vskip2mm
 Let $(H,\Lambda)$ and $(G,\Sigma)$ be twisted Boolean groupoids and suppose that there is a $\t$-equivariant multiplier action of $\Lambda$ on $\Sigma$ with proper anchor $\rho^{(0)}$. Passing to the quotient, it gives a multiplier action of $H$ on $G$ with the same anchor. As in \thmref{equivalence}, we have a Boolean inverse semigroup homomorphism $j:{\rm Bis}(H)\to {\rm Bis}(G)$. The same formula
\[j_1(S)\defequal S\Sigma^{(0)}=\{l (x,1): l\in S, x\in G^{(0)}, \rho^{(0)}(x)=d(l)\},\]
where we identify $(x,1)\in G^{(0)}\times\t$ and its image in $\Sigma$,
defines a Boolean inverse semigroup homomorphism $j_1:{\rm Bis}(\Lambda)\to {\rm Bis}(\Sigma)$. It is readily checked that the pair $(j,j_1)$ is an arrow from $({\rm Bis}(H), {\rm Bis}(\Lambda))$ to $({\rm Bis}(G), {\rm Bis}(\Sigma))$. 
\vskip2mm
Let $(j,j_1)$ be an arrow form $({\mathcal H},{\mathcal S}_{\mathcal H})$ to $({\mathcal G},{\mathcal S})$. We let $(H,\Lambda)$ and $G,\Sigma)$ be the corresponding twisted groupoids of germs. Then, $j_1$ defines a multiplier action of $\Lambda$ on $\Sigma$ through the formula
\[[s,\rho_0\circ r(\varphi_t(x))][t,x]=[j_1(s)t,x]\]
where $s\in{\mathcal S}_{\mathcal H}$, $t\in{\mathcal S}$, $x\in{\rm dom}(t)$ and the anchor $\rho_0$ is the dual map of $j^{(0)}:{\mathcal H}^{(0)}\to\G^{(0)}$. It can be checked that its restriction to ${\mathcal T}_{{\mathcal H}^{(0)}}$ is the natural extension of $j^{(0)}$. The quotient multiplier action of $H$ on $G$ is the multiplier action defined by $j$. Again we skip the easy but tedious task to check that these constructions respect the composition of arrows. 
\vskip2mm
The functors Bis and Germ give an equivalence of categories. We have already seen that the twisted Boolean inverse semigroup $(\G,{\mathcal S})$ is isomorphic to the ample inverse semigroup of its twisted groupoid of germs.  On the other hand, let $(G,\Sigma)$ be a twisted Boolean groupoid. Denote by $(\G,{\mathcal S})$ its twisted ample semigroup $({\rm Bis}(G),{\rm Bis}(\Sigma))$. Let $(\underline G,\underline \Sigma)$ be the twisted groupoid of germs of $(\G,{\mathcal S})$. An element of $\underline \Sigma$ is of the form $[S,x]$ where $S\in{\rm Bis}(\Sigma)$ and $x\in{\rm dom}(S)$. The map $f:[S,x]\mapsto Sx$ of $\underline \Sigma$ into $\Sigma$ is a $\t$-equivariant groupoid isomorphism. By construction (see the proof of \propref{isgtwist to gpdtwist}), the family $\{{\underline S}W\}$, where $S\in{\mathcal S}$, $\underline S=\{[S,x], x\in{\rm dom}(S)\}$ and $W$ is an open subset of $\t$, is a base of the topology of $\underline\Sigma$. Let us show that the family $\{SW\}$, where $S\in{\rm Bis}(\Sigma)$ and $W$ is an open subset of $\t$ is a base of topology of $\Sigma$. First, $SW$ is an open subset of $\Sigma$, because it is an open subset of $S\t$, which is itself open in $\Sigma$. Let $\sigma\in\Sigma$ and $U$ open neighborhood of $\sigma$. Let $S'$ be a compact open bisection of $G$ containing $p(\sigma)$ and contained in $p(U)$. As we have seen in the proof of \propref{isgtwist to gpdtwist}, there exists a compact continuous bisection $S$ of $\Sigma$ such that $p(S)=S'$. Then $U$ is an open subset of $S\t$. The map $h:(x,\theta)\mapsto (xS)\theta$ is a homeomorphism of $r(S)\times\t$ onto $S\t$. Let $(x_0,\theta_0)=h^{-1}(\sigma)$. There exists a compact open neighborhood $V$ of $x_0$ and an open neighborhood $W$ of $\theta_0$ such that $V\times W$ is contained in $h^{-1}(U)$. Then $(VS)W$ contains $\sigma$ and is contained in $U$. Since $f(\underline S W)=SW$, $f$ is a homeomorphism.

\end{proof}

\begin{rem}
 Twists over groupoids are a particular case of Fell bundles over groupoids. The relation between Fell bundles over groupoids and Fell bundles over inverse semigroups is studied in \cite{sie:fell} and in \cite{be:fell}.
\end{rem}

\section {Representations of Boolean inverse semigroups}

We first give the definition of a representation of a twisted Boolean inverse semigroup. These representations are called additive in \cite[page 116]{pat:gpd} in the untwisted case.

\begin{defn}
 A representation of a twisted Boolean inverse semigroup $(\G,{\mathcal S})$ with kernel ${\mathcal T}$ and idempotent Boolean algebra $\G^{(0)}=\B(X)$ in a Hilbert space $\mathcal H$ is a map $\pi$ of $\mathcal S$ into the space $B(\mathcal H)$ such that
 
\begin{enumerate}
 \item $\pi(st)=\pi(s)\pi(t)$ for all $s,t\in{\mathcal S}$;
 \item $\pi(s^*)=\pi(s)^*$ for all $s\in{\mathcal S}$;
 \item  $\pi(s\vee t)=\pi(s)+\pi(t)$ for all orthogonal pair $(s,t)$ in $\mathcal S$;
 \item $\pi(0)=0$ and
 \item $\pi$ and $\tilde\pi^{(0)}$, where $\pi^{(0)}$ is the restriction of $\pi$ to $\B(X)$ and $\tilde\pi^{(0)}$ is the extension to $C_0(X)$, agree on the kernel ${\mathcal T}$ of the twist. 
\end{enumerate}
\end{defn}

As expected, representations of $(\G,{\mathcal S})$ correspond bijectively to representations of the $*$-algebra $C_c(G,\Sigma)$, where $(G,\Sigma)$ is the twisted groupoid of germs of $(\G,{\mathcal S})$. This result appears under various guises in the literature  (see for example \cite[Theorem 2.13]{be:fell} and \cite[Theorem 17.13]{ep:char}). We give here an elementary proof. The main point to check is that, given a representation $\pi$ of $(\G,{\mathcal S})$ and $f=\sum_{i\in I}\Delta_{S_i} b_i$ in $C_c(G,\Sigma)$, where the notation is given in section 1.3, the obvious formula $\tilde\pi(f)=\sum_{i\in I}\pi(S_i)\tilde\pi^{(0)}( b_i)$ makes sense. This is done by decomposing the family $(S_i)_{i\in I}$ into a disjoint family. When $G$ is not Hausdorff, this requires to enlarge $\G$.
\vskip2mm

A representation $\tilde\pi^{(0)}$ of $C_0(X)$ can be extended into a representation, still denoted by $\tilde\pi^{(0)}$, of its bidual $C_0(X)''$. In particular, we can extend $\tilde\pi^{(0)}$ to the Boolean algebra ${\mathcal U}(X)$ of universally measurable subsets of $X$ and to the C*-subalgebra $U(X)$ of universally integrable functions on $X$. Given a Boolean groupoid $G$ with unit space $X$, we define the {\it enlarged ample semigroup} $\G_U=\G{\mathcal U}(X)$ as the inverse semigroup of bisections of the form $Se$, where $S$ is a compact open bisection of $G$ and $e\in{\mathcal U}(X)$. It is indeed an inverse semigroup because the partial automorphism $\alpha_S$ of $C_0(X)$ defined by $S$ extends to a partial automorphism of $U(X)$. Given a twisted Boolean groupoid $(G,\Sigma)$ with unit space $X$, we define similarly the inverse semigroup ${\mathcal S}_U={\mathcal S}{\mathcal U}(X)$ whose elements are the bisections of the form $Se$, where $S$ is a compact continuous bisection of $\Sigma$ and $e\in{\mathcal U}(X)$. When $G$ is not Hausdorff, we need to enlarge similarly the $*$-algebra $C_c(G,\Sigma)$. Its elements will still be sections $\underline f$ of the hermitian line bundle $(\Sigma\times{\bf C})/\t$ over $G$, which we may view as complex-valued functions $f$ on $\Sigma$ satisfying $f(\theta\sigma)=\overline\theta f(\sigma)$ for $\theta\in\t$; $f$ and $\underline f$ are related by $\underline f(\sigma')=[\sigma, f(\sigma)]$, where $\sigma'$ is the image of $\sigma$ by the quotient map $p:\Sigma\to G$. Given $S\in {\mathcal S}_u$, we define the section $\underline\Delta_S$ by  $\underline\Delta_S(\sigma')=[Sd(\sigma'),1]$ if $\sigma'\in\dot S$ and $\underline\Delta_S(\sigma')=0$ if $\sigma'\notin\dot S$. Equivalently $\Delta_S(\sigma)=\overline\theta$ if $\sigma=\theta(Sd(\sigma))$, where $\theta\in\t$, and $\Delta_S(\sigma)=0$ if $\sigma$ does not belong to $\t S$. The elements of our enlarged $*$-algebra, denoted by $U(G,\Sigma)=C_c(G,\Sigma)U(X)$, are finite sums  $f=\sum_i\Delta_{S_i} b_i$ where $S_i$ is a compact continuous bisection of $\Sigma$ and $b_i\in U(X)$. The definition of the convolution product and of the involution is the same as in Section 1.3 and one proves similarly that it is a $*$-algebra.

\begin{defn} \label{refine}We say that a finite family $({T_j}')_{j\in J}$ of bisections of a groupoid $G$ {\it refines} another finite family $({S_i}')_{i\in I}$ of bisections if for every $i\in I$, ${S_i}'$ is a disjoint union $\vee_{j\in J(i)}{T_j}'$.
\end{defn}

\begin{lem}\label{atoms}
Let $({S_i}')_{i\in I}$ be a finite family of bisections of a groupoid $G$. We define
\begin{enumerate}
 \item for $\gamma\in G$, $\omega(\gamma)=\{i\in I: \gamma\in {S_i}'\}$;
\item $\Omega=\{\omega(\gamma): \gamma\in G\}\setminus\emptyset$;
 \item for $\omega\in \Omega$, ${T_\omega}'=\{\gamma\in G: \omega(\gamma)=\omega\}$.
\end{enumerate}
Then $({T_\omega}')_{\omega\in\Omega}$ is a disjoint family of bisections which refines the family $({S_i}')_{i\in I}$. Moreover, if $G$ is Boolean and the ${S_i}'$'s are compact open, then the $d({T_\omega}')$'s are locally closed subsets of $G^{(0)}$. Therefore, the ${T_\omega}'$'s belong to $\G_U$.
\end{lem}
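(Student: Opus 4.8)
The plan is to split the statement into its combinatorial part (disjointness, bisection, refinement), which is formal, and its topological part (local closedness, hence membership in $\G_U$), which carries the real content. First I would record that the $T_\omega'$ with $\omega\in\Omega$ are exactly the non-empty fibres of the map $\gamma\mapsto\omega(\gamma)$ over non-empty values. Distinct fibres are disjoint because $\omega(\cdot)$ is a function, so $(T_\omega')_{\omega\in\Omega}$ is a disjoint family whose union is $\bigcup_{i\in I}S_i'$. Since $\omega\in\Omega$ is non-empty, picking any $i_0\in\omega$ gives $T_\omega'\subset S_{i_0}'$; a subset of a bisection is a bisection (the range and domain maps stay injective on a smaller set), so each $T_\omega'$ is a bisection. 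For refinement, for fixed $i$ one has $S_i'=\{\gamma:i\in\omega(\gamma)\}=\bigvee_{\omega\in\Omega,\,i\in\omega}T_\omega'$, and this union is disjoint by the previous point; setting $J(i)=\{\omega\in\Omega:i\in\omega\}$ gives the decomposition required by \defnref{refine}.

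The substantive step is the topological claim, and here the non-Hausdorff nature of $G$ is exactly what prevents us from landing in $\G$ and forces us into $\G_U$. The key identity is
\[T_\omega'=\Big(\bigcap_{i\in\omega}S_i'\Big)\setminus\Big(\bigcup_{i\notin\omega}S_i'\Big),\]
which exhibits $T_\omega'$ as the intersection of an open set (a finite intersection of the open sets $S_i'$) with a closed set (the complement of the finite, hence open, union $\bigcup_{i\notin\omega}S_i'$). Thus $T_\omega'$ is locally closed in $G$, and one can no longer assert that it is open or compact. To transport this to $G^{(0)}$ I would again fix $i_0\in\omega$ and use that $d$ restricts to a homeomorphism of the compact open bisection $S_{i_0}'$ onto the compact open set $d(S_{i_0}')$. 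Intersecting the displayed identity with the open set $S_{i_0}'$ shows $T_\omega'$ is locally closed in $S_{i_0}'$; transporting along the homeomorphism, $d(T_\omega')$ is locally closed in $d(S_{i_0}')$; and since $d(S_{i_0}')$ is open in $G^{(0)}$, a locally closed subset of it is locally closed in $G^{(0)}$. I expect this local-closedness argument to be the only real obstacle; everything else is bookkeeping.

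Finally I would deduce membership in $\G_U$. A locally closed set is Borel, hence $e:=d(T_\omega')\in\mathcal U(X)$. Keeping $i_0\in\omega$ fixed, injectivity of $d$ on $S_{i_0}'$ gives $T_\omega'=S_{i_0}'e=\{\gamma\in S_{i_0}':d(\gamma)\in e\}$: the inclusion from left to right is clear, and if $\gamma\in S_{i_0}'$ with $d(\gamma)\in e$ then $d(\gamma)=d(\gamma')$ for some $\gamma'\in T_\omega'\subset S_{i_0}'$, whence $\gamma=\gamma'\in T_\omega'$ by injectivity of $d$ on $S_{i_0}'$. Since $S_{i_0}'$ is a compact open bisection and $e\in\mathcal U(X)$, this displays $T_\omega'$ as an element of the enlarged ample semigroup $\G_U=\G\,\mathcal U(X)$, as claimed.
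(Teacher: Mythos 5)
Your proof is correct and follows essentially the same route as the paper: the paper's own (very terse) proof rests on exactly your key identity ${T_\omega}'=\bigcap_{i\in\omega}{S_i}'\setminus\bigcup_{i\notin\omega}{S_i}'$, transports it to $G^{(0)}$ via the homeomorphism $d_{|{S_i}'}$ to get local closedness, and invokes universal measurability of locally closed sets, leaving the combinatorial part to the reader. You have merely supplied the bookkeeping (disjointness, refinement, and the verification that ${T_\omega}'=S_{i_0}'\,d({T_\omega}')$) that the paper omits.
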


\begin{proof} Left to the reader.
 Note that ${T_\omega}'=\cap_{i\in\omega} {S_i}'\setminus \cup_{i\notin\omega} {S_i}'$ is a locally closed subset of $S_i$. Therefore $d(T_\omega')$, which is its image by the homeomorphism $d_{|S_i}$ is a locally closed subset of the open set $d(S_i)$, hence it is a locally closed subset of $G^{(0)}$. Moreover, locally closed subsets of a locally compact Hausdorff space are universally measurable.
 \end{proof}

\begin{lem}\label{2} Let $(G,\Sigma)$ be a twisted Boolean groupoid with unit space $X$. Then
every $f\in U(G,\Sigma)$ can be written $f=\sum_{j\in J}\Delta_{T_j}c_j$, where $({T_j}')_{j\in J}$ is a disjoint family in $\G_U$ and $c_j\in U(X)$. Moreover, if $f$ has another expression $f=\sum_{j\in J}\Delta_{T_j}d_j$, then $c_j$ and $d_j$ agree on $d(T_j)$.
\end{lem}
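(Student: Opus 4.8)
The plan is to reduce an arbitrary $f=\sum_{i\in I}\Delta_{S_i}b_i$ in $U(G,\Sigma)$ to the asserted disjoint form by applying the atomic refinement of \lemref{atoms} to the images $S_i'=p(S_i)$ in $G$, and then to absorb the twist by comparing the various lifts of each refining bisection. First I would set $S_i'=p(S_i)$, a compact open bisection of $G$, and apply \lemref{atoms} to the finite family $(S_i')_{i\in I}$, obtaining a disjoint family $(T_\omega')_{\omega\in\Omega}$ in $\G_U$ that refines it, so that $S_i'=\bigvee_{\omega\ni i}T_\omega'$ for every $i$. Since \lemref{atoms} guarantees that each $d(T_\omega')$ is locally closed, hence universally measurable, the characteristic function $e_\omega=\mathbf 1_{d(T_\omega')}$ lies in $\mathcal U(X)$.

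Next, for each $\omega$ I would fix an index $i_0(\omega)\in\omega$ and define the lift $T_\omega=S_{i_0(\omega)}e_\omega\in{\mathcal S}_U$, so that $p(T_\omega)=T_\omega'$. The workhorse is the restriction identity $\Delta_{S_i}=\sum_{\omega\ni i}\Delta_{S_ie_\omega}$, which follows from the additivity of $S\mapsto\Delta_S$ over disjoint joins (\lemref{map u}) together with the partition $S_i'=\bigvee_{\omega\ni i}T_\omega'$. For a fixed $\omega$ and $i\in\omega$, both $S_ie_\omega$ and $T_\omega$ are lifts of the same bisection $T_\omega'$ of $G$, hence differ by a phase: there is $g_{i,\omega}\in U(X)$, of modulus one on $d(T_\omega')$ and zero elsewhere, with $\Delta_{S_ie_\omega}=\Delta_{T_\omega}g_{i,\omega}$. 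I would verify that $g_{i,\omega}$ is universally measurable by noting that it is the restriction to the universally measurable set $d(T_\omega')$ of the continuous phase comparing the continuous sections $S_i$ and $S_{i_0(\omega)}$ over the open set $p(S_i)\cap p(S_{i_0(\omega)})$. Substituting and regrouping then gives
\[
f=\sum_{i\in I}\Delta_{S_i}b_i=\sum_{\omega\in\Omega}\Delta_{T_\omega}\Big(\sum_{i\in\omega}g_{i,\omega}\,b_i\Big)=\sum_{\omega\in\Omega}\Delta_{T_\omega}c_\omega ,
\]
with $c_\omega=\sum_{i\in\omega}g_{i,\omega}b_i\in U(X)$ and $(T_\omega')_{\omega\in\Omega}$ disjoint in $\G_U$, which is the required expression.

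For the uniqueness clause, suppose $\sum_{j\in J}\Delta_{T_j}c_j=\sum_{j\in J}\Delta_{T_j}d_j$ with $(T_j')_{j\in J}$ disjoint. Passing the difference to one side gives $\sum_{j}\Delta_{T_j}(c_j-d_j)=0$. Since the $T_j'$ are pairwise disjoint in $G$, the orbits $\t T_j$ are pairwise disjoint in $\Sigma$, so the functions $\Delta_{T_j}(c_j-d_j)$ have pairwise disjoint supports and each must vanish identically. As $|\Delta_{T_j}|=1$ on $\t T_j$, which projects onto $T_j'$, this forces $(c_j-d_j)\circ d=0$ on $T_j'$, that is, $c_j=d_j$ on $d(T_j)$, as claimed.

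The main obstacle I expect is the twist bookkeeping in the existence part: ensuring the comparison phases $g_{i,\omega}$ are well defined and universally measurable, so that every coefficient stays in $U(X)$ and every bisection in $\G_U$ after refinement and the regrouped sum genuinely lies in the enlarged $*$-algebra $U(G,\Sigma)$. By contrast, the atomic refinement and the disjoint-support cancellation are routine once the phase identity $\Delta_{S_ie_\omega}=\Delta_{T_\omega}g_{i,\omega}$ is in hand.
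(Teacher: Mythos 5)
Your proof is correct and follows essentially the same route as the paper: apply \lemref{atoms} to the family $({S_i}')_{i\in I}$, pick one lift per atom, and regroup, the only cosmetic difference being that the paper compresses your explicit phase functions $g_{i,\omega}$ by defining the new coefficient directly as $b_\omega(x)=f(S_{i(\omega)}x)$, which encodes exactly the sum $\sum_{i\in\omega}g_{i,\omega}b_i$. Your disjoint-support cancellation for uniqueness is likewise the paper's evaluation argument $f(T_jx)=\Delta_{T_j}(T_jx)c_j(x)=c_j(x)$ in different words.
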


\begin{proof} Let $f=\sum_{i\in I}\Delta_{S_i} b_i$ be an arbitrary expression of $f\in M_c(G,\Sigma)$. We apply \lemref{atoms} to the finite family $({S_i}')_{i\in I}$. We obtain the finite disjoint family $({T_\omega}')_{\omega\in \Omega}$ in $\G_u$. We pick for each $\omega\in\Omega$ an index $i(\omega)\in I$ such that $i(\omega)\in\omega$. Then, we can write $f=\sum_{\omega\in\Omega}\Delta_{S_{i(\omega)}}b_\omega$, where $b_\omega(x)=f(S_{i(\omega)}x)$ for $x\in d(S_\omega)$ and $b_\omega(x)=0$ otherwise. Thus, we have the existence of a disjoint expression of $f$. If $f=\sum_{j\in J}\Delta_{T_j}c_j$ where $({T_j}')_{j\in J}$ is a disjoint family in $\G_u$, then for all $j\in J$ and all $x\in d(T_j)$, $f(T_jx)=\Delta_{T_j}(T_jx)c_j(x)=c_j(x)$. This gives the required uniqueness.
 
\end{proof}

\begin{lem}\label{1} Let $(G,\Sigma)$ be a twisted Boolean groupoid. Let $f=\sum_{i\in I}\Delta_{S_i} b_i=\sum_{j\in J}\Delta_{T_j} c_j$ be two expressions of $f\in U(G,\Sigma)$. Assume that the family $({T_j}')_{j\in J}$ of $\G''$ is disjoint and refines the family $({S_i}')_{i\in I}$. Then we have the equality
\[\sum_{i\in I}\pi(S_i)\tilde\pi^{(0)}( b_i)=\sum_{j\in J}\pi(T_j) \tilde\pi^{(0)}(c_j)\]
for every representation $\pi$ of $(G,\Sigma)$.
 \end{lem}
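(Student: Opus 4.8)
The plan is to use the refinement hypothesis to rewrite the first expression term-by-term against the second family, tracking the discrepancy by trivial Clifford (twist) elements, and then to push everything through the representation using its multiplicativity, the module property, and condition (v) on the kernel. All computations take place in the enlarged inverse semigroup ${\mathcal S}_U={\mathcal S}{\mathcal U}(X)$ and the enlarged algebra $U(G,\Sigma)$, with $\tilde\pi^{(0)}$ and $\pi$ extended there.

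First I would fix $i\in I$. By \defnref{refine} the image $S_i'=p(S_i)$ is the orthogonal join $\bigvee_{j\in J(i)}T_j'$, so the idempotents $d(T_j)$, $j\in J(i)$, are pairwise disjoint with $\sum_{j\in J(i)}{\bf 1}_{d(T_j)}={\bf 1}_{d(S_i)}$. For $j\in J(i)$ the two continuous bisections $S_id(T_j)$ and $T_j$ of $\Sigma$ have the same image $T_j'$ in $G$ and the same domain $d(T_j)$; since the kernel of the twist is the (enlarged) trivial Clifford inverse semigroup, they differ by a unique $a_{ij}\in{\mathcal T}(X){\mathcal U}(X)$ with $|a_{ij}|={\bf 1}_{d(T_j)}$, that is $S_id(T_j)=T_ja_{ij}$ in ${\mathcal S}_U$. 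Using ${\bf 1}_{d(S_i)}b_i$ in place of $b_i$ against $\Delta_{S_i}$ and then \lemref{map u}, this gives $\Delta_{S_i}b_i=\sum_{j\in J(i)}\Delta_{S_id(T_j)}b_i=\sum_{j\in J(i)}\Delta_{T_j}(a_{ij}b_i)$.

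Summing over $i$ and exchanging the order of summation yields $f=\sum_j\Delta_{T_j}\big(\sum_{i\,:\,j\in J(i)}a_{ij}b_i\big)$. Comparing with $f=\sum_j\Delta_{T_j}c_j$ and invoking the uniqueness part of \lemref{2} for the disjoint family $(T_j')$, I obtain $c_j=\sum_{i\,:\,j\in J(i)}a_{ij}b_i$ on $d(T_j)$ (the finitely many $j$ lying in no $J(i)$ have $c_j=0$ there, so those terms drop). Now I apply $\pi$: by linearity of $\tilde\pi^{(0)}$, for each $j$ I have $\pi(T_j)\tilde\pi^{(0)}(c_j)=\sum_{i\,:\,j\in J(i)}\pi(T_j)\tilde\pi^{(0)}(a_{ij})\tilde\pi^{(0)}(b_i)$; condition (v) gives $\tilde\pi^{(0)}(a_{ij})=\pi(a_{ij})$, so by multiplicativity $\pi(T_j)\tilde\pi^{(0)}(a_{ij})=\pi(T_ja_{ij})=\pi(S_id(T_j))=\pi(S_i)\tilde\pi^{(0)}({\bf 1}_{d(T_j)})$. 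Summing over $j$, exchanging orders again, and using $\sum_{j\in J(i)}\tilde\pi^{(0)}({\bf 1}_{d(T_j)})=\tilde\pi^{(0)}({\bf 1}_{d(S_i)})$ together with $\pi(S_i)\tilde\pi^{(0)}({\bf 1}_{d(S_i)})=\pi(S_i)$, I conclude $\sum_j\pi(T_j)\tilde\pi^{(0)}(c_j)=\sum_i\pi(S_i)\tilde\pi^{(0)}(b_i)$.

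The main obstacle is the twist bookkeeping in the enlarged setting: one must verify that the comparison element $a_{ij}$ genuinely lies in the enlarged kernel ${\mathcal T}(X){\mathcal U}(X)$ (its domain $d(T_j)$ is only locally closed, hence merely universally measurable), and that $\pi$, the extension $\tilde\pi^{(0)}$ to $U(X)$, and condition (v) all pass to ${\mathcal S}_U$ so that the identities $\pi(T_ja_{ij})=\pi(S_id(T_j))$ and $\tilde\pi^{(0)}(a_{ij})=\pi(a_{ij})$ are legitimate. Everything else — the two reorderings of finite sums and the collapse of $\sum_{j\in J(i)}{\bf 1}_{d(T_j)}$ to ${\bf 1}_{d(S_i)}$ — is routine once one records the disjointness of the $d(T_j)$ within each $J(i)$.
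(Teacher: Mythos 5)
Your proof is correct and takes essentially the same route as the paper's: the paper likewise produces twist elements $t_{(i,j)}$ in the kernel with $S_i=\vee_{j\in J(i)}T_jt_{(i,j)}$ (your $a_{ij}$, stated locally as $S_id(T_j)=T_ja_{ij}$), so that $\Delta_{S_i}=\sum_{j\in J(i)}\Delta_{T_j}t_{(i,j)}$, then rearranges the double sum, identifies $c_j$ with $\sum_{i:j\in J(i)}t_{(i,j)}b_i$ on $d(T_j)$ by the uniqueness of \lemref{2}, and pushes everything through $\pi$ using multiplicativity and condition (v), exactly as you do. The only cosmetic differences are that the paper invokes additivity of $\pi$ on the orthogonal join to get $\pi(S_i)=\sum_{j\in J(i)}\pi(T_j)\tilde\pi^{(0)}(t_{(i,j)})$ where you instead collapse $\sum_{j\in J(i)}{\bf 1}_{d(T_j)}={\bf 1}_{d(S_i)}$ by linearity of $\tilde\pi^{(0)}$, and that it runs the final chain of equalities from the $S$-side to the $T$-side rather than in reverse; the enlarged-semigroup bookkeeping you flag is treated at the same level of detail in the paper.
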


\begin{proof} Let $\mathcal T$ be the kernel of the quotient map $p:{\mathcal S}\to\G$ of the twisted ample semigroup of $(G,\Sigma)$. By assumption, for every $i\in I$, there exists $J(i)\subset J$ such that
${S_i}'=\vee_{j\in J(i)}{T_j}'$. Then $p(S_i d(T_j))=p(T_j)$. Therefore, there exist $t_{(i,j)}\in{\mathcal T}$ such that $S_i=\vee_{j\in J(i)}T_jt_{(i,j)}$. This implies that $\Delta_{S_i}=\sum_{j\in J(i)}\Delta_{T_j}t_{(i,j)}$. Therefore, 
\begin{align*}
\sum_{i\in I}\Delta_{S_i} b_i=&\sum_{i\in I}\big(\sum_{j\in J(i)}\Delta_{T_j}t_{(i,j)}\big) b_i\cr
=&\sum_{(i,j)\in K}\Delta_{T_j}t_{(i,j)} b_i\cr
=&\sum_{j\in J}\Delta_{T_j}\big(\sum_{i:j\in J(i)}t_{(i,j)} b_i\big).\cr
\end{align*}
Since the family $({T_j}')_{j\in J}$ is disjoint, the uniqueness in \lemref{2} says that $c_j$ and $\sum_{i:j\in J(i)}t_{(i,j)} b_i$ agree on $d(T_j)$. On the other hand, we have $\pi(S_i)=\sum_{j\in J(i)}\pi(T_j)\tilde\pi^{(0)}(t_{(i,j)})$. Therefore,
\begin{align*}
\sum_{i\in I}\pi(S_i)\tilde\pi^{(0)}( b_i)=&\sum_{i\in I}\big(\sum_{j\in J(i)}\pi(T_j)\tilde\pi^{(0)}(t_{(i,j)})\big)\tilde\pi^{(0)}( b_i)\cr
=&\sum_{(i,j)\in K}\pi(T_j)\tilde\pi^{(0)}(t_{(i,j)})\tilde\pi^{(0)}( b_i)\cr
=&\sum_{(i,j)\in K}\pi(T_j)\tilde\pi^{(0)}(t_{(i,j)} b_i)\cr
=&\sum_{j\in J}\pi(T_j)\tilde\pi^{(0)}(\sum_{i:j\in J(i)}t_{(i,j)} b_i)\cr
=&\sum_{j\in J}\pi(T_j)\tilde\pi^{(0)}(c_j).\cr
\end{align*}
\end{proof}

\begin{cor}\label{welldefined} Let $(G,\Sigma)$ be a twisted Boolean groupoid. Let $f=\sum_{i\in I}\Delta_{S_i} b_i=\sum_{j\in J}\Delta_{T_j} c_j$ be two expressions of $f\in U(G,\Sigma)$. Then we have the equality
\[\sum_{i\in I}\pi(S_i)\tilde\pi^{(0)}( b_i)=\sum_{j\in J}\pi(T_j) \tilde\pi^{(0)}(c_j)\]
for every representation $\pi$ of $(G,\Sigma)$.
 \end{cor}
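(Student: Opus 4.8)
The plan is to deduce \corref{welldefined} from \lemref{1} by passing to a common refinement of the two given expressions of $f$. Recall that \lemref{1} already establishes the desired equality whenever one of the two families is disjoint and refines the other; so it suffices to produce a single disjoint family in $\G_U$ that simultaneously refines $({S_i}')_{i\in I}$ and $({T_j}')_{j\in J}$, and then compare each of the two original expressions to it.

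First I would build the common refinement. Applying \lemref{atoms} to the combined finite family obtained by listing all of the ${S_i}'$ and all of the ${T_j}'$ together (over the disjoint union of index sets) produces a disjoint family $({R_k}')_{k\in K}$ in $\G_U$ that refines this combined family, and therefore refines both $({S_i}')_{i\in I}$ and $({T_j}')_{j\in J}$ separately. Since each ${S_i}'$ (resp. each ${T_j}'$) is the disjoint union of the atoms ${R_k}'$ it contains, each $\Delta_{S_i}$ (resp. $\Delta_{T_j}$) rewrites as a sum of the $\Delta_{R_k}$ over those atoms, exactly as in the computation in the proof of \lemref{1}. Hence $f$ admits a disjoint expression $f=\sum_{k\in K}\Delta_{R_k} d_k$ over this refined family. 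The crucial point is that, by the uniqueness part of \lemref{2}, the coefficients $d_k$ are determined on $d(R_k)$ by $f$ alone (namely $d_k(x)=f(R_k x)$), and so do not depend on which of the two original expressions one starts from.

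With the refinement in hand, I would apply \lemref{1} twice. Since $({R_k}')_{k\in K}$ is disjoint and refines $({S_i}')_{i\in I}$, and both $\sum_{i}\Delta_{S_i} b_i$ and $\sum_{k}\Delta_{R_k} d_k$ are expressions of $f$, \lemref{1} gives
\[\sum_{i\in I}\pi(S_i)\tilde\pi^{(0)}( b_i)=\sum_{k\in K}\pi(R_k)\tilde\pi^{(0)}(d_k).\]
Applying \lemref{1} to the pair $\sum_{j}\Delta_{T_j} c_j$ and $\sum_{k}\Delta_{R_k} d_k$ (using that $({R_k}')$ is disjoint and refines $({T_j}')$) gives likewise
\[\sum_{j\in J}\pi(T_j)\tilde\pi^{(0)}( c_j)=\sum_{k\in K}\pi(R_k)\tilde\pi^{(0)}(d_k).\]
Because the coefficients $d_k$ are the same in both right-hand sides, by the uniqueness in \lemref{2}, the two right-hand sides coincide, and the asserted equality follows.

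I do not expect a genuine obstacle here: the only real content is the construction of the common refinement and the verification that it lies in $\G_U$ and expresses $f$, and both of these are already delivered by \lemref{atoms} and \lemref{2}. The corollary is thus essentially a bookkeeping reduction of the arbitrary case to the refining case handled in \lemref{1}.
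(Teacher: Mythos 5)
Your proposal is correct and follows essentially the same route as the paper: apply \lemref{atoms} to the combined family to obtain a disjoint common refinement $({R_k}')_{k\in K}$, express $f$ over it as in \lemref{2}, and apply \lemref{1} to each of the two original expressions against this refinement. Your explicit remark that the uniqueness in \lemref{2} forces the coefficients $d_k$ to agree for both applications is a point the paper leaves implicit, but the argument is the same.
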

 
 \begin{proof}
 Applying \lemref{atoms} to the union of $({S_i}')_{i\in I}$ and $({T_j}')_{j\in J}$, we obtain a disjoint family $( R'_k)_{k\in K}$ which refines both $({S_i}')_{i\in I}$ and $({T_j}')_{j\in J}$. Then, as in the proof of \lemref{2},  we can write $f$ under the form $f=\sum_{k\in K}\Delta_{R_k}d_k$. According to \lemref{1},
 \[\sum_{i\in I}\pi(S_i)\tilde\pi^{(0)}( b_i)=\sum_{k\in K}\pi(R_k) \tilde\pi^{(0)}(d_k)=\sum_{j\in J}\pi(T_j) \tilde\pi^{(0)}(c_j).\]
\end{proof}

\begin{thm}\label{twisted rep}
 Let $(\G,{\mathcal S})$ be a twisted Boolean inverse semigroup and let $(G,\Sigma)$ be its groupoid of germs. Then there is a one-to-one correspondence between the representations $\pi$ of $(\G,{\mathcal S})$  and the representations $\tilde\pi$ of $C_c(G,\Sigma)$ such that for $S\in{\mathcal S}$, $\pi(S)=\tilde\pi(\Delta_S)$.
\end{thm}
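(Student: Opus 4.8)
The plan is to produce the correspondence in both directions and verify that they are mutually inverse, pinned by the prescribed formula $\pi(S)=\tilde\pi(\Delta_S)$. The easy direction starts from a representation $\tilde\pi$ of $C_c(G,\Sigma)$ and sets $\pi(S)=\tilde\pi(\Delta_S)$ for $S\in{\mathcal S}$; I would then read off the five axioms of a representation of $(\G,{\mathcal S})$ directly from \lemref{map u}. Multiplicativity and the $*$-property are immediate from $\Delta_S*\Delta_T=\Delta_{ST}$ and $\Delta_S^*=\Delta_{S^{-1}}$; additivity on an orthogonal pair follows because disjoint bisections give $\Delta_{S\vee T}=\Delta_S+\Delta_T$; the normalization $\pi(0)=0$ is clear; and compatibility with the kernel ${\mathcal T}$ is the last assertion of \lemref{map u}, namely $\Delta_S=\tilde b$ when $S\in{\mathcal T}(G^{(0)})$ is given by $b$.

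For the substantive direction I start from a representation $\pi$ of $(\G,{\mathcal S})$, write an arbitrary $f\in C_c(G,\Sigma)$ as a finite sum $f=\sum_i\Delta_{S_i}b_i$ with $S_i\in{\mathcal S}$ and $b_i\in C_c(G^{(0)})$, and define $\tilde\pi(f)=\sum_i\pi(S_i)\tilde\pi^{(0)}(b_i)$. The crux of the whole argument is that this is independent of the chosen expression of $f$, and this is precisely \corref{welldefined}, whose proof rests on decomposing any finite family of bisections into a refining disjoint family (\lemref{atoms}, \lemref{2}, \lemref{1}). Here the non-Hausdorff case forces the decomposition to take place inside the enlarged semigroup $\G_U$ and the enlarged algebra $U(G,\Sigma)$, using locally closed (hence universally measurable) idempotents; accordingly I would first extend $\pi$ to $\G_U$ by $\pi(Se)=\pi(S)\tilde\pi^{(0)}(e)$, where $\tilde\pi^{(0)}$ is extended to ${\mathcal U}(X)$ and $U(X)$ through the bidual, and only afterwards restrict attention to $C_c(G,\Sigma)$. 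Linearity of $\tilde\pi$ is then immediate from well-definedness.

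It remains to check that $\tilde\pi$ is a $*$-homomorphism. The key identity I would isolate is the intertwining relation $\tilde\pi^{(0)}(b)\,\pi(S)=\pi(S)\,\tilde\pi^{(0)}(b\circ\varphi_S)$, which comes from the inverse-semigroup relation $eS=S\,(e\circ\varphi_S)$ for an idempotent $e$ (the characteristic function of a compact open set) and then extends to all $b$ by linearity and continuity of $\tilde\pi^{(0)}$. Combined with the algebra identities $\Delta_Sb=\Delta_S*\tilde b$, $\;\tilde b*\Delta_T=\Delta_T*\widetilde{b\circ\varphi_T}$, and $\Delta_S*\Delta_T=\Delta_{ST}$ from \lemref{map u}, a one-line computation on single terms $\Delta_Sb$ and $\Delta_Tc$ gives $(\Delta_Sb)*(\Delta_Tc)=\Delta_{ST}\,((b\circ\varphi_T)c)$ and hence $\tilde\pi(f*g)=\tilde\pi(f)\tilde\pi(g)$; the same intertwining relation, applied to $(\Delta_Sb)^*=\widetilde{\overline b}*\Delta_{S^{-1}}$, yields $\tilde\pi(f^*)=\tilde\pi(f)^*$.

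Finally I would confirm that the two constructions are inverse to one another and satisfy $\pi(S)=\tilde\pi(\Delta_S)$. Starting from $\tilde\pi$, re-expanding gives back $\tilde\pi$ since $\tilde\pi(\Delta_Sb)=\tilde\pi(\Delta_S*\tilde b)=\tilde\pi(\Delta_S)\tilde\pi(\tilde b)$ by multiplicativity and property (v); starting from $\pi$, the expression $\Delta_S=\Delta_S\,{\bf 1}_{d(S)}$ gives $\tilde\pi(\Delta_S)=\pi(S)\,\tilde\pi^{(0)}({\bf 1}_{d(S)})=\pi(S)\,\pi(d(S))=\pi(S\,d(S))=\pi(S)$. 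The main obstacle is the well-definedness step, that is \corref{welldefined} and the refinement lemmas underlying it, together with the care required in the non-Hausdorff case to carry out the disjointification inside $U(G,\Sigma)$; once that is secured, the remaining algebraic verifications are routine.
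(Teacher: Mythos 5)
Your proposal is correct and follows essentially the same route as the paper: the easy direction via \lemref{map u}, the definition $\tilde\pi(f)=\sum_i\pi(S_i)\tilde\pi^{(0)}(b_i)$ with well-definedness secured by \corref{welldefined} and the refinement lemmas (including the passage to $\G_U$ and $U(G,\Sigma)$ in the non-Hausdorff case), and the homomorphism property reduced to the intertwining identity $\tilde\pi^{(0)}(b)\,\pi(T)=\pi(T)\,\tilde\pi^{(0)}(\alpha_{T^{-1}}(b))$ checked on idempotents. The only difference is that you spell out the mutual-inverse verification, which the paper leaves implicit; this is a welcome but routine addition.
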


\begin{proof} We identify ${\mathcal S}={\rm Bis}(\Sigma)$ and $\G={\rm Bis}(G)$. Let $\tilde\pi$ be a representation of $C_c(G,\Sigma)$. According to \lemref{map u}, $\pi(S)=\tilde\pi(\Delta_S)$ defines a representation of the twisted Boolean inverse semigroup $(\G,{\mathcal S})$. Conversely, let $\pi$ be a representation of the twisted Boolean inverse semigroup $(\G,{\mathcal S})$. We extend $\pi^{(0)}$ to $C_0(X)$ (and to its bidual $C_0(X)''$), where $X$ is the spectrum of the Boolean algebra $\G^{(0)}$. Every $f\in C_c(G,\Sigma)$ can be written under the form $f=\sum_{i=1}^m \Delta_{S_i}b_i$ where $S_i\in{\mathcal S}$ and $b_i\in C_c(d(S_i))$. According to \corref{welldefined}, $\tilde\pi(f)=\sum_{i=1}^m \pi(S_i)\tilde\pi^{(0)}(b_i)$ is well defined. It is then straightforward to check that $\tilde\pi$ is a $*$-representation of $C_c(G,\Sigma)$. Indeed $f=\sum_{i=1}^m \Delta_{S_i}b_i$ and $g=\sum_{j=1}^n \Delta_{T_j}c_j$, then $f*g=\sum_{(i,j)\in K}\Delta_{S_iT_j}d_{(i,j)}$ where $K=\{1,\ldots,m\}\times\{1,\ldots,n\}$ and $d_{(i,j)}=\alpha_{T_j^{-1}}(b_i)c_j$ with $\alpha_{T^{-1}}(e)=T^{-1}eT$ for $e\in\B(X)$. The equality $\tilde\pi(f*g)=\tilde\pi(f)\tilde\pi(g)$ results from the equality
$\tilde\pi^{(0)}(b)\pi(T)=\pi(T)\tilde\pi^{(0)}(\alpha_{T^{-1}}(b))$
which suffices to check when $b=e\in\B(X)$. Then, we have indeed
\[\pi(T)\tilde\pi^{(0)}(\alpha_{T^{-1}}(e))=\pi(T)\pi(T^{-1}eT)=\pi(e)\pi(T)=\tilde\pi^{(0)}(e)\pi(T).\]
The equality $\tilde\pi(f^*)=\tilde\pi(f)^*$ results from the same equality.
 \end{proof}

\end{document}